\documentclass[smallextended,final,numbook]{svjour3}     
\usepackage[backref,colorlinks=true, linkcolor=blue, citecolor=blue, urlcolor=blue, pdfborder={0 0 0}]{hyperref}
\usepackage[letterpaper,top=2in, bottom=1.5in, left=1in, right=1in]{geometry}


\smartqed  

\usepackage{tikz}
\usetikzlibrary{fit}\usetikzlibrary{arrows,shapes,positioning}
\usetikzlibrary{decorations.markings}
\tikzstyle arrowstyle=[scale=1]
\tikzstyle directed=[postaction={decorate,decoration={markings,
    mark=at position .5 with {\arrow[arrowstyle]{stealth}}}}]
\tikzstyle reverse directed=[postaction={decorate,decoration={markings,
    mark=at position .5 with {\arrowreversed[arrowstyle]{stealth};}}}]

\spnewtheorem{thm}{Theorem}[section]{\bf}{\it}
\spnewtheorem{cor}[thm]{Corollary}{\bf}{\it}
\spnewtheorem{lem}[thm]{Lemma}{\bf}{\it}
\spnewtheorem{prop}[proposition]{Proposition}{\bf}{\it}
\spnewtheorem{defi}[thm]{Definition}{\bf}{\it}
\spnewtheorem{que}[thm]{Question}{\bf}{\it}
\spnewtheorem{rem}[thm]{Remark}{\bf}{\it}
\spnewtheorem{prob}[thm]{Problem}{\bf}{\it}
\spnewtheorem{fact}[thm]{Fact}{\bf}{\it}
\spnewtheorem{assump}{Assumption}{\bf}{\it}
\usepackage[normalem]{ulem}

\usepackage{mathtools}



\usepackage[normalem]{ulem}


\newcommand{\vx}{{\mathbf{x}}}

\newcommand{\vz}{{\mathbf{z}}}

\newcommand{\vR}{{\mathbf{R}}}

\newcommand{\cC}{{\mathcal{C}}}

\newcommand{\cG}{{\mathcal{G}}}
\newcommand{\cH}{{\mathcal{H}}}

\newcommand{\cK}{{\mathcal{K}}}
\newcommand{\cL}{{\mathcal{L}}}

\newcommand{\cR}{{\mathcal{R}}}


\newcommand{\St}{{\mathrm{subject~to}}} 
\newcommand{\dom}{{\mathrm{dom}}} 
\newcommand{\range}{{\mathrm{range}}} 

\newcommand{\prox}{\mathbf{prox}}

\newcommand{\tnabla}{\widetilde{\nabla}}

\newcommand{\TPRS}{T_{\mathrm{PRS}}}
\newcommand{\best}{\mathrm{best}}
\newcommand{\kbest}{k_{\best}}

\DeclareMathOperator*{\argmin}{arg\,min}

\DeclareMathOperator*{\Min}{minimize}
\DeclareMathOperator*{\Max}{maximize}

\DeclareMathOperator*{\zer}{zer}


\newcommand{\bc}{\begin{center}}
\newcommand{\ec}{\end{center}}

\newcommand{\bdm}{\begin{displaymath}}
\newcommand{\edm}{\end{displaymath}}

\newcommand{\beq}{\begin{equation}}
\newcommand{\eeq}{\end{equation}}

\newcommand{\bfl}{\begin{flushleft}}
\newcommand{\efl}{\end{flushleft}}

\newcommand{\bt}{\begin{tabbing}}
\newcommand{\et}{\end{tabbing}}

\newcommand{\beqn}{\begin{align}}
\newcommand{\eeqn}{\end{align}}

\newcommand{\beqs}{\begin{align*}} 
\newcommand{\eeqs}{\end{align*}}  




\usepackage{multirow} 
\usepackage{arydshln} 

\usepackage{mathtools}
\usepackage{algorithm}
\usepackage{algpseudocode}

\newcommand\numberthis{\addtocounter{equation}{1}\tag{\theequation}}

\DeclarePairedDelimiter{\dotp}{\langle}{\rangle}
 \DeclarePairedDelimiter{\ceil}{\lceil}{\rceil}

\newcommand{\refl}{\mathbf{refl}}

\usepackage{booktabs}
\usepackage{multirow} 
\usepackage{arydshln} 
\def\cut#1{{}}

\begin{document}

\title{Faster convergence rates of relaxed Peaceman-Rachford and ADMM under regularity assumptions}
\author{Damek Davis   \and
        Wotao Yin}

\institute{D. Davis \and W. Yin\at
              Department of Mathematics, University of California, Los Angeles\\
              Los Angeles, CA 90025, USA\\
              \email{damek / wotaoyin@ucla.edu}           
}

\date{Received: date / Accepted: date}

\maketitle

\abstract{Splitting schemes are a class of powerful algorithms that solve complicated monotone inclusion and convex optimization problems that are built from many simpler pieces. They give rise to algorithms in which the simple pieces of the decomposition are processed individually. This leads to easily implementable and highly parallelizable algorithms, which often obtain nearly state-of-the-art performance.

In this paper, we provide a comprehensive convergence rate analysis of the Douglas-Rachford splitting (DRS), Peaceman-Rachford splitting (PRS), and alternating direction method of multipliers (ADMM) algorithms under various regularity assumptions including strong convexity, Lipschitz differentiability, and bounded linear regularity.  The main consequence of this work is that relaxed PRS and ADMM automatically adapt to the regularity of the problem and achieve convergence rates that improve upon the (tight) worst-case rates that hold in the absence of such regularity. All of the results are obtained using simple techniques.}

\keywords{Douglas-Rachford Splitting \and Peaceman-Rachford Splitting \and Alternating Direction Method of Multipliers \and nonexpansive operator \and averaged operator \and fixed-point algorithm}
\subclass{47H05 \and 65K05 \and 65K15 \and 90C25}

\section{Introduction}

The Douglas-Rachford splitting (DRS), Peaceman-Rachford splitting (PRS), and alternating direction method of multipliers (ADMM) algorithms are abstract splitting schemes that solve monotone inclusion and convex optimization problems \cite{lions1979splitting,GlowinskiADMM,gabay1976dual}.  The DRS and PRS algorithms solve monotone inclusion problems in which the operator is the sum of two (possibly) simpler operators by accessing each operator individually through its resolvent.  The ADMM algorithm solves convex optimization problems in which the objective is the sum of two (possibly) simpler functions with variables linked through a linear constraint via an alternating minimization strategy.  The variable splitting that occurs in each of these algorithms can give rise to parallel and even distributed implementations of minimization algorithms \cite{boyd2011distributed,shi2013linear,wei2012distributed}, which are particularly suitable for large-scale applications.
Since the 1950s, these methods were largely applied to solving partial differential equations (PDEs) and feasibility problems, and only recently has their power been utilized in (PDE and non-PDE related) image processing, statistical and machine learning, compressive sensing, matrix completion,  finance, and control \cite{goldstein2009split,boyd2011distributed}.

In this paper, we consider two prototype optimization problems: the unconstrained problem
\begin{align}\label{eq:simplesplit}
\Min_{x\in \cH}~ f(x) + g(x)
\end{align}
where $\cH$ is a Hilbert space, and the linearly constrained variant
\begin{align*}
\Min_{x \in \cH_1,~ y \in \cH_2} & \; f(x) + g(y) \\
\St~ &  \; Ax + By = b \numberthis \label{eq:simplelinearconstrained}
\end{align*}
where $\cH_1, \cH_2$, and $\cG$ are Hilbert spaces, the vector $b$ is an element of $\cG$, and $A : \cH_1 \rightarrow \cG$ and $B : \cH_2 \rightarrow \cG$ are linear operators. 
Problem~\eqref{eq:simplesplit} models a variety of tasks in signal recovery where one function corresponds to a \emph{data fitting term} and the other enforces \emph{prior knowledge}, such as sparsity, low rank, or smoothness \cite{combettes2011proximal}.  In this paper, we apply relaxed PRS (Algorithm~\ref{alg:DRS}) to solve Problem~\eqref{eq:simplesplit}.  On the other hand, Problem~\eqref{eq:simplelinearconstrained} models tasks in machine learning, image processing and distributed optimization. The linear constraint can be used to enforce data fitting, but it can also be used to split variables in a way that gives rise to parallel or distributed optimization algorithms \cite{bertsekas1989parallel,boyd2011distributed}.  We will apply relaxed ADMM (Algorithm~\ref{alg:ADMM}) to Problem~\eqref{eq:simplelinearconstrained}.

\subsection{Goals, challenges, and approaches}
This work improves the theoretical understanding of DRS, PRS, and ADMM, as well as their averaged versions. When applied to convex optimization problems, they are known to converge under rather general conditions \cite[Corollary 27.4]{bauschke2011convex}. 
This work seeks to complement the results of \cite{davis2014convergence}, which are developed under general convexity assumptions, by deriving stronger rates under correspondingly stronger conditions on Problems~\ref{eq:simplesplit} and~\ref{eq:simplelinearconstrained}. One of the main consequences of this work is that the relaxed PRS and ADMM algorithms automatically adapt to the regularity of the problem at hand and achieve convergence rates that improve upon the worst-case rates shown in \cite{davis2014convergence} for the nonsmooth case. Thus, our results offer an explanation of the great performance of relaxed PRS and ADMM observed in practice, and together with \cite{davis2014convergence} we now have a comprehensive convergence rate analysis of the relaxed PRS and ADMM algorithms.
\begin{table}
\renewcommand{\arraystretch}{1.3}
\begin{center}
\begin{tabular}{c|c|c|c}\hline
Regularity assumption & \multicolumn{2}{|c|}{Objective error}  & \multirow{2}{*}{FPR} \\\cline{2-3}
beyond convexity & Rate & Type  &\\\hline\hline
None & \multicolumn{2}{|c|}{not available}  & \multirow{5}{*}{$o(1/k)$} \\\cline{1-3}
\multirow{2}{*}{Lipschitz $f$ or $g$~\cite{davis2014convergence}} & $o(1/\sqrt{k})$ & nonergodic & \\\cdashline{2-3}
 & $O(1/k)$ & ergodic$^\dag$  & \\\cline{1-1}\cline{2-3}
\multirow{2}{*}{Strongly convex $f$ or $g$} & $o(1/k)$ & best itr. &  \\\cdashline{2-3}
 & $O(1/k)$ & ergodic$^\dag$ &  \\\cline{1-3}
\multirow{2}{*}{Lipschitz  $\nabla g$} & $o(1/k)$ &  best itr.  & \\\cline{2-4}
& $o(1/k)$$^\ddag$ & nonergodic$^\ddag$ & $o(1/k^2)$$^\ddag$  \\\hline
Lipschitz $\nabla f$ or $\nabla g$,  & \multirow{2}{*}{$O(e^{-k})$} & \multirow{2}{*}{R-linear} & \multirow{2}{*}{$O(e^{-k})$}\\
strongly convex $f$ or $g$ & & & \\\hline
$f = d_{C_1}^2$ and $g = d_{C_2}^2$,  & \multirow{2}{*}{$O(e^{-k})$} & \multirow{2}{*}{R-linear} & \multirow{2}{*}{$O(e^{-k})$}\\
$\{C_1, C_2\}$ linearly regular & & & \\\hline
\end{tabular}\\[5pt]

\caption{Summary of convergence rates for \emph{relaxed PRS} with relaxation parameters $\lambda_k\in(\epsilon,1-\epsilon)$, for any $\epsilon>0$. FPR stands for  the fixed-point residual $\|\TPRS z^{k} - z^k\|^2$. $^\dag$These two ergodic rates hold for $\lambda_k\in(\epsilon,1]$. $^\ddag$These rates hold for DRS ($\lambda_k=1/2$) and properly bounded  step size $\gamma$.\label{tb:rPRS}}
\end{center}
\end{table}

\begin{table}
\begin{center}
\begin{tabular}{l|c|c|c|c|c}\hline
&\multicolumn{3}{c|}{Regularity assumption beyond convexity} & Convergence & \multirow{2}{*}{Type} \\\cline{2-4}
& Strongly convex & Lipschitz & Full  rank & rate &  \\\hline\hline
\multirow{4}{*}{1} & \multirow{4}{*}{-} & \multirow{4}{*}{-} & \multirow{4}{*}{-} & $o(1/k)$ & nonergodic feas. \\\cline{5-6}
& &  &  & $O(1/k^2)$ & ergodic feas. \\\cline{5-6}
& &  &  & $o(1/\sqrt{k})$ & nonergodic obj. error \\\cline{5-6}
& &  &  & $O(1/k)$ & ergodic obj. error\\\hline
\multirow{2}{*}{2} & \multirow{2}{*}{$g$} & \multirow{2}{*}{-} & \multirow{2}{*}{-} & $o(1/k^2)$ & feasibility \\\cline{5-6}
& &  &  & $o(1/k)$ & objective \\\hline
3 & $g$ & $\nabla g$ & $B$ (row rank) & \multirow{4}{*}{$O(e^{-k})$} & R-linear \\\cline{1-4}
4 & $f$ & $\nabla f$ & $A$ (row rank) &  & feasibility, \\\cline{1-4}
5 & $f$ & $\nabla g$ & $B$ (row rank) &  & objective error, \\\cline{1-4}
6 & $g$ & $\nabla f$ & $A$ (row rank) &  & solution error \\
\hline
\end{tabular}\\[5pt]
\end{center}
\caption{Summary of convergence rates for relaxed ADMM. Feasibility is $\|Ax^k+By^k-b\|^2$,  objective error is $(f(x^k)+g(y^k))-(f(x^*)+g(y^*))$, and solution error includes $\|w^k-w^*\|^2$, $\|Ax^k-Ax^*\|^2$, and $\|By^k-By^*\|^2$. Case 1 is from~\cite{davis2014convergence}, where the nonergodic rates  hold for relaxation parameters $\lambda_k\in(\epsilon,1-\epsilon)$, for any $\epsilon>0$, and the ergodic rates hold for $\lambda_k\in(\epsilon,1]$. Case 2 also requires a  bounded step size. Each of cases 3--6 ensures R-linear convergence.  
\label{tb:ADMM}
}
\end{table}

In this paper, we derive the convergence rates of the objective error and fixed-point residual (FPR) of  relaxed PRS applied to Problem~\eqref{eq:simplesplit}; see Table \ref{tb:rPRS}. In addition, we derive the convergence rates of the constraint violations and objective errors 
for relaxed ADMM applied to Problem~\eqref{eq:simplelinearconstrained}; see Table \ref{tb:ADMM}. {By appealing to counterexamples in~\cite{davis2014convergence}, several of the rates in Table~\ref{tb:rPRS} can be shown to be tight up to constant factors.} 

The derived rates are useful for determining how many iterations of the relaxed PRS and ADMM algorithms are needed in order to reach a certain accuracy,  to decide when to stop an algorithm, and to compare relaxed PRS and ADMM to other algorithms in terms of their worst-case complexities.

\subsection{Notation}
In what follows,   $\cH, \cH_1, \cH_2, \cG$ denote (possibly infinite dimensional) Hilbert spaces. In fixed-point iterations, $(\lambda_j)_{j \geq 0} \subset \vR_+$ will denote a sequence of relaxation parameters, and
\begin{equation}\label{def:Lambda}\Lambda_k := \sum_{i=0}^k \lambda_i
\end{equation} is its $k$th partial sum. To ease notational memory, the reader may assume that $\lambda_k \equiv (1/2)$ and $\Lambda_k =(k+1)/2$ in the DRS algorithm, or that $\lambda_k \equiv 1$ and $\Lambda_k = (k+1)$ in the PRS algorithm. Given the sequence  $(x^j)_{j \geq 0}\subset \cH$, we let $\overline{x}^k = ({1}/{\Lambda_k})\sum_{i=0}^k \lambda_i x^i$ denote its $k$th average  with respect to the sequence $(\lambda_j)_{j \geq 0}$.
A convergence result is \emph{ergodic} if it applies to the sequence $(\overline{x}^j)_{j \geq 0}$, and \emph{nonergodic} if it applies to the sequence $(x^j)_{j \geq 0}$.

Given a closed, proper, and convex function $f : \cH \rightarrow (-\infty, \infty]$, the set $\partial f(x)$ denotes its subdifferential at $x$ and
$
\tnabla f(x) \in \partial f(x)
$
denotes a subgradient. (This notation was used in \cite[Eq. (1.10)]{bertsekas2011incremental}.) The convex conjugate of a closed, proper, and convex function $f$ is
$
f^\ast(y) := \sup_{x \in \cH} \dotp{y, x} - f(x).
$
Let $I_{\cH}: \cH \rightarrow \cH$ denote the identity map. For any point $x \in \cH$ and  $\gamma \in \vR_{++}$, we let
$\prox_{\gamma f}(x) := \argmin_{y \in \cH} f(y) + \frac{1}{2\gamma} \|y - x\|^2$ and $\refl_{\gamma f} := 2\prox_{\gamma f} - I_{\cH},$
which are known as the \emph{proximal} and \emph{reflection} operators. In addition, we define the PRS operator:
\begin{align*}
\TPRS &:= \refl_{\gamma f} \circ \refl_{\gamma g}.
\end{align*}
Let $\lambda > 0$. For every nonexpansive map $T : \cH \rightarrow \cH$ we define the averaged map:
\begin{align*}
T_{\lambda} := (1-\lambda)I_{\cH} + \lambda T.
\end{align*}
We call the following identity the \emph{cosine rule}:
\begin{align*}
\|y-z\|^2+2\dotp{y-x,z-x}=\|y-x\|^2+\|z-x\|^2,\quad\forall x,y,z\in\cH \numberthis\label{eq:cosinerule}.
\end{align*}
\subsection{Assumptions}

We list the the assumptions used throughout this papers as follows.

\begin{assump}[Problem assumptions]
Every function we consider is closed, proper, and convex.
\end{assump}
Unless otherwise stated, a function is not necessarily differentiable.

\begin{assump}[Solution existence]\label{assump:additivesub}
Functions $f, g : \cH \rightarrow (-\infty, \infty]$ satisfy
\begin{align}
\zer(\partial f + \partial g) \neq \emptyset.
\end{align}
\end{assump}
Note that this assumption is slightly stronger than the existence of a minimizer because $\zer(\partial f + \partial g) \neq \zer(\partial (f + g))$, in general \cite[Remark 16.7]{bauschke2011convex}. Nevertheless, this assumption is standard.

\begin{assump}[Differentiability]
Every differentiable function is Fr{\'e}chet differentiable \cite[Def. 2.45]{bauschke2011convex}.
\end{assump}

\subsection{The Douglas-Rachford and relaxed Peaceman-Rachford Splitting Algorithms}

The results of this paper apply to several operator-splitting algorithms that are all based on the atomic evaluation of the \emph{proximal operator}. By default, all algorithms start from an arbitrary $z^0 \in \cH$. The Douglas-Rachford splitting (DRS) algorithm applied to minimizing $f+g$ is as follows:
\begin{align*}
\begin{cases}
x_g^k = \prox_{\gamma g}(z^k);\\
x_f^k = \prox_{\gamma f}( 2 x_g^k - z^k);\\
z^{k+1} = z^k +  (x_f^k - x_g^k);
\end{cases}
\quad k = 0, 1, \ldots,
\end{align*}
which has the equivalent operator-theoretic and subgradient form (Lemma~\ref{prop:DRSmainidentity}):
\begin{align*}
z^{k+1} &= \frac{1}{2}(I_{\cH} + \TPRS)(z^k) = z^k - \gamma(\tnabla f(x_f^k) + \tnabla g(x_g^k)), \quad k = 0, 1, \ldots,
\end{align*}
where $\tnabla f(x_f^k)\in\partial f(x_f^k)$ and  $\tnabla g(x_g^k)\in \partial g(x_g^k)$. (See Part~\ref{prop:basicprox:part:optprox} of Proposition~\ref{prop:basicprox} for how the notation $\tnabla$ relates to $\prox$.) In the above algorithm, we can replace the $(1/2)$-average of $I_{\cH}$ and $\TPRS$ with any other weight; this results the \emph{relaxed PRS} algorithm:

\begin{algorithm}[H]
\begin{algorithmic}
\Require $z^0 \in \cH, ~\gamma > 0, ~(\lambda_j)_{j \geq 0}\subset   (0, 1]$
\For{$k=0,~1,\ldots$}
\State $z^{k+1} = (1-\lambda_k)z^k +   \lambda_k\refl_{\gamma f} \circ \refl_{\gamma g}(z^k) $\;
\EndFor
\end{algorithmic}
\caption{{Relaxed Peaceman-Rachford Splitting (relaxed PRS)}}
\label{alg:DRS}
\end{algorithm}
The special cases $\lambda_k \equiv 1/2$ and $\lambda_k \equiv 1$ are called the DRS and PRS algorithms, respectively.

\subsection{Practical implications: a comparison with forward-backward splitting}\label{section:FBSpractical}

Suppose that the function $g$ in Problem~\ref{eq:simplesplit} is differentiable and $\nabla g$ is $(1/\beta)$-Lipschitz.  Under this smoothness assumption, we can apply FBS algorithm to Problem~\ref{eq:simplesplit}: given $z^0 \in \cH$, for all $k \geq 0$, define
\begin{align*}
z^{k+1} = \prox_{\gamma f} (z^k - \gamma \nabla g(z^k)).
\end{align*}
To ensure convergence, the  stepsize parameter $\gamma$ must be strictly less than $2\beta$.

Now because the gradient operator is often simpler to evaluate than the proximal operator, it may be preferable to use FBS instead of relaxed PRS whenever one of the objectives is differentiable.  From our results, we can give two reasons why it may be preferable to use relaxed PRS over FBS:
\begin{enumerate}
\item If the Lipschitz constant of the gradient is known, our analysis indicates how to properly choose stepsizes of relaxed PRS so that both algorithms converge with the same rate (Theorem~\ref{thm:differentiableobjective}). In practice, relaxed PRS is often observed to converge faster than FBS, so our results at least indicate that we can do no worse by using relaxed PRS.
\item If the Lipschitz constant of the gradient is not known,  a line search procedure can be used to guarantee convergence of FBS.  If this procedure is more expensive than evaluating the proximal operator, then relaxed PRS should be used.  Indeed, Theorem~\ref{thm:lipschitzbest} shows that the ``best iterate" of relaxed PRS will converge with rate $o(1/(k+1))$ regardless of the chosen stepsize, whereas FBS may fail to converge.
\end{enumerate}

Thus, one of our main contributions is the ``demystification" of parameter choices, and a partial explanation of the perceived practical advantage of relaxed PRS over FBS.

\subsection{Basic properties of proximal operators}\label{sec:nonexpansive}
The following properties are included in textbooks such as \cite{bauschke2011convex}.

\begin{proposition}\label{prop:basicprox}
Let $f, g : \cH \rightarrow (-\infty, \infty)$ be closed, proper, and convex functions, and let $T : \cH \rightarrow \cH$ be nonexpansive. The the following are true:
\begin{enumerate}
\item\label{prop:basicprox:part:optprox} {\bf Optimality conditions of $\prox$:} Let $x \in \cH$. Then $x^+ = \prox_{\gamma f} (x)$ if, and only if, $$\tnabla f(x^+) :=\frac{1}{\gamma}(x-x^+) \in \partial f(x^+).$$
\item \label{cor:proxcontraction} {\bf The proximal operator $\prox_{\gamma f} :  \cH \rightarrow \cH$ is ${1}/{2}$-averaged:} 
\begin{align}\label{cor:proxcontraction:eq:main}
\|\prox_{\gamma f}(x) - \prox_{\gamma f}(y) \|^2 &\leq \|x - y\|^2 - \|(x - \prox_{\gamma f}(x)) - (y - \prox_{\gamma f}(y))\|^2.
\end{align}
\item \label{prop:basicprox:part:nonexpansive} {\bf Nonexpansiveness of the PRS operator:} The operator $\refl_{\gamma f} : \cH \rightarrow \cH$ is nonexpansive. Therefore, the composition
$\TPRS = \refl_{\gamma f} \circ \refl_{\gamma g}.$
is nonexpansive.
\end{enumerate}
\end{proposition}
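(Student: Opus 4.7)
The plan is to prove the three parts in order, since each later part builds on the earlier ones, and to rely on standard tools (first-order optimality, monotonicity of the subdifferential, and the cosine rule \eqref{eq:cosinerule}).

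For Part~\ref{prop:basicprox:part:optprox}, I would start from the definition $x^+ = \argmin_y f(y) + \frac{1}{2\gamma}\|y-x\|^2$. Since $f$ is closed, proper, and convex and the quadratic term is everywhere finite and differentiable, the subdifferential sum rule applies, so the first-order optimality condition reads $0 \in \partial f(x^+) + \frac{1}{\gamma}(x^+ - x)$. Rearranging gives $\frac{1}{\gamma}(x-x^+) \in \partial f(x^+)$, and the converse is immediate because the objective is strictly convex in $y$, so the optimality condition characterizes the unique minimizer.

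For Part~\ref{cor:proxcontraction}, set $x^+ := \prox_{\gamma f}(x)$ and $y^+ := \prox_{\gamma f}(y)$. Part~\ref{prop:basicprox:part:optprox} gives $\frac{1}{\gamma}(x-x^+)\in\partial f(x^+)$ and $\frac{1}{\gamma}(y-y^+)\in\partial f(y^+)$, so monotonicity of $\partial f$ yields $\langle (x-x^+)-(y-y^+),\, x^+-y^+\rangle \geq 0$. The key algebraic move is to apply the cosine rule \eqref{eq:cosinerule} with the points $(x-y)$, $(x^+-y^+)$, and the shifted residual $(x-x^+)-(y-y^+) = (x-y)-(x^+-y^+)$ to convert this inner-product inequality into
\[
\|x^+-y^+\|^2 \leq \|x-y\|^2 - \|(x-x^+)-(y-y^+)\|^2,
\]
which is precisely \eqref{cor:proxcontraction:eq:main}.

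For Part~\ref{prop:basicprox:part:nonexpansive}, I would use the classical equivalence: a map $T$ satisfies $\|Tx-Ty\|^2 \leq \|x-y\|^2 - \|(I-T)x - (I-T)y\|^2$ if and only if $2T - I$ is nonexpansive. Expanding $\|(2\prox_{\gamma f}-I)(x) - (2\prox_{\gamma f}-I)(y)\|^2$ and using \eqref{cor:proxcontraction:eq:main} from Part~\ref{cor:proxcontraction} yields $\|\refl_{\gamma f}(x) - \refl_{\gamma f}(y)\|^2 \leq \|x-y\|^2$. The same argument applies to $\refl_{\gamma g}$, and nonexpansiveness of the composition $\TPRS = \refl_{\gamma f}\circ\refl_{\gamma g}$ follows because composition preserves nonexpansiveness.

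None of the steps presents a serious obstacle; the only subtle point is the bookkeeping in Part~\ref{cor:proxcontraction}, where one must expand the correct inner product via the cosine rule to land exactly on the firm-nonexpansiveness inequality rather than a weaker bound. All other arguments are direct consequences of convex analysis facts already invoked in the excerpt.
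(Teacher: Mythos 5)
Your proof is correct and follows the standard textbook route: the paper offers no proof of this proposition, instead citing \cite{bauschke2011convex}, and your chain of arguments (first-order optimality via the subdifferential sum rule, monotonicity of $\partial f$ combined with the cosine rule \eqref{eq:cosinerule} to get firm nonexpansiveness, and the equivalence between firm nonexpansiveness of $\prox_{\gamma f}$ and nonexpansiveness of $2\prox_{\gamma f}-I_{\cH}$) is precisely how that reference establishes each part. The algebra in your Part~\ref{cor:proxcontraction} checks out: with $u=x-y$ and $v=x^+-y^+$, monotonicity gives $\dotp{u,v}\geq\|v\|^2$, and the cosine rule turns this into $\|v\|^2\leq\|u\|^2-\|u-v\|^2$ as claimed.
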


\subsection{Convergence rates of summable sequences}
The following facts will be key to deducing Convergence rates in Sections~\ref{sec:generalstrongconvexity} and~\ref{sec:lipschitzderivatives}.  It originally appeared in \cite[Lemma 3]{davis2014convergence}.

\begin{fact}[Summable sequence convergence rates]\label{lem:sumsequence}
Suppose that the nonnegative scalar sequences  $(\lambda_j)_{j\geq0}$ and $(a_{j})_{j\geq 0}$ satisfy $\sum_{i=0}^\infty \lambda_ia_i < \infty$, and define $\Lambda_k$ as in Equation~\eqref{def:Lambda}.
\begin{enumerate}
\item \label{lem:sumsequence:part:main} \textbf{Monotonicity:} If $(a_j)_{j \geq0}$ is \emph{monotonically nonincreasing}, then
\begin{align}\label{eq:sumsequence-bigo}
a_{k} \leq \frac{1}{\Lambda_k}\left(\sum_{i=0}^\infty\lambda_i a_i \right) && and && a_{k} = o\left(\frac{1}{\Lambda_{k} - \Lambda_{\ceil{{k}/{2}}}}\right).
\end{align}
\item \label{lem:sumsequence:part:b} \textbf{Faster rates:} Suppose $(b_j)_{j\geq0}$ is a nonnegative scalar sequence, that $\sum_{i=0}^\infty b_j < \infty$, and that $\lambda_k a_k \leq b_k - b_{k+1}$ for all $k\geq 0$. Then the following sum is finite:
\begin{align}
\sum_{i=0}^\infty (i+1)\lambda_ia_i \leq \sum_{i=0}^\infty b_i
\end{align}
\item \label{lem:sumsequence:part:nonmono} \textbf{No monotonicity:} For all $k \geq 0$, define the sequence of ``best indices'' with respect to $(a_j)_{j\ge 0}$ as
\begin{align*}
k_{\mathrm{best}} &:= \argmin_i\{a_i | i = 0, \cdots, k\}.
\end{align*}
Then $(a_{j_{\mathrm{best}}})_{j \geq 0}$ is nonincreasing, and the above bounds continue to hold when $a_k$ is replaced with $a_{\kbest}$.  
\end{enumerate}
\end{fact}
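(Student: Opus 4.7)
The plan is to handle the three parts in order, since Part~\ref{lem:sumsequence:part:nonmono} piggybacks on Part~\ref{lem:sumsequence:part:main}, and Part~\ref{lem:sumsequence:part:b} is essentially a summation-by-parts trick that is independent of the other two.

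For Part~\ref{lem:sumsequence:part:main}, the first bound is immediate from monotonicity: since $a_k\le a_i$ for every $i\le k$, we have
\[
\Lambda_k\,a_k \;=\; \Bigl(\sum_{i=0}^k\lambda_i\Bigr)a_k \;\le\; \sum_{i=0}^k \lambda_i a_i \;\le\; \sum_{i=0}^\infty \lambda_i a_i,
\]
and dividing by $\Lambda_k$ gives the $O$-estimate. For the $o$-estimate, I would apply the same monotonicity argument to the tail: $a_k\,(\Lambda_k-\Lambda_{\lceil k/2\rceil})\le\sum_{i=\lceil k/2\rceil+1}^{k}\lambda_i a_i$, and this tail sum is the difference of partial sums of a convergent series, hence tends to $0$ as $k\to\infty$. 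No real obstacle here.

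For Part~\ref{lem:sumsequence:part:b}, the natural approach is Abel summation. Multiplying the hypothesis $\lambda_i a_i\le b_i-b_{i+1}$ by $(i+1)$ and summing over $i=0,\dots,N$ gives
\[
\sum_{i=0}^N (i+1)\lambda_i a_i \;\le\; \sum_{i=0}^N (i+1)(b_i-b_{i+1}) \;=\; \sum_{i=0}^N b_i \;-\; (N+1)\,b_{N+1},
\]
after shifting the index in the second sum and telescoping the difference. Since $b_{N+1}\ge 0$ and $\sum_i b_i<\infty$, letting $N\to\infty$ yields the claimed bound. The only thing to verify is the elementary index shift; I expect no real difficulty.

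For Part~\ref{lem:sumsequence:part:nonmono}, monotonicity of $(a_{j_{\best}})_{j\ge 0}$ holds by the very definition of a running minimum: $a_{(k+1)_{\best}}=\min\{a_{k_{\best}},a_{k+1}\}\le a_{k_{\best}}$. To transfer Part~\ref{lem:sumsequence:part:main}, observe that $a_{i_{\best}}\le a_i$ for all $i$, so $\sum_i\lambda_i a_{i_{\best}}\le\sum_i\lambda_i a_i<\infty$, and the hypotheses of Part~\ref{lem:sumsequence:part:main} apply to $(a_{j_{\best}})_{j\ge 0}$ in place of $(a_j)_{j\ge 0}$. The main subtlety — and the only spot where I'd pause — is making sure the summability of the ``best'' sequence really follows from that of the original sequence, but this is immediate from the pointwise comparison $a_{i_{\best}}\le a_i$. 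Overall the proof is mechanical once the right manipulations (monotonicity-plus-tail for Part~\ref{lem:sumsequence:part:main}, Abel summation for Part~\ref{lem:sumsequence:part:b}) are identified.
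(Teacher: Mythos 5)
Your proof is correct. The paper does not supply its own proof of this Fact---it defers to \cite[Lemma 3]{davis2014convergence}---but your three arguments are exactly the standard ones: the monotonicity bound $a_k(\Lambda_k-\Lambda_{\ceil{k/2}})\le\sum_{i=\ceil{k/2}+1}^{k}\lambda_i a_i$ combined with the Cauchy criterion for the little-$o$ claim in Part~\ref{lem:sumsequence:part:main}, summation by parts with the sign of $(N+1)b_{N+1}$ for Part~\ref{lem:sumsequence:part:b}, and the pointwise comparison $a_{i_{\best}}\le a_i$ (which also transfers the hypothesis $\lambda_k a_{k_{\best}}\le \lambda_k a_k\le b_k-b_{k+1}$ needed for Part~\ref{lem:sumsequence:part:b}) for Part~\ref{lem:sumsequence:part:nonmono}.
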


\subsection{Convergence of the fixed-point residual (FPR)}\label{sec:FPR}

We will need to following facts in our analysis below: 
\begin{fact}[Convergence rates of FPR]\label{fact:averagedconvergence}
Let $z^\ast \in \cH$ be a fixed point of $\TPRS$, and let $(z^j)_{j \geq 0}$ be generated by the relaxed PRS algorithm: $z^{k+1} = (\TPRS)_{\lambda_k}z^k. $ Then the following are true (\cite[Theorem 1]{davis2014convergence}):
\begin{enumerate}
\item $(\|z^j - z^\ast\|)_{j \geq 0}$ is monotonically nonincreasing; \label{fact:averagedconvergence:eq:mono}
\item 
$(\|\TPRS z^j - z^j\|)_{j \geq 0}$ is monotonically nonincreasing, and thus so is $((1/\lambda_j)\|z^{j+1}-z^j\|)_{j \geq 0}$; \label{fact:averagedconvergence:eq:FPRmono}
\item If $\lambda_k \equiv \lambda$, then $(\|(\TPRS)_\lambda z^j - z^\ast\|)_{j\geq0}$ is monotonically nonincreasing;
\item The Fej\'er-type inequality holds: for all $\lambda \in (0, 1]$
\begin{align*}
\|(\TPRS)_{\lambda}z^k - z^\ast\|^2 &\leq \|z^k - z^\ast\|^2 - \frac{1- \lambda}{\lambda}\|(\TPRS)_{\lambda} z^k - z^k\|^2.\numberthis\label{eq:fejer}
\end{align*}
\item For all $k \geq 0$, let $\tau_k = \lambda_k(1-\lambda_k)$. Then $\sum_{i=0}^\infty \tau_i\|\TPRS z^i - z^i\|^2 \leq \|z^0 - z^\ast\|^2.$
\item If $\underline{\tau} := \inf_{j \geq 0} \lambda_k(1-\lambda_k) > 0$, then the following convergence rates hold:
\begin{align}\label{cor:DRSaveragedconvergence:eq:main}
\|\TPRS z^{k} - z^k\|^2 \leq \frac{\|z^{0} - z^\ast\|^2}{\underline{\tau}(k+1)} && and &&  \|\TPRS z^{k} - z^k\|^2 = o\left(\frac{1}{\underline{\tau}(k+1)}\right).
\end{align}
\end{enumerate}
\end{fact}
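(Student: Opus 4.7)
The plan is to derive all six parts from two ingredients: the nonexpansiveness of $\TPRS$ (Proposition~\ref{prop:basicprox}, Part~\ref{prop:basicprox:part:nonexpansive}), and the convex-combination identity
\[
\|\alpha a + (1-\alpha)b\|^2 = \alpha\|a\|^2 + (1-\alpha)\|b\|^2 - \alpha(1-\alpha)\|a-b\|^2,\qquad \alpha\in[0,1],
\]
which is the three-point form of the cosine rule~\eqref{eq:cosinerule}. Applying it with $a = z^k - z^\ast$, $b = \TPRS z^k - z^\ast$ and $\alpha = 1-\lambda_k$, using $\TPRS z^\ast = z^\ast$ together with nonexpansiveness $\|\TPRS z^k - z^\ast\|\le \|z^k - z^\ast\|$, yields the ``workhorse'' bound
\[
\|z^{k+1} - z^\ast\|^2 \le \|z^k - z^\ast\|^2 - \lambda_k(1-\lambda_k)\|\TPRS z^k - z^k\|^2.
\]
From this, Item~1 is immediate (drop the nonpositive residual term); Item~3 is just Item~1 applied to the shifted iterates, since $z^{k+1} = (\TPRS)_\lambda z^k$ when $\lambda_k \equiv \lambda$; and Item~4 (the Fej\'er inequality) follows by substituting the elementary identity $\|(\TPRS)_\lambda z^k - z^k\| = \lambda\|\TPRS z^k - z^k\|$ into the workhorse bound, which converts the coefficient $\lambda(1-\lambda)$ into $(1-\lambda)/\lambda$. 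Telescoping the workhorse bound over $k\ge 0$ and using $\|z^{k+1}-z^\ast\|^2\ge 0$ immediately gives Item~5.

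Item~2, the monotonicity of the FPR, is the one step that requires a separate argument. The plan is to combine the update rule $z^{k+1}=(1-\lambda_k)z^k+\lambda_k\TPRS z^k$ with the decomposition
\[
\TPRS z^{k+1} - z^{k+1} = (\TPRS z^{k+1} - \TPRS z^k) + (1-\lambda_k)(\TPRS z^k - z^k).
\]
Nonexpansiveness then gives $\|\TPRS z^{k+1} - \TPRS z^k\|\le \|z^{k+1}-z^k\| = \lambda_k\|\TPRS z^k - z^k\|$, and the triangle inequality collapses the two summands into $\|\TPRS z^k - z^k\|$. The equivalent claim for $(1/\lambda_k)\|z^{k+1}-z^k\|$ is immediate from $z^{k+1}-z^k = \lambda_k(\TPRS z^k - z^k)$.

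Finally, Item~6 combines Items~2 and~5 by feeding them into Fact~\ref{lem:sumsequence}, Part~\ref{lem:sumsequence:part:main}, taking the weights to be $\tau_i$ and the nonincreasing scalars to be $a_i = \|\TPRS z^i - z^i\|^2$. The hypothesis $\tau_i\ge\underline{\tau}$ gives $\sum_{i=0}^k \tau_i \ge \underline{\tau}(k+1)$ and $\sum_{i=\ceil{k/2}+1}^k \tau_i \ge \underline{\tau}\lfloor k/2\rfloor$, which convert the abstract bounds of Fact~\ref{lem:sumsequence} into the displayed $O(1/(\underline\tau(k+1)))$ and $o(1/(\underline\tau(k+1)))$ rates. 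I expect Item~2 to be the only step that is more than routine bookkeeping; once the workhorse inequality and the summable-sequence lemma are invoked, the remaining items are essentially immediate.
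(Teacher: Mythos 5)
Your proposal is correct, but note that the paper does not actually prove this statement: it is stated as a \emph{Fact} and attributed wholesale to \cite[Theorem 1]{davis2014convergence}, so there is no internal proof to compare against. Your self-contained derivation is the standard Krasnosel'skii--Mann argument and every step checks out: the ``workhorse'' inequality $\|z^{k+1}-z^\ast\|^2 \le \|z^k-z^\ast\|^2 - \lambda_k(1-\lambda_k)\|\TPRS z^k - z^k\|^2$ follows exactly as you say from the convex-combination identity plus nonexpansiveness, and it delivers Items~1, 3, 4, and 5 by the routine manipulations you describe. Your Item~2 decomposition $\TPRS z^{k+1}-z^{k+1} = (\TPRS z^{k+1}-\TPRS z^k) + (1-\lambda_k)(\TPRS z^k - z^k)$ is the right (and essentially only) way to get FPR monotonicity, and it uses $\lambda_k \le 1$ exactly where needed so that $\lambda_k + |1-\lambda_k| = 1$. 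For Item~6, feeding the nonincreasing summable sequence into Part~\ref{lem:sumsequence:part:main} of Fact~\ref{lem:sumsequence} with weights $\tau_i \ge \underline{\tau}$ gives both displayed rates, since $\Lambda_k - \Lambda_{\ceil{k/2}} \ge \underline{\tau}\floor{k/2}$ is comparable to $\underline{\tau}(k+1)$ up to a constant, which is absorbed by the little-$o$. This matches what one finds in the cited companion paper; nothing is missing.
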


\begin{remark}
We call the quantity
$\|\TPRS z^{k} - z^k\|^2$
the fixed-point residual (FPR) of the relaxed PRS algorithm. Throughout this paper, we slightly abuse terminology and call the successive iterate difference $\|z^{k+1} - z^k\|^2 = \lambda_k^2\|\TPRS z^k - z^k\|^2$ FPR as well.
\end{remark}

\subsection{Subgradients}

Lemma~\ref{prop:DRSmainidentity} is key to deducing all of the algebraic relations necessary for relating the objective error to the FPR of the relaxed PRS iteration
\begin{lemma}\label{prop:DRSmainidentity}
Let $z\in \cH$. Define auxiliary points $x_g := \prox_{\gamma g}(z)$ and $x_f := \prox_{\gamma f}(\refl_{\gamma g}(z))$. Then the identities hold:
\begin{align}
x_g = z - \gamma \tnabla g(x_g)  && \mathrm{and} && x_f &= x_g - \gamma \tnabla g(x_g) - \gamma \tnabla f(x_f). \label{prop:DRSmainidentity:f}
\end{align}
In addition, each relaxed PRS step $z^+=(\TPRS )_{\lambda}(z)$ has the following representation:
\begin{align}\label{eq:DRSmainidentity2}
z^+ - z = 2\lambda(x_f- x_g) = -2  \lambda\gamma(\tnabla g(x_g)+\tnabla f(x_f)).
\end{align}
\end{lemma}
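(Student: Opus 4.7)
The proof is essentially a direct computation that unpacks the definitions of $\prox$ and $\refl$ using the optimality condition in Part~\ref{prop:basicprox:part:optprox} of Proposition~\ref{prop:basicprox}. The plan is to treat each identity in turn and then assemble them for the final representation of the relaxed PRS step.

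First, I would establish the identity $x_g = z - \gamma\tnabla g(x_g)$. Since $x_g = \prox_{\gamma g}(z)$, the optimality condition in Part~\ref{prop:basicprox:part:optprox} says that the subgradient $\tnabla g(x_g) := (z-x_g)/\gamma$ lies in $\partial g(x_g)$, and rearranging yields the claim. Second, I would derive the identity for $x_f$. By definition $x_f = \prox_{\gamma f}(\refl_{\gamma g}(z)) = \prox_{\gamma f}(2x_g - z)$, so applying Part~\ref{prop:basicprox:part:optprox} again gives $\tnabla f(x_f) = (2x_g - z - x_f)/\gamma \in \partial f(x_f)$, hence $x_f = 2x_g - z - \gamma\tnabla f(x_f)$. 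Substituting the expression $z = x_g + \gamma\tnabla g(x_g)$ from the first identity immediately produces $x_f = x_g - \gamma\tnabla g(x_g) - \gamma\tnabla f(x_f)$.

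For the representation of one relaxed PRS step, I would expand
\[
\refl_{\gamma f}(\refl_{\gamma g}(z)) = 2x_f - (2x_g - z) = z + 2(x_f - x_g),
\]
using $\refl_{\gamma f}(w) = 2\prox_{\gamma f}(w) - w$ with $w = \refl_{\gamma g}(z) = 2x_g - z$. Then by definition of $(\TPRS)_\lambda$,
\[
z^+ - z = \lambda\bigl(\TPRS(z) - z\bigr) = 2\lambda(x_f - x_g).
\]
Finally, I would plug in the second identity $x_f - x_g = -\gamma(\tnabla g(x_g) + \tnabla f(x_f))$ to conclude $z^+ - z = -2\lambda\gamma(\tnabla g(x_g) + \tnabla f(x_f))$.

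There is no real obstacle here: every step is a one-line manipulation. The only subtlety worth a sentence of commentary is that the symbols $\tnabla g(x_g)$ and $\tnabla f(x_f)$ are not arbitrary subgradients but are specifically the ones produced by the proximal optimality condition, so the identities hold as equalities rather than inclusions.
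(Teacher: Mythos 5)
Your proof is correct and is exactly the intended argument: the paper states this lemma without a written proof (deferring to the optimality condition in Part~\ref{prop:basicprox:part:optprox} of Proposition~\ref{prop:basicprox} and to \cite{davis2014convergence}), and your direct unpacking of $\prox_{\gamma g}$, $\prox_{\gamma f}$, and the reflections is precisely that computation. Your closing remark that $\tnabla g(x_g)$ and $\tnabla f(x_f)$ denote the specific subgradients produced by the proximal optimality conditions, so the identities are genuine equalities, is the right point to flag.
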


\subsection{Fundamental inequalities}

Throughout the rest of the paper we will  use the following notation: Every function $f$ is $\mu_f$-strongly convex and $\tnabla f$ is $(1/\beta_f)$-Lipschitz. Note that if $\beta_f > 0$, then $f$ is differentiable and $\tnabla f = \nabla f$. However, we also allow the strong convexity or Lipschitz differentiability constants to vanish, in which case $\mu_f = 0$ or $\beta_f = 0$ and $f$ may fail to posses either regularity property. Thus, we always have the inequality \cite[Theorem 18.15]{bauschke2011convex}:
\begin{align}\label{eq:strongconvexandlipschitzlowerbound}
f(x) &\geq f(y) + \dotp{x-y, \tnabla f(y)} + S_f(x, y),
\end{align}
where
\begin{align}\label{eq:snotation}
S_f(x, y) &:=  \max\left\{\frac{\mu_f}{2}\|x - y\|^2, \frac{\beta_f}{2}\|\tnabla f(x) - \tnabla f(y)\|^2\right\}.
\end{align}
Note that there is a slight technicality in that $S_f(x, y)$ is only defined where $\partial f(x) \neq \emptyset$.  In particular, we only derive bounds on $S_f(x, y)$ where this is satisfied. 

The following two fundamental inequalities are straightforward modifications of the fundamental inequalities that appeared in \cite[Propositions 4 and 5]{davis2014convergence}.  When these bounds are iteratively applied, they bound the objective error by the sum of a telescoping sequence and a multiple of the FPR.

\begin{proposition}[Upper fundamental inequality]\label{prop:DRSupper}
Let $z \in \cH$,  let $z^+ = (\TPRS)_{\lambda}(z)$, and let $x_f$ and $x_g$ be defined as in Lemma~\ref{prop:DRSmainidentity}. Then for all $x\in \dom(f) \cap \dom(g)$ where $\partial f(x) \neq \emptyset$ and $\partial g(x) \neq \emptyset$, we have
\begin{align*}
4\gamma\lambda\big(f(x_f) + g(x_g) &- f(x) - g(x) + S_f(x_f, x) + S_g(x_g, x)\big) \\
&\leq \|z - x\|^2 - \|z^+ - x\|^2 + \left(1 - \frac{1}{\lambda}\right)\|z^{+} - z\|^2. \numberthis \label{prop:DRSupper:eq:main}
\end{align*}
\end{proposition}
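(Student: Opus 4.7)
The plan is to combine a simple subgradient inequality on the objective side with an algebraic identity coming from Lemma~\ref{prop:DRSmainidentity} on the iterate side.

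First, I would apply the strong-convexity inequality \eqref{eq:strongconvexandlipschitzlowerbound} with base point $x_f$ (resp.\ $x_g$) and test point $x$; rearranging and using the symmetry $S_f(x,x_f)=S_f(x_f,x)$, $S_g(x,x_g)=S_g(x_g,x)$, yields
\begin{align*}
f(x_f) + g(x_g) - f(x) - g(x) + S_f(x_f,x) + S_g(x_g,x) \leq \dotp{x_f - x, \tnabla f(x_f)} + \dotp{x_g - x, \tnabla g(x_g)}.
\end{align*}
So it suffices to show that $4\gamma\lambda$ times the right-hand side equals $\|z-x\|^2 - \|z^+ -x\|^2 + (1-1/\lambda)\|z^+ -z\|^2$.

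Next, I would split $x_f - x = (x_f - z) + (z - x)$ and $x_g - x = (x_g - z) + (z - x)$, so that the inner products decompose into a \emph{$(z{-}x)$-piece} and an \emph{offset piece}. For the $(z-x)$-piece, I would substitute the identity $z^+ - z = -2\gamma\lambda(\tnabla f(x_f) + \tnabla g(x_g))$ from Lemma~\ref{prop:DRSmainidentity} to rewrite
\begin{align*}
4\gamma\lambda\dotp{z-x,\, \tnabla f(x_f)+\tnabla g(x_g)} = 2\dotp{z-x,\, z-z^+},
\end{align*}
and then invoke the cosine rule \eqref{eq:cosinerule} applied to the triple $(z^+, z, x)$ to equate this to $\|z-x\|^2 - \|z^+ - x\|^2 + \|z^+ - z\|^2$.

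For the offset piece, the identities in Lemma~\ref{prop:DRSmainidentity} give $x_g - z = -\gamma\tnabla g(x_g)$ and $x_f - z = -2\gamma\tnabla g(x_g) - \gamma\tnabla f(x_f)$, so that
\begin{align*}
\dotp{x_f - z,\, \tnabla f(x_f)} + \dotp{x_g - z,\, \tnabla g(x_g)} = -\gamma\,\bigl\|\tnabla f(x_f)+\tnabla g(x_g)\bigr\|^2.
\end{align*}
Multiplying by $4\gamma\lambda$ and recognizing that $\|z^+ - z\|^2 = 4\lambda^2\gamma^2\|\tnabla f(x_f)+\tnabla g(x_g)\|^2$ collapses this expression to $-\|z^+ - z\|^2/\lambda$. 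Adding the two pieces produces exactly the claimed right-hand side, and chaining with the opening subgradient inequality closes the proof. No step is conceptually hard; the main obstacle is bookkeeping factors of $\gamma$, $\lambda$, and the sign convention in $z^+ - z$ so that the two contributions to $\|z^+ - z\|^2$ (one from the cosine rule, one from the offset piece) combine to give the exact coefficient $1 - 1/\lambda$.
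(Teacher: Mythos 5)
Your argument is correct and is essentially the intended one: the paper omits the proof, citing it as a straightforward modification of \cite[Propositions 4 and 5]{davis2014convergence}, and that modification is exactly what you carry out — the subgradient inequality \eqref{eq:strongconvexandlipschitzlowerbound} at $x_f$ and $x_g$ (using the symmetry of $S_f$ and $S_g$, which is legitimate here since $\partial f(x)\neq\emptyset$ and $\partial g(x)\neq\emptyset$ by hypothesis), followed by the decomposition through $z$ and the identities of Lemma~\ref{prop:DRSmainidentity}. All the coefficients check out: the $(z-x)$-piece gives $\|z-x\|^2-\|z^+-x\|^2+\|z^+-z\|^2$ and the offset piece gives $-\frac{1}{\lambda}\|z^+-z\|^2$, combining to the stated $\left(1-\frac{1}{\lambda}\right)\|z^+-z\|^2$.
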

In our analysis below, we will use the upper inequality
\begin{align*}
4\gamma\lambda\big(f(x_f) + g(x_g) &- f(x^*) - g(x^*) + S_f(x_f, x^*) + S_g(x_g, x^*)\big) \\
&\leq \|z - z^*\|^2 - \|z^+ - z^*\|^2 +2\dotp{z-z^+,z^*-x^*}+ \left(1 - \frac{1}{\lambda}\right)\|z^{+} - z\|^2, \numberthis \label{prop:DRSupper:eq:aux}\end{align*}
which is obtained from \eqref{prop:DRSupper:eq:main} by letting $x=x^*$ and applying 
$\|z - x^*\|^2 - \|z^+ - x^*\|^2=\|z-z^*\|^2-\|z^+-z^*\|^2+2\dotp{z-z^+,z^*-x^*}. $

\begin{proposition}[Lower fundamental inequality]\label{prop:DRSlower}
Let $z^\ast$ be a fixed point of $\TPRS $, and let $x^\ast = \prox_{\gamma g}(z^\ast)$.  Then for all $x_f \in  \dom(f)$ and $x_g\in \dom(g)$, the lower bound holds:
\begin{align*}
f(x_f) + g(x_g) - f(x^\ast) - g(x^\ast) &\geq \frac{1}{\gamma }\dotp{x_g - x_f,  z^\ast - x^\ast} + S_f(x_f, x^\ast) + S_g(x_f, x^\ast). \numberthis \label{prop:DRSlower:eq:main}
\end{align*}
\end{proposition}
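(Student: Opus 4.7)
The plan is to peel off the linear terms using the subgradient description of the fixed point, then to close the bound with the standard strong convexity / cocoercivity inequality~\eqref{eq:strongconvexandlipschitzlowerbound}. The starting observation is that applying Lemma~\ref{prop:DRSmainidentity} at $z=z^\ast$ (with $\lambda$ arbitrary, since $z^+=z^\ast$) collapses the auxiliary points to a single point: because $\refl_{\gamma f}\circ\refl_{\gamma g}(z^\ast)=z^\ast$, both $\prox_{\gamma g}(z^\ast)$ and $\prox_{\gamma f}(\refl_{\gamma g}(z^\ast))$ equal $x^\ast$. Substituting into \eqref{prop:DRSmainidentity:f}, the subgradient selections at $x^\ast$ are forced to be
\begin{align*}
\tnabla g(x^\ast) = \tfrac{1}{\gamma}(z^\ast - x^\ast) \in \partial g(x^\ast), \qquad
\tnabla f(x^\ast) = \tfrac{1}{\gamma}(x^\ast - z^\ast) \in \partial f(x^\ast),
\end{align*}
so in particular $\tnabla f(x^\ast) + \tnabla g(x^\ast) = 0$, which is just the KKT condition for Problem~\eqref{eq:simplesplit} translated through the fixed-point equation.

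Next I would apply the inequality~\eqref{eq:strongconvexandlipschitzlowerbound} separately to $f$ at the base point $x^\ast$ with the specific subgradient $\tnabla f(x^\ast)$ above, and to $g$ at $x^\ast$ with $\tnabla g(x^\ast)$ above. This gives
\begin{align*}
f(x_f) &\geq f(x^\ast) + \dotp{x_f - x^\ast,\, \tnabla f(x^\ast)} + S_f(x_f, x^\ast),\\
g(x_g) &\geq g(x^\ast) + \dotp{x_g - x^\ast,\, \tnabla g(x^\ast)} + S_g(x_g, x^\ast),
\end{align*}
valid for every $x_f\in\dom(f)$ and $x_g\in\dom(g)$ (the $S$ terms are well-defined because the subgradient sets on the right are nonempty at $x^\ast$, as witnessed by the explicit selections above).

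Summing the two inequalities and using $\tnabla f(x^\ast) = -\tnabla g(x^\ast) = (x^\ast - z^\ast)/\gamma$, the cross term reduces as
\begin{align*}
\dotp{x_f - x^\ast,\, \tnabla f(x^\ast)} + \dotp{x_g - x^\ast,\, \tnabla g(x^\ast)}
= \dotp{x_f - x_g,\, \tnabla f(x^\ast)}
= \tfrac{1}{\gamma}\dotp{x_g - x_f,\, z^\ast - x^\ast},
\end{align*}
which is exactly the linear term on the right-hand side of~\eqref{prop:DRSlower:eq:main}. Adding $S_f(x_f,x^\ast) + S_g(x_g,x^\ast)$ finishes the bound.

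There is no real obstacle here: the only point that requires care is verifying that the subgradient selections implicit in Lemma~\ref{prop:DRSmainidentity} at the fixed point are compatible, i.e., that $\tnabla f(x^\ast)+\tnabla g(x^\ast)=0$ with the specific representatives $(x^\ast-z^\ast)/\gamma$ and $(z^\ast-x^\ast)/\gamma$. Once this bookkeeping is in place, the inequality is just the sum of two one-point subgradient inequalities, and the right-hand side rearranges into the claimed form.
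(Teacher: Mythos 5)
Your proof is correct and is essentially the argument the paper intends: it defers to \cite[Proposition 5]{davis2014convergence} and describes the present version as a ``straightforward modification,'' which amounts exactly to your steps --- extract the subgradient selections $\tnabla g(x^\ast)=(z^\ast-x^\ast)/\gamma$ and $\tnabla f(x^\ast)=(x^\ast-z^\ast)/\gamma$ from the fixed-point identity (cf.\ Equation~\eqref{eq:gradoptimality}), apply the strengthened subgradient inequality~\eqref{eq:strongconvexandlipschitzlowerbound} at $x^\ast$ to $f$ and $g$ separately, and sum. Note only that your derivation produces $S_g(x_g,x^\ast)$, which is the intended term; the $S_g(x_f,x^\ast)$ appearing in the statement is a typo, as the subsequent applications (e.g.\ Theorem~\ref{prop:sumauxilliaryterms}) confirm.
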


\section{Strong convexity}\label{sec:generalstrongconvexity}

The following theorem will deduce the convergence of $S_f(x_f^k, x^\ast)$ and $S(x_g^k, x^\ast)$ (see Equation~\eqref{eq:snotation}). In particular, if either $f$ or $g$ is strongly convex and the sequence $(\lambda_j)_{j \geq 0}\subseteq (0, 1]$ is bounded away from zero, then $x_f^k$ and $x_g^k$ converge strongly to a minimizer of $f + g$.  Equation~\eqref{prop:sumauxilliaryterms:eq:main} is the main inequality needed to deduce linear convergence of the relaxed PRS algorithm (Section~\ref{sec:linearconvergence}), and it will reappear several times.

\begin{theorem}[Auxiliary term bound]\label{prop:sumauxilliaryterms}
Suppose that $(z^j)_{j \geq 0}$ is generated by Algorithm~\ref{alg:DRS}.  Then for all $k \geq 0$,
\begin{align}\label{prop:sumauxilliaryterms:eq:main}
8\gamma\lambda_k(S_{f}(x_f^k, x^\ast) + S_g(x_g^k, x^\ast)) &\leq \|z^{k} - z^\ast\|^2 - \|z^{k+1} - z^\ast\|^2 + \left(1 - \frac{1}{\lambda_k}\right)\|z^{k+1} - z^k\|^2.
\end{align}
Therefore, $8\gamma\sum_{i=0}^\infty \lambda_k(S_{f}(x_f^i, x^\ast) + S_g(x_g^i, x^\ast)) \leq \|z^0 - z^\ast\|^2$, and 
\begin{enumerate}
\item \label{prop:sumauxilliaryterms:part:1} \textbf{Best iterate convergence:} If $\underline{\lambda} := \inf_{j \geq 0} \lambda_j > 0$, then 
$\min_{i=0, \cdots, k}\left\{S_{f}(x_f^{i}, x^\ast)\right\} = o\left(1/(k+1)\right)$ and $\min_{i=0, \cdots, k}\left\{S_{g}(x_g^{i}, x^\ast)\right\} = o\left(1/(k+1)\right).$
\item \label{prop:sumauxilliaryterms:part:2} \textbf{Ergodic convergence:} Let $\overline{x}_f^k = (1/\Lambda_k) \sum_{i=0}^k \lambda_ix_f^i$ and $\overline{x}_g^k = (1/\Lambda_k) \sum_{i=0}^k\lambda_i x_g^i$. Then
\begin{align*}
 \overline{S}_{f}(x_f^k, x^\ast)+\overline{S}_{g}(x_g^k, x^\ast)\leq \frac{\|z^0 - z^\ast\|^2}{8 \gamma \Lambda_k}
\end{align*}
where $ \overline{S}_{f}(x_f^k, x^\ast):=\max\bigg\{\frac{\mu_f}{2}\left\|\overline{x}_f^k - x^\ast\right\|^2,~  \frac{\beta_f}{2}\bigg\|\frac{1}{\Lambda_k}\sum_{i=0}^k \tnabla f(x_f^k) - \tnabla f(x^\ast)\bigg\|^2\bigg\}  $
and $\overline{S}_{g}(x_g^k, x^\ast)$ is similarly defined.
\item  \label{prop:sumauxilliaryterms:part:3} \textbf{Nonergodic convergence:} If $\underline{\tau} = \inf_{j \geq 0} \lambda_j(1-\lambda_j)> 0$, then 
$S_{f}(x_f^{k}, x^\ast) + S_g(x_g^{k}, x^\ast) = o\left(1/\sqrt{k+1}\right). $
\end{enumerate}
\end{theorem}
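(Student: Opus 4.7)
The plan is to derive the key inequality~\eqref{prop:sumauxilliaryterms:eq:main} by combining the upper and lower fundamental inequalities, and then read off each of the three convergence rates from Fact~\ref{lem:sumsequence} together with the Fej\'er-type monotonicity of Fact~\ref{fact:averagedconvergence}. Concretely, I would apply Proposition~\ref{prop:DRSupper} in the form~\eqref{prop:DRSupper:eq:aux} with $z=z^k$ and $z^+=z^{k+1}$, and separately scale Proposition~\ref{prop:DRSlower}, applied at $x_f=x_f^k$ and $x_g=x_g^k$, by $4\gamma\lambda_k$. Adding the upper inequality to the negative of the scaled lower inequality eliminates the objective-value terms $4\gamma\lambda_k(f(x_f^k)+g(x_g^k)-f(x^\ast)-g(x^\ast))$; the copies of $4\gamma\lambda_k(S_f(x_f^k,x^\ast)+S_g(x_g^k,x^\ast))$ combine into the desired $8\gamma\lambda_k$-factor; and the cross term $-4\lambda_k\dotp{x_g^k-x_f^k,z^\ast-x^\ast}$ on the left cancels exactly with $2\dotp{z^k-z^{k+1},z^\ast-x^\ast}$ on the right by means of the identity~\eqref{eq:DRSmainidentity2}, which gives $z^{k+1}-z^k=2\lambda_k(x_f^k-x_g^k)$. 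This cross-term cancellation is the one non-routine step and is the main obstacle; the remaining manipulations are bookkeeping.

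With~\eqref{prop:sumauxilliaryterms:eq:main} established, the global summability $8\gamma\sum_{i=0}^\infty\lambda_i(S_f(x_f^i,x^\ast)+S_g(x_g^i,x^\ast))\leq\|z^0-z^\ast\|^2$ is immediate: since $\lambda_k\in(0,1]$ the residual term $(1-1/\lambda_k)\|z^{k+1}-z^k\|^2$ is nonpositive, so the right-hand side telescopes. The \emph{ergodic} statement then reduces to Jensen's inequality applied to each of the two candidates inside the $\max$ defining $S_f$ (resp.\ $S_g$): both $x\mapsto\frac{\mu_f}{2}\|x-x^\ast\|^2$ and $u\mapsto\frac{\beta_f}{2}\|u-\tnabla f(x^\ast)\|^2$ are convex, so each is bounded above by the $\lambda_i$-weighted average of $S_f(x_f^i,x^\ast)$, and hence so is their maximum. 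For the \emph{best-iterate} rate I apply Fact~\ref{lem:sumsequence}\eqref{lem:sumsequence:part:nonmono} to the nonnegative scalar sequence $a_k:=S_f(x_f^k,x^\ast)+S_g(x_g^k,x^\ast)$; because $\underline{\lambda}>0$ the partial-sum gap $\Lambda_k-\Lambda_{\lceil k/2\rceil}$ grows linearly in $k$, yielding the $o(1/(k+1))$ rate for $a_{\kbest}$, and the individual rates for $S_f$ and $S_g$ follow from their nonnegativity (each is bounded by $a_{\kbest}$ at the best index for the sum).

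For the \emph{nonergodic} rate I would drop the nonpositive term from~\eqref{prop:sumauxilliaryterms:eq:main} and factor the telescope as a difference of squares,
\begin{align*}
8\gamma\lambda_k\bigl(S_f(x_f^k,x^\ast)+S_g(x_g^k,x^\ast)\bigr) \leq \bigl(\|z^k-z^\ast\|-\|z^{k+1}-z^\ast\|\bigr)\bigl(\|z^k-z^\ast\|+\|z^{k+1}-z^\ast\|\bigr).
\end{align*}
The reverse triangle inequality bounds the first factor by $\|z^{k+1}-z^k\|$, and Fej\'er monotonicity (Fact~\ref{fact:averagedconvergence}) bounds the second factor by $2\|z^0-z^\ast\|$, so $\lambda_k(S_f(x_f^k,x^\ast)+S_g(x_g^k,x^\ast))=O(\|z^{k+1}-z^k\|)$. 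The hypothesis $\underline{\tau}>0$ forces $\underline{\lambda}>0$, and Fact~\ref{fact:averagedconvergence} gives $\|\TPRS z^k-z^k\|^2=o(1/(k+1))$; combined with $\|z^{k+1}-z^k\|=\lambda_k\|\TPRS z^k-z^k\|$ and division by $\underline{\lambda}$, this yields the claimed $o(1/\sqrt{k+1})$ rate.
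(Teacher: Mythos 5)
Your proposal is correct and follows essentially the same route as the paper: the key inequality \eqref{prop:sumauxilliaryterms:eq:main} is obtained exactly as in the paper by combining the scaled lower fundamental inequality with the upper one so that the cross term cancels via $z^{k+1}-z^k=2\lambda_k(x_f^k-x_g^k)$, and Parts~\ref{prop:sumauxilliaryterms:part:1} and~\ref{prop:sumauxilliaryterms:part:2} are handled identically (telescoping summability, Fact~\ref{lem:sumsequence}, and Jensen's inequality). For Part~\ref{prop:sumauxilliaryterms:part:3} you replace the paper's cosine-rule computation (which optimizes over a virtual relaxation parameter) with a difference-of-squares factorization plus the reverse triangle inequality and Fej\'er monotonicity, but this is only a cosmetic variation: both arguments produce the bound $S_{f}(x_f^{k}, x^\ast)+S_{g}(x_g^{k}, x^\ast)\leq \|z^0-z^\ast\|\,\|\TPRS z^k-z^k\|/(4\gamma)$ and then invoke the $o(1/(k+1))$ FPR rate.
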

\begin{proof}
By assumption, the relaxation parameters satisfy $\lambda_k \leq 1$. Therefore, Equation~\eqref{prop:sumauxilliaryterms:eq:main} is a consequence of the following inequalities:
\begin{align*}
8\gamma\lambda_k(S_{f}(x_f^k, x^\ast) + S_g(x_g^k, x^\ast)) &\stackrel{\eqref{prop:DRSlower:eq:main},\eqref{eq:DRSmainidentity2}}{\leq} 4\gamma\lambda_k(f(x_f^k) + g(x_g^k) - f(x^\ast) - g(x^\ast) + S_{f}(x_f^k, x^\ast) + S_g(x_g^k, x^\ast))  \\
&\hspace{30pt}- 2\dotp{z^{k} - z^{k+1}, z^\ast - x^\ast} \\
&\stackrel{\eqref{prop:DRSupper:eq:aux}}{\leq}  \|z^{k} - z^\ast\|^2 - \|z^{k+1} - z^\ast\|^2 + \left(1 - \frac{1}{\lambda_k}\right)\|z^{k+1} - z^k\|^2 \\
&\leq  \|z^{k} - z^\ast\|^2 - \|z^{k+1} - z^\ast\|^2.  \numberthis \label{eq:upperboundonstrongterms}
\end{align*}
Note that the sum of Equation~\eqref{eq:upperboundonstrongterms} over all $k$ is indeed bounded by $\|z^0 - z^\ast\|^2$. Thus, Part~\ref{prop:sumauxilliaryterms:part:1} follows from Fact~\ref{lem:sumsequence}, and Part~\ref{prop:sumauxilliaryterms:part:2} follows from Jensen's inequality applied to $\|\cdot \|^2.$

Fix $k \geq 0$, let $z_\lambda = (\TPRS)_{\lambda}z^k$ for $\lambda \in [0, 1]$, and note that $z_{\lambda} - z^k = \lambda( \TPRS z^k - z^k)$. Fact~\ref{fact:averagedconvergence} shows that $\|z_{\lambda} - z^\ast\| \leq \|z^k - z^\ast\|$ (Equation~\eqref{eq:fejer}) and that the sequence $(\|z^j - z^\ast\|)_{j \geq 0}$ is nonincreasing. Therefore, Part~\ref{prop:sumauxilliaryterms:part:3} is a consequence of the cosine rule, Fact~\ref{fact:averagedconvergence}, Equation~\eqref{prop:sumauxilliaryterms:eq:main}, and the following inequalities:
\begin{align*}
S_{f}(x_f^{k}, x^\ast) + S_g(x_g^{k}, x^\ast) &\stackrel{\eqref{prop:sumauxilliaryterms:eq:main}}{\leq} \inf_{\lambda \in [0, 1]} \frac{1}{8\gamma \lambda}\left( \|z^{k} - z^\ast\|^2 - \|z_\lambda - z^\ast\|^2 + \left(1 - \frac{1}{\lambda}\right)\|z^{k} - z_\lambda\|^2\right)\\
&\stackrel{\eqref{eq:cosinerule}}{=} \inf_{\lambda \in [0,1]}\frac{1}{8\gamma \lambda}\left(2\dotp{z_\lambda - z^\ast, z^k - z_\lambda} +2\left(1 - \frac{1}{2\lambda}\right) \|z_\lambda - z^k\|^2\right) \\
&\leq \frac{\|z_{1/2} - z^\ast\|\|z^k - z_{1/2}\|}{2\gamma} \leq \frac{\|z^0 - z^\ast\|\|z^k - z_{1/2}\|}{2\gamma} \stackrel{\eqref{cor:DRSaveragedconvergence:eq:main}}{\leq} \frac{\|z^0 - z^\ast\|^2}{4\gamma \sqrt{\underline{\tau}(k+1)}}.
\end{align*}
The little-$o$ convergence rate follows because $S_{f}(x_f^{k}, x^\ast) + S_g(x_g^{k}, x^\ast)$ is bounded by a multiple of the square root of the FPR.
\qed\end{proof}

It is not clear whether the ``best iterate" convergence results of Theorem~\ref{prop:sumauxilliaryterms} can be improved to a convergence rate for the entire sequence because the values $S_f(x_f^k, x)$ and $S_g(x_g^k, x)$ are not necessarily monotonic.

\section{Lipschitz derivatives}\label{sec:lipschitzderivatives}

In this section, we study the convergence rate of relaxed PRS under the following assumption.
\begin{assump}
The gradient of at least one of the functions $f$ and $g$ is Lipschitz.
\end{assump}

Throughout this section, Fact~\ref{lem:sumsequence} will be used repeatedly to deduce the convergence rates of summable sequences. In general, because we can only deduce the summability and not the monotonicity of the objective errors in Problem~\ref{eq:simplesplit}, we can only show that the smallest objective error after $k$ iterations is of order $o(1/(k+1))$.  If $\lambda_k \equiv 1/2$, the implicit stepsize parameter $\gamma$ is small enough, and the gradient of $g$ is $(1/\beta)$-Lipschitz, we show that a sequence that dominates the objective error is monotonic and summable, and deduce a convergence rate for the entire sequence.

\subsection{The general case: best iterate convergence rate}\label{sec:lipschitzderivatives:part:general}

%

The next proposition bounds the objective error by a summable sequence. See Appendix~\ref{app:lipschitzderivatives:part:general} for a proof.
\begin{proposition}[Fundamental inequality under Lipschitz assumptions]\label{prop:DRSupperfglipschitz}
Let $z \in \cH$,  let $z^+ = (\TPRS)_{\lambda}z$, let $z^\ast$ be a fixed point of $\TPRS$, and let $x^\ast = \prox_{\gamma g}(z^\ast)$.  If $\nabla f$ (respectively $\nabla g$) is $({1}/{\beta})$-Lipschitz, then for $x = x_g$ (respectively $x = x_f$),
\begin{align*}
&4\gamma\lambda\big(f(x) + g(x) - f(x^\ast) - g(x^\ast)\big)  \\
&\leq\begin{cases}
\|z - z^\ast\|^2 - \|z^+ - z^\ast\|^2 +\left(1 + \frac{1}{2\lambda}\left( \frac{\gamma}{\beta} - 1\right)\right)\|z - z^+\|^2, & \mbox{if } \gamma \leq \beta; \\
\left(1 + \frac{(\gamma - \beta)}{2\beta}\right)\left(\|z - z^\ast\|^2 - \|z^+ - z^\ast\|^2 + \|z - z^+\|^2\right), & \mbox{otherwise.}
\end{cases}
\end{align*}
\end{proposition}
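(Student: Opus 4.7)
The plan is to start from the auxiliary upper fundamental inequality~\eqref{prop:DRSupper:eq:aux} applied at $x = x^{\ast}$, and convert the mixed quantity $f(x_f) + g(x_g)$ on the left-hand side into $(f+g)$ evaluated at a single point by invoking the standard descent lemma for the Lipschitz-gradient summand. I describe the case where $\nabla f$ is $(1/\beta)$-Lipschitz and $x = x_g$; the case for $g$ is handled symmetrically by exchanging the roles of $(f, x_f)$ and $(g, x_g)$. Because $\nabla f$ is $(1/\beta)$-Lipschitz, the term $S_f(x_f, x^{\ast})$ in~\eqref{prop:DRSupper:eq:aux} is at least $(\beta/2)\|\nabla f(x_f) - \nabla f(x^{\ast})\|^2$; I retain this lower bound and drop $S_g(x_g, x^{\ast}) \geq 0$.

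Next, the descent lemma gives $f(x_g) - f(x_f) \leq \dotp{\nabla f(x_f), x_g - x_f} + \|x_g - x_f\|^2/(2\beta)$; by Lemma~\ref{prop:DRSmainidentity}, $x_g - x_f = (z - z^{+})/(2\lambda)$, so $\|x_g - x_f\|^2 = \|z - z^{+}\|^2/(4\lambda^2)$. Multiplying by $4\gamma\lambda$ and adding to the bound from the previous step replaces $f(x_f)$ by $f(x_g)$, while introducing $2\gamma\dotp{\nabla f(x_f), z - z^{+}} + \gamma\|z - z^{+}\|^2/(2\beta\lambda)$ on the right. Using the optimality relation $\nabla f(x^{\ast}) + \tnabla g(x^{\ast}) = 0$ together with $z^{\ast} - x^{\ast} = \gamma\tnabla g(x^{\ast}) = -\gamma\nabla f(x^{\ast})$, the two cross terms combine into a single inner product $2\gamma\dotp{\nabla f(x_f) - \nabla f(x^{\ast}), z - z^{+}}$, which I then control by Young's inequality $2\gamma\dotp{u, v} \leq \alpha\gamma^2\|u\|^2 + \alpha^{-1}\|v\|^2$ with a parameter $\alpha > 0$ chosen so that $\alpha\gamma^2\|\nabla f(x_f) - \nabla f(x^{\ast})\|^2$ is absorbed by the $2\gamma\lambda\beta\|\nabla f(x_f) - \nabla f(x^{\ast})\|^2$ contribution on the left.

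When $\gamma \leq \beta$, the choice $\alpha = 2\lambda$ is admissible since $\alpha\gamma^2 = 2\lambda\gamma^2 \leq 2\gamma\lambda\beta$, and it contributes the clean coefficient $\alpha^{-1} = 1/(2\lambda)$ to $\|z - z^{+}\|^2$; summing this with the pre-existing coefficients $1 - 1/\lambda$ (from~\eqref{prop:DRSupper:eq:aux}) and $\gamma/(2\beta\lambda)$ (from the descent step) yields exactly $1 + (1/(2\lambda))(\gamma/\beta - 1)$, matching the first branch of the stated bound.

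The delicate case, and the main obstacle, is $\gamma > \beta$. Here $\alpha = 2\lambda$ is no longer admissible; the largest admissible value $\alpha = 2\lambda\beta/\gamma$ still cancels the gradient-squared term but contributes $\alpha^{-1} = \gamma/(2\lambda\beta) > 1/(2\lambda)$, pushing the coefficient of $\|z - z^{+}\|^2$ above $1$ and breaking the clean telescoping structure of the first branch. To recover the stated form $(1 + (\gamma - \beta)/(2\beta))(\|z - z^{\ast}\|^2 - \|z^{+} - z^{\ast}\|^2 + \|z - z^{+}\|^2)$ one repackages the bound using the cosine-rule identity $\|z - z^{\ast}\|^2 - \|z^{+} - z^{\ast}\|^2 + \|z - z^{+}\|^2 = 2\dotp{z - z^{\ast}, z - z^{+}} \geq 0$, so that the common multiplier $1 + (\gamma - \beta)/(2\beta)$ inflates the three nonnegative pieces uniformly to dominate the raw right-hand side produced by the previous steps. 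Matching constants exactly here is the central bookkeeping hurdle.
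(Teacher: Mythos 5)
Your treatment of the case $\gamma \leq \beta$ is correct and is, in substance, the paper's own argument: the exact polarization identity the paper uses on the combined cross term $2\dotp{z-z^+,z^\ast-x^\ast}+4\gamma\lambda\dotp{x_g-x_f,\nabla f(x_f)}$ produces precisely the coefficients $2\lambda\gamma^2$ on $\|\nabla f(x_f)-\nabla f(x^\ast)\|^2$ and $1/(2\lambda)$ on $\|z-z^+\|^2$ (plus a dropped negative term), which is exactly what your Young's inequality with $\alpha=2\lambda$ delivers, and the bookkeeping then yields $1+\tfrac{1}{2\lambda}(\tfrac{\gamma}{\beta}-1)$ as you say.

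The gap is in the case $\gamma>\beta$. Your choice $\alpha=2\lambda\beta/\gamma$ dumps the entire excess onto $\|z-z^+\|^2$ with coefficient $\gamma/(2\lambda\beta)$, giving a total coefficient $1+\tfrac{\gamma-\beta}{\beta\lambda}$ on $\|z-z^+\|^2$ while leaving coefficient $1$ on $\|z-z^\ast\|^2-\|z^+-z^\ast\|^2$. To "repackage" this into the stated bound $\bigl(1+\tfrac{\gamma-\beta}{2\beta}\bigr)\bigl(\|z-z^\ast\|^2-\|z^+-z^\ast\|^2+\|z-z^+\|^2\bigr)$ you would need
\begin{align*}
\left(\tfrac{2}{\lambda}-1\right)\|z-z^+\|^2 \;\leq\; \|z-z^\ast\|^2-\|z^+-z^\ast\|^2,
\end{align*}
but the only available control is the Fej\'er inequality~\eqref{eq:fejer}, which gives the right-hand side only $\geq \tfrac{1-\lambda}{\lambda}\|z-z^+\|^2$, strictly weaker than the $\tfrac{2-\lambda}{\lambda}\|z-z^+\|^2$ you need; note also that the three "pieces" are not individually nonnegative (only their sum is), so uniform inflation does not dominate a larger coefficient concentrated on $\|z-z^+\|^2$. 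The paper avoids this by \emph{not} absorbing the cross term fully: it keeps $\alpha=2\lambda$ regardless, leaving an explicit excess $2\gamma\lambda(\gamma-\beta)\|\nabla f(x_f)-\nabla f(x^\ast)\|^2$, and then bounds that excess through $S_f(x_f,x^\ast)\geq\tfrac{\beta}{2}\|\nabla f(x_f)-\nabla f(x^\ast)\|^2$ combined with the auxiliary term bound~\eqref{prop:sumauxilliaryterms:eq:main}, which yields $\tfrac{\gamma-\beta}{2\beta}\bigl(\|z-z^\ast\|^2-\|z^+-z^\ast\|^2+(1-\tfrac{1}{\lambda})\|z-z^+\|^2\bigr)$; the $-\tfrac{\gamma-\beta}{2\beta\lambda}\|z-z^+\|^2$ inside this expression exactly cancels the $+\tfrac{\gamma-\beta}{2\beta\lambda}\|z-z^+\|^2$ from the first branch's coefficient, producing the clean common multiplier. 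That second invocation of~\eqref{prop:sumauxilliaryterms:eq:main} is the missing idea; without it your argument proves a valid but different (and not obviously comparable) inequality rather than the one stated.
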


Proposition~\ref{prop:DRSupperfglipschitz} shows that the the objective error is summable whenever $f$ or $g$ is Lipschitz and $(\lambda_j)_{j \geq 0}$ is chosen properly.  A direct application of Fact~\ref{lem:sumsequence} yields a convergence rate for the objective error.  Depending on the choice of $\gamma$ and $(\lambda_j)_{j \geq 0}$, we can achieve several different rates.  In the following Theorem we only analyze a few such choices.

\begin{theorem}[Best iterate convergence under Lipschitz assumptions]\label{thm:lipschitzbest}
Let $z \in \cH$,  
let $z^\ast$ be a fixed point of $\TPRS$, and let $x^\ast = \prox_{\gamma g}(z^\ast)$. Suppose that $\underline{\tau} = \inf_{j \geq 0} \lambda_j(1-\lambda_j) > 0$, and let $\underline{\lambda} = \inf_{j \geq 0}\lambda_j$.  If $\nabla f$ (respectively $\nabla g$) is $({1}/{\beta})$-Lipschitz, and $x^k = x_g^k$ (respectively $x^k = x_f^k$), then
\begin{align*}
\min_{i = 0, \cdots, k}\left\{f(x^{i}) + g(x^{i}) - f(x^\ast) - g(x^\ast)\right\} = o\left(\frac{1}{k+1}\right).
\end{align*}
\end{theorem}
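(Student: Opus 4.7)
The plan is to apply the fundamental inequality in Proposition~\ref{prop:DRSupperfglipschitz} at each iteration, telescope, and combine with summability of the FPR to show that $\sum_k \lambda_k a_k < \infty$ where $a_k := f(x^k)+g(x^k)-f(x^\ast)-g(x^\ast)$; then invoke Fact~\ref{lem:sumsequence}(\ref{lem:sumsequence:part:nonmono}) to convert this summability into the claimed best-iterate $o(1/(k+1))$ rate.

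First I would observe that in both cases of Proposition~\ref{prop:DRSupperfglipschitz} (whether $\gamma \leq \beta$ or $\gamma > \beta$), the coefficient multiplying the FPR term $\|z^{k+1}-z^k\|^2$ is bounded above by a constant $C$ independent of $k$. For $\gamma \leq \beta$ the factor $1+(1/(2\lambda_k))(\gamma/\beta-1)$ is at most $1$ since $\gamma/\beta-1\le 0$; for $\gamma > \beta$ the factor $1+(\gamma-\beta)/(2\beta)$ does not depend on $k$ at all. Writing the inequality at $(z^k,z^{k+1})$ and telescoping the $\|z^k-z^\ast\|^2-\|z^{k+1}-z^\ast\|^2$ differences over $k=0,\dots,N$ gives
\begin{align*}
4\gamma \sum_{k=0}^N \lambda_k a_k \leq \|z^0 - z^\ast\|^2 + C \sum_{k=0}^N \|z^{k+1}-z^k\|^2.
\end{align*}

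Next I would bound the FPR sum via Fact~\ref{fact:averagedconvergence}(5): $\sum_k \lambda_k(1-\lambda_k)\|\TPRS z^k-z^k\|^2 \leq \|z^0-z^\ast\|^2$. The hypothesis $\underline{\tau}>0$ upgrades this to $\sum_k \|\TPRS z^k - z^k\|^2 \leq \|z^0-z^\ast\|^2/\underline{\tau}$, and since $\lambda_k \leq 1$ we get $\sum_k \|z^{k+1}-z^k\|^2 = \sum_k \lambda_k^2 \|\TPRS z^k-z^k\|^2 < \infty$. Passing $N\to\infty$ then yields $\sum_{k=0}^\infty \lambda_k a_k < \infty$.

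Finally, note $a_k \geq 0$ because $x^\ast$ minimizes $f+g$. Applying Fact~\ref{lem:sumsequence}(\ref{lem:sumsequence:part:nonmono}) to the nonnegative sequence $(a_k)$ with weights $(\lambda_k)$ gives that $(a_{\kbest})$ is nonincreasing and $a_{\kbest} = o(1/(\Lambda_k - \Lambda_{\lceil k/2 \rceil}))$. Because $\underline{\tau}>0$ forces $\lambda_k$ into a closed subinterval of $(0,1)$, in particular $\underline{\lambda} := \inf_j \lambda_j >0$, so $\Lambda_k - \Lambda_{\lceil k/2 \rceil} \geq \underline{\lambda}\lfloor k/2 \rfloor$ grows linearly in $k$, producing the desired $o(1/(k+1))$ rate. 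The only minor obstacle is the uniform bounding of the FPR coefficient across the two regimes $\gamma \leq \beta$ and $\gamma > \beta$ in Proposition~\ref{prop:DRSupperfglipschitz}; once this is in hand, the rest is a mechanical assembly of Proposition~\ref{prop:DRSupperfglipschitz}, Fact~\ref{fact:averagedconvergence}, and Fact~\ref{lem:sumsequence}.
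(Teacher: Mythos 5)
Your proposal is correct and follows essentially the same route as the paper: Proposition~\ref{prop:DRSupperfglipschitz} supplies the telescoping upper bound on $4\gamma\lambda_k\bigl(f(x^k)+g(x^k)-f(x^\ast)-g(x^\ast)\bigr)$, Fact~\ref{fact:averagedconvergence} gives summability of the FPR terms (the paper phrases this via $\inf_j \tfrac{1-\lambda_j}{\lambda_j}$ rather than via $\underline{\tau}$ and $\lambda_k\le 1$, but the content is the same), and Part~\ref{lem:sumsequence:part:nonmono} of Fact~\ref{lem:sumsequence} converts the resulting summability of the nonnegative weighted objective errors into the $o(1/(k+1))$ best-iterate rate. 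Your handling of the uniform bound on the FPR coefficient across the two regimes $\gamma\le\beta$ and $\gamma>\beta$ matches the case analysis the paper records in its explicit constants, so there is no gap.
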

\begin{proof}
Fact~\ref{fact:averagedconvergence} proves the following bound:
\begin{align*}
\inf_{j \geq 0} \frac{1-\lambda_j}{\lambda_j}\sum_{i=0}^\infty \|z^k - z^{k+1}\|^2 \leq  \sum_{i=0}^\infty \tau_i\|\TPRS z^i - z^i\|^2 \leq \|z^0 - z^\ast\|^2.
\end{align*}
Therefore, the proof follows from Part~\ref{lem:sumsequence:part:nonmono} of Lemma~\ref{lem:sumsequence} applied to the  summable upper bound in Proposition~\ref{prop:DRSupperfglipschitz}, which bounds the objective error. Note that under different choices of $(\lambda_j)_{j \geq 0}$ and $\gamma$, we get the bounds:
\begin{align*}
&f(x^{\kbest}) + g(x^{\kbest}) - f(x^\ast) - g(x^\ast) \\
&\leq \frac{\|z^0 - z^\ast\|^2}{4\gamma\underline{\lambda}(k+1)} \times \begin{cases}
1, & \mbox{if } \gamma \leq \beta \mbox{ and } (\lambda_j)_{j \geq 0} \subseteq \left[ \underline{\lambda}, \frac{1}{2}\left(1- \frac{\gamma}{\beta}\right)\right];\\
1 + 1/\left(\inf_{j \geq 0} \frac{1-\lambda_j}{\lambda_j}\right), & \mbox{if } \gamma \leq \beta; \\
\left(1 + \frac{(\gamma - \beta)}{2\beta}\right)\left(1 +  1/\left(\inf_{j \geq 0} \frac{1-\lambda_j}{\lambda_j}\right)\right), & \mbox{otherwise.}
\end{cases}
\end{align*}
\qed\end{proof}

This result should be compared with the known  convergence properties of the FBS algorithm, which has order $o(1/(k+1))$ for a bounded $\gamma$, but may even fail to converge if $\gamma$ is too large. See Section~\ref{section:FBSpractical} for more on the distinction between FBS and relaxed PRS.

\subsection{Constant relaxation and better rates}\label{sec:constantrelaxationandbetterates}

In this section, we study the convergence rate of DRS under the assumption
\begin{assump}
The function $g$ is differentiable on $\cH$, the gradient $\nabla g$ is $({1}/{\beta})$-Lipschitz, and the sequence of relaxation parameters $(\lambda_j)_{j \geq 0}$ is constant and equal to ${1}/{2}$.
\end{assump}
With these assumptions, we will show that for a special choice of $\theta^\ast$ (Lemma~\ref{lem:howtochoosea}) and for $\gamma$ small enough, the following sequence is monotonic and summable (Propositions~\ref{prop:lipschitzmono} and~\ref{prop:lipschitzsum}):
\begin{align}\label{eq:monoandsum}
\left(2\gamma\left(f(x_f^j) + g(x_f^j) - f(x) - g(x)\right) +\theta^\ast\gamma^2 \| \nabla g(x_g^{j+1}) - \nabla g(x_g^j)\|^2+ \frac{(1-\theta^\ast)\gamma^2}{\beta^2}\|x_g^{j+1} - x_g^{j}\|^2\right)_{j \geq 0}.
\end{align}
We then use Fact~\ref{lem:sumsequence} to deduce $f(x_f^j) + g(x_f^j) - f(x) - g(x) = o(1/(k+1))$.

There are several other simpler monotonic and summable sequences that dominate the objective error. For example, if we choose $\theta^\ast = 1$, we can drop the last term in Equation~\eqref{eq:monoandsum}, but we can no longer use this sequence to help deduce the convergence rate of the FPR in Theorem~\ref{thm:differentiableFPR}. Thus, we choose to analyze the slightly complicated sequence in Equation~\eqref{eq:monoandsum} in order to provide a unified analysis for all results in this section.

We are now ready to deduce the objective error convergence rate for the DRS algorithm when $\nabla g$ is Lipschitz.  Our bounds show that $$\mbox{DRS is at least as fast as FBS whenever $\gamma$ is small enough.}$$  Additionally, we show that the convergence rate of the best iterate has essentially the same constant for a large range of $\gamma$. When $\gamma$ is large, the best iterate still enjoys the convergence rate $o(1/(k+1))$, albeit with a larger constant (Theorem~\ref{thm:lipschitzbest}). The rates we derive are the best possible for this algorithm, as shown by \cite[Theorem 12]{davis2014convergence}.

Because each step of the relaxed PRS algorithm is generated by a proximal operator, it may seem strange that the choice of stepsize $\gamma$ affects the convergence rate of relaxed PRS.  This is certainly not the case for the proximal point algorithm, which achieves an $o(1/(k+1))$ convergence rate by Fact~\ref{lem:sumsequence}.  A possible explanation is that the reflection operator of a differentiable function is the composition of averaged operators
\begin{align*}
\refl_{\gamma g} = (I - \gamma \nabla g) \circ \prox_{\gamma g}
\end{align*}
whenever $\gamma < 2\beta$, and, therefore, it is averaged \cite[Propositions 4.32 and~4.33]{bauschke2011convex}. Thus, although $\TPRS$ is not necessarily averaged when $f$ or $g$ is differentiable, the individual reflection operators enjoy a stronger contraction property \cite[Proposition 4.25]{bauschke2011convex} as long as $\gamma$ is small enough.  As soon as $\gamma$ is too large, we seem to lose monotonicity of various sequences that arise in our analysis.

\begin{theorem}[Differentiable function convergence rate]\label{thm:differentiableobjective}
Let $\rho \approx 2.2056$ be the positive real root of $x^3 - 2x^2 - 1$. Then
\begin{align*}
\min_{i=0, \cdots, k} \{f(x_f^{i}) &+ g(x_f^{i}) - f(x^\ast) - g(x^\ast)\} \\
&\leq \frac{1}{2\gamma(k+1)}
\begin{cases}
\|x_g^0 - x^\ast\|^2, & \mbox{if $\gamma < \rho\beta$};\\
\|x_g^0 - x^\ast\|^2 + \frac{1}{\beta^2+ \gamma^2}\left( \frac{\gamma^3}{\beta} - 2\gamma \beta - \beta^2\right) \|z^0 - z^\ast\|^2, & \mbox{otherwise};
\end{cases}
\end{align*}
and $\min_{i=0, \cdots, k} \{f(x_f^{i}) + g(x_f^{i}) - f(x^\ast) - g(x^\ast)\} = o\left(1/(k+1)\right).$
Furthermore, if $\kappa$ ($\approx 1.24698$) is the positive root of $x^3 + x^2 -2x - 1$, and $\gamma < \kappa \beta$, then
\begin{align*}
f(x_f^k) + g(x_f^k) - f(x^\ast) - g(x^\ast) \leq \frac{\|x_g^0 - x^\ast\|^2}{2\gamma(k+1)} && \mathrm{and} &&
f(x_f^k) + g(x_f^k) - f(x^\ast) - g(x^\ast) = o\left(\frac{1}{k+1}\right).
\end{align*}
\end{theorem}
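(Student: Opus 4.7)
The plan is to apply Fact~\ref{lem:sumsequence} to the Lyapunov-type sequence introduced in Equation~\eqref{eq:monoandsum}, which by construction dominates the quantity $2\gamma(f(x_f^j) + g(x_f^j) - f(x^\ast) - g(x^\ast))$. Consequently, any convergence rate obtained for this dominating sequence transfers immediately to the objective error, and the proof reduces to verifying two inputs: (i) summability of the Lyapunov sequence for every $\gamma > 0$ with constants matching those in the statement, and (ii) monotonicity of the same sequence under the stricter condition $\gamma < \kappa\beta$.

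The summability input comes from Proposition~\ref{prop:lipschitzsum}, applied with the optimal $\theta^\ast$ furnished by Lemma~\ref{lem:howtochoosea}. The resulting upper bound on the infinite sum should split as a constant times $\|x_g^0 - x^\ast\|^2$ plus a correction term whose coefficient is a rational function of $\gamma$ and $\beta$; the cubic threshold $\rho$ (the positive root of $x^3 - 2x^2 - 1$) is precisely the value of $\gamma/\beta$ above which this correction coefficient becomes positive, forcing the $\|z^0 - z^\ast\|^2$ contribution in the second case of the theorem. Once this bound is in hand, I would apply Part~\ref{lem:sumsequence:part:nonmono} of Fact~\ref{lem:sumsequence} (with $\lambda_j \equiv 1$ and $\Lambda_k = k+1$) to the objective-error sequence, which delivers both the claimed $1/(k+1)$ bound on the best iterate and its $o(1/(k+1))$ refinement.

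For the second assertion (the full nonergodic rate under $\gamma < \kappa\beta$), I would invoke Proposition~\ref{prop:lipschitzmono}, which establishes monotonicity of the Lyapunov sequence precisely when $\gamma/\beta$ lies below the positive root $\kappa$ of $x^3 + x^2 - 2x - 1$. Since $\kappa < \rho$, the summability bound from Proposition~\ref{prop:lipschitzsum} is in its clean form $\|x_g^0 - x^\ast\|^2$ throughout this range, so Part~\ref{lem:sumsequence:part:main} of Fact~\ref{lem:sumsequence} applies directly to the actual sequence (not just the best iterate), yielding both $f(x_f^k) + g(x_f^k) - f(x^\ast) - g(x^\ast) \leq \|x_g^0 - x^\ast\|^2/(2\gamma(k+1))$ and its little-$o$ refinement.

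The main obstacle will be pinpointing the algebraic origin of the two distinct cubic thresholds. They both arise from choosing $\theta^\ast$ to control the sign of quadratic expressions in $\gamma/\beta$, but the phenomena they govern are different: summability of the Lyapunov sequence tolerates the larger threshold $\rho$, while monotonicity of its successive differences demands the strictly tighter threshold $\kappa$. The bookkeeping heart of the argument is verifying that the single optimal $\theta^\ast$ produced by Lemma~\ref{lem:howtochoosea} is simultaneously compatible with Proposition~\ref{prop:lipschitzmono} and Proposition~\ref{prop:lipschitzsum}, and that the resulting constants collapse precisely to the clean closed forms stated in the theorem.
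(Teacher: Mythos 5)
Your treatment of the second assertion (the full nonergodic rate for $\gamma < \kappa\beta$) is exactly the paper's argument: Proposition~\ref{prop:lipschitzmono} with the $\theta^\ast$ of Lemma~\ref{lem:howtochoosea} gives monotonicity of the sequence in Equation~\eqref{eq:monoandsum}, Proposition~\ref{prop:lipschitzsum} bounds its sum by $\|x_g^0 - x^\ast\|^2$, and Fact~\ref{lem:sumsequence} finishes. No issue there.

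The first assertion is where your plan has a genuine gap. You propose to obtain the best-iterate bound by applying Fact~\ref{lem:sumsequence} to the Lyapunov sequence~\eqref{eq:monoandsum} via Proposition~\ref{prop:lipschitzsum}, and you assert that $\rho$ is the threshold at which that proposition's correction coefficient changes sign. It is not: the case split in Proposition~\ref{prop:lipschitzsum} occurs at $\gamma = \kappa\beta$, and its correction coefficient is $\tfrac{1}{\beta^2+\gamma^2}\left(\tfrac{\gamma^3}{\beta} - 2\gamma\beta + \gamma^2 - \beta^2\right)$, whose sign is governed by the cubic $x^3 + x^2 - 2x - 1$ (root $\kappa$), not by $x^3 - 2x^2 - 1$ (root $\rho$). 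The extra $+\gamma^2$ is exactly the price of carrying the $\theta\gamma^2\|\nabla g(x_g^{k+1}) - \nabla g(x_g^k)\|^2$ term in the Lyapunov sequence. So your route proves the clean bound $\|x_g^0-x^\ast\|^2/(2\gamma(k+1))$ only for $\gamma < \kappa\beta$ and yields a strictly larger correction term otherwise --- i.e., a weaker statement than the theorem on the range $\kappa\beta \le \gamma < \rho\beta$ and beyond. The paper instead drops the Lyapunov augmentation entirely for this part: it evaluates Proposition~\ref{prop:fundamentaldiff} directly at $x = x^\ast$, converts the leftover $-\|x_g^k - x_g^{k+1}\|^2$ into $-\beta^2\|\nabla g(x_g^{k+1}) - \nabla g(x_g^k)\|^2$ so that the gradient-difference coefficient becomes $\tfrac{\gamma^3}{\beta} - 2\gamma\beta - \beta^2$ (nonpositive precisely when $\gamma \le \rho\beta$), telescopes, and for $\gamma \ge \rho\beta$ controls the residual via the gradient-sum bound~\eqref{eq:Lipschitzgradientsum}. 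You need this separate, $\theta$-free argument to reach the stated constants and the $\rho\beta$ threshold; the single-$\theta^\ast$ bookkeeping you describe cannot deliver both cubics.
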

\begin{proof}
To prove the ``$\kbest$" bounds, rearrange the upper inequality in Equation~\eqref{prop:fundamentaldiff:eq:main} to
\begin{align*}
2\gamma&(f(x_f^k) + g(x_f^k) - f(x^\ast) - g(x^\ast))  \\
&\leq \|x_g^k - x^\ast\| - \|x_g^{k+1} - x^\ast\|^2 + \left(\frac{\gamma^3}{\beta} - 2\gamma \beta \right)\|\nabla g(x_g^{k+1}) - \nabla g(x_g^k)\|^2  - \|x_g^k - x_{g}^{k+1}\|^2 \\
&\leq \|x_g^k - x^\ast\| - \|x_g^{k+1} - x^\ast\|^2 + \left(\frac{\gamma^3}{\beta} - 2\gamma \beta - \beta^2\right)\|\nabla g(x_g^{k+1}) - \nabla g(x_g^k)\|^2, \numberthis \label{thm:differentiableobjective:bound1}
\end{align*}
where the last line follows from the bound $-\|x_g^k - x_{g}^{k+1}\|^2 \leq -\beta^2 \|\nabla g(x_g^{k}) - \nabla g(x_g^{k+1})\|^2.$ Note that ${\gamma^3}/{\beta} - 2\gamma \beta - \beta^2  \leq 0$ if, and only if,  $\gamma \leq \rho\beta$ where $\rho$ is the positive root of $x^3 - 2x^2 - 1$. Therefore, the result follows by summing Equation~\eqref{thm:differentiableobjective:bound1} and applying~Fact~\ref{lem:sumsequence}.

If $\gamma \leq \kappa\beta$, then $( {\gamma^3}/{\beta} - 2\gamma \beta + \theta^\ast\gamma^2) \leq 0$ and $(1-\theta^\ast)\gamma^2/\beta^2 \leq 1$. Therefore, Equation~\eqref{prop:lipschitzmono:eq:main} shows that the sequence
\begin{align*}
\left(2\gamma(f(x_f^j) + g(x_f^j) - f(x) - g(x)) +\theta^\ast\gamma^2 \| \nabla g(x_g^{j+1}) - \nabla g(x_g^j)\|^2+ \frac{(1-\theta^\ast)\gamma^2}{\beta^2}\|x_g^{j+1} - x_g^{j}\|^2\right)_{j \geq 0}
\end{align*}
is monotonic. In addition, Equation~\eqref{prop:lipschitzsum:eq:main} shows the sum of this sequence is bounded by $\|x_g^0 - x^\ast\|^2$.  Therefore, the result follows by~Fact~\ref{lem:sumsequence}.
\qed\end{proof}

It was recently shown that the FPR convergence rate for the FBS algorithm is $o(1/(k+1)^2))$ \cite[Theorem 3]{davis2014convergence}.  We complement this result by showing the same is true for DRS whenever $\gamma$ is small enough.  This rate is optimal by \cite[Theorem 12]{davis2014convergence}.

\begin{theorem}[Differentiable function FPR rate]\label{thm:differentiableFPR}
Suppose that $\gamma < \kappa \beta$ where $\kappa$ ($\approx 1.24698$) is the positive root of $x^3 + x^2 -2x - 1$. Then for all $k \geq 1$, we have
\begin{align}\label{eq:differentiableFPR}
\|z^k - z^{k+1}\|^2 \leq \frac{\beta^2\|x_g^0 - x^\ast\|^2}{k^2\left(1+{\gamma}/{\beta}\right)^2\left(\beta^2 -{\gamma^2}/{\kappa^2}\right)} && \mathrm{and} &&  \|z^k - z^{k+1}\|^2 &= o\left(\frac{1}{k^2}\right).
\end{align}
\end{theorem}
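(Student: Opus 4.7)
Proof plan: The target $O(1/k^{2})$ and $o(1/k^{2})$ rates will be deduced by first showing summability of the weighted sequence $\sum_{k}(k+1)\|z^{k+1}-z^{k}\|^{2}<\infty$, and then combining this with monotonicity of the FPR (Fact~\ref{fact:averagedconvergence} part~\ref{fact:averagedconvergence:eq:FPRmono}) via the standard tail-sum argument: if $(a_j)_{j\ge 0}$ is nonincreasing and $\sum_{j}(j+1)a_{j}<\infty$, then $a_k\sum_{i=0}^k(i+1)\leq\sum_{i=0}^k(i+1)a_i$ yields $a_k=O(1/k^{2})$, while the $o(1/k^{2})$ version follows because the tail of a convergent series vanishes. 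The summability upgrade will use Fact~\ref{lem:sumsequence} part~\ref{lem:sumsequence:part:b}, which converts a telescoping inequality $\lambda_k a_k\leq b_k-b_{k+1}$ with $\sum_k b_k<\infty$ into $\sum_i (i+1)\lambda_i a_i\leq \sum_i b_i$.

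The first step is to reduce the FPR to the proximal displacement $\|x_g^{k+1}-x_g^{k}\|^{2}$, which is the quantity controlled by the monotonic sequence of Equation~\eqref{eq:monoandsum}. Since $g$ is differentiable, Lemma~\ref{prop:DRSmainidentity} gives $z=x_g+\gamma\nabla g(x_g)$, so the triangle inequality together with the $(1/\beta)$-Lipschitzness of $\nabla g$ produces
\begin{align*}
\|z^{k+1}-z^{k}\|^{2}\leq \left(1+\tfrac{\gamma}{\beta}\right)^{2}\|x_g^{k+1}-x_g^{k}\|^{2},
\end{align*}
which is the origin of the $(1+\gamma/\beta)^{2}$ factor in the statement.

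The second step is to extract a quantitative decrement from the monotonic-summable sequence of Equation~\eqref{eq:monoandsum}, call it $(b_k)$. Under $\gamma<\kappa\beta$, Propositions~\ref{prop:lipschitzmono} and~\ref{prop:lipschitzsum} show that $(b_j)$ is nonincreasing with $\sum_{k}b_{k}\leq \|x_g^{0}-x^{\ast}\|^{2}$, and reading off the slack in the monotonicity proof using the cubic $\kappa^{3}+\kappa^{2}-2\kappa-1=0$ should produce a bound of the form
\begin{align*}
C\,\|x_g^{k+1}-x_g^{k}\|^{2}\leq b_{k}-b_{k+1},
\end{align*}
with a positive constant $C$ of order $(\beta^{2}-\gamma^{2}/\kappa^{2})/\beta^{2}$; the positivity is exactly the condition $\gamma<\kappa\beta$. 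Composing this with Step~1 and applying Fact~\ref{lem:sumsequence} part~\ref{lem:sumsequence:part:b} with $\lambda_k\equiv 1/2$ then yields a finite bound on $\sum_{i}(i+1)\|z^{i+1}-z^{i}\|^{2}$, which via monotonicity of the FPR delivers the claimed pointwise inequality~\eqref{eq:differentiableFPR} and its little-$o$ refinement.

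The main obstacle is the decrement estimate in Step~2. Because $b_k$ involves the objective error at $x_{f}^{k}$ together with the two-step differences $\|\nabla g(x_g^{k+1})-\nabla g(x_g^{k})\|^{2}$ and $\|x_g^{k+1}-x_g^{k}\|^{2}$, the difference $b_k-b_{k+1}$ is a nontrivial algebraic combination and must be organized so that the coefficient of $\|x_g^{k+1}-x_g^{k}\|^{2}$ is nonnegative; the cubic defining $\kappa$, together with the balancing parameter $\theta^{\ast}$ from Lemma~\ref{lem:howtochoosea}, is precisely what makes this possible under $\gamma<\kappa\beta$. Everything else — the displacement-to-FPR identity, FPR monotonicity, and the weighted summability lemma — is routine.
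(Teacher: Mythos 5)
Your proposal follows essentially the same route as the paper's proof: the reduction $\|z^{k+1}-z^k\|^2\leq(1+\gamma/\beta)^2\|x_g^{k+1}-x_g^k\|^2$ via $z^k=x_g^k+\gamma\nabla g(x_g^k)$, the decrement $\eta\|x_g^{k+1}-x_g^k\|^2\leq b_k-b_{k+1}$ with $\eta=(\beta^2-\gamma^2/\kappa^2)/\beta^2$ extracted from Proposition~\ref{prop:lipschitzmono} and summed via Proposition~\ref{prop:lipschitzsum}, then Part~\ref{lem:sumsequence:part:b} of Fact~\ref{lem:sumsequence} plus FPR monotonicity. The one correction: the weighting in Fact~\ref{lem:sumsequence} must be taken as $\lambda_k\equiv 1$ (it is not the DRS relaxation parameter $1/2$), since the decrement inequality holds with unit coefficient; using $1/2$ there would cost a factor of $2$ in the stated constant.
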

\begin{proof}
For all $k \geq 1$, let $$\eta = 1- \frac{(1-\theta^\ast)\gamma^2}{\beta^2} = \frac{\beta^2 +(\theta^\ast-1) \gamma^2}{\beta^2}= \frac{\beta^2 - \gamma^2/\kappa^2}{\beta^2}, $$ let $a_{k-1} = (\eta/(1+\gamma/\beta)^2)\|z^{k+1} - z^k\|^2$, and let
\begin{align*}
b_{k-1} =  2\gamma(f(x_f^k) + g(x_f^k) - f(x^\ast) - g(x^\ast)) + \theta^\ast\gamma^2 \| \nabla g(x_g^{k+1}) - \nabla g(x_g^k)\|^2 +\frac{(1-\theta^\ast)\gamma^2}{\beta^2}\|x_g^{k+1} - x_g^{k} \|^2.
\end{align*}
 Because $z^{k} = x_g^k + \gamma \nabla g(x_g^k)$ and and $\nabla g$ is $(1/\beta)$-Lipschitz, we get
\begin{align*}
\eta\|z^k - z^{k+1}\|^2 &\leq \eta\left(1+\frac{\gamma}{\beta}\right)^2\|x_g^k - x_g^{k+1}\|^2.
\end{align*}
Therefore, Equation~\eqref{prop:lipschitzmono:eq:main} shows that for all $k \geq 1$,
\begin{align*}
a_{k-1} \leq \eta \|x_g^k - x_g^{k+1}\|^2 \leq b_{k-1} - b_{k}.
\end{align*}
Fact~\ref{lem:sumsequence} applied to the sequences $(a_j)_{j \geq 0}$ and $(b_j)_{j \geq 0}$ with weighting parameters $\lambda_k \equiv 1$, (not to be confused with the constant relaxation parameter of the relaxed PRS algorithm), yields
\begin{align*}
\sum_{i=0}^\infty (i+1)a_i \leq \sum_{i=0}^\infty b_{i}  \stackrel{\eqref{prop:lipschitzsum:eq:main}}{\leq} \|x_g^0 - x^\ast\|^2.
\end{align*}
\cite[Part 2 of Theorem 1]{davis2014convergence} shows that $(a_j)_{j \geq 0}$ is monotonic. Therefore, the result follows from Fact~\ref{lem:sumsequence}.
\qed\end{proof}

\begin{remark}
Note that the FBS algorithm achieves $o(1/(k+1))$ objective error rate and $o(1/(k+1)^2)$ FPR rate as long as $\gamma < 2\beta$ \cite[Theorem 3]{davis2014convergence}. For the DRS algorithm, our analysis only covers the smaller range $\gamma \leq \kappa \beta$. It is an open question whether $\kappa$ can be improved for the DRS algorithm.
\end{remark}

\section{Linear convergence}\label{sec:linearconvergence}

In this section, we study the convergence rate of relaxed PRS under the assumption
\begin{assump}\label{assump:mixed}
The gradient of at least one of the functions $f$ and $g$ is Lipschitz, and at least one of the functions $f$ and $g$ is strongly convex.  In symbols: $(\mu_f + \mu_g)(\beta_f + \beta_g) > 0$.
\end{assump}
Linear convergence of relaxed PRS is expected whenever Assumption~\ref{assump:mixed} is true. In addition, by the strong convexity of $f + g$, the minimizer of Problem~\eqref{eq:simplesplit} is unique.

The following proposition lists some consequences of linear convergence of the relaxed PRS sequence $(z^j)_{j \geq 0}$.
\begin{proposition}[Consequences of linear convergence]\label{prop:linearconvergenceimplies}
Let $(C_j)_{j \geq0} \subseteq [0, 1]$ be a positive scalar sequence, and suppose that for all $k \geq0$,
\begin{align}\label{prop:linearconvergenceimplies:eq:main}
\|z^{k+1} - z^\ast\| \leq C_k\|z^k - z^\ast\|.
\end{align}
Fix $k \geq 1$. Then
\begin{align*}
 \|x_g^k - x^\ast\|^2 + \gamma^2\|\tnabla g(x_g^k) - \tnabla g(x^\ast)\|^2 \leq \|z^0 - z^\ast\|^2\prod_{i = 0}^{k-1} C_i^2; \\
 \|x_f^k - x^\ast\|^2 + \gamma^2\|\tnabla f(x_f^k) - \tnabla f(x^\ast)\|^2 \leq \|z^0 - z^\ast\|^2\prod_{i = 0}^{k-1} C_i^2.
 \end{align*}
If $\lambda <1$, then the FPR rate holds: $\|(\TPRS)_{\lambda}z^k- z^k\| \leq \sqrt{\lambda/(1-\lambda)}\|z^0 - z^\ast\|\prod_{i = 0}^{k-1} C_i.$
Consequently, if the gradient $\nabla f$ (respectively $\nabla g$), is $({1}/{\beta})$-Lipschitz and $x^k = x_g^k$ (respectively $x^k = x_f^k$), then
\begin{align*}
&f(x^k) + g(x^k) - f(x^\ast) - g(x^\ast) \leq \frac{\|z^0 - z^\ast\|^2}{\gamma} \prod_{i=0}^{k-1}C_i^2 \times\begin{cases}
1  , & \mbox{if } \gamma \leq \beta; \\
1 + \frac{(\gamma - \beta)}{2\beta}, & \mbox{otherwise.}\\
\end{cases}
\end{align*}
\end{proposition}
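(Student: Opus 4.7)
The driver of every conclusion is the iterated contraction $\|z^k-z^\ast\|\le\|z^0-z^\ast\|\prod_{i=0}^{k-1}C_i$, obtained by composing the hypothesis \eqref{prop:linearconvergenceimplies:eq:main}. I would push this single estimate through each quantity of interest using tools already assembled in the paper.

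For the bounds on $x_g^k$ and $x_f^k$: apply the firm-nonexpansiveness inequality \eqref{cor:proxcontraction:eq:main} to $\prox_{\gamma g}$ at the pair $(z^k,z^\ast)$ and rewrite the two ``residual'' vectors via $z-x_g=\gamma\tnabla g(x_g)$ from Lemma~\ref{prop:DRSmainidentity}; this gives $\|x_g^k-x^\ast\|^2+\gamma^2\|\tnabla g(x_g^k)-\tnabla g(x^\ast)\|^2\le\|z^k-z^\ast\|^2$, which combines with the iterated contraction to yield the claim. The $x_f^k$ estimate is the same argument applied to $\prox_{\gamma f}$ at $(\refl_{\gamma g}(z^k),\refl_{\gamma g}(z^\ast))$: nonexpansiveness of $\refl_{\gamma g}$ (Part~\ref{prop:basicprox:part:nonexpansive} of Proposition~\ref{prop:basicprox}) absorbs the reflection back into the $z$-norm, and the identity $\refl_{\gamma g}(z)-x_f=\gamma\tnabla f(x_f)$ (implicit in Lemma~\ref{prop:DRSmainidentity}) turns the residual into $\gamma\tnabla f$. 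The FPR rate for $\lambda<1$ follows by plugging the iterated contraction into the Fej\'er inequality \eqref{eq:fejer}, which gives $\tfrac{1-\lambda}{\lambda}\|(\TPRS)_\lambda z^k-z^k\|^2\le\|z^k-z^\ast\|^2$.

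For the objective-error bound, take the case $\nabla g$ is $(1/\beta)$-Lipschitz with $x^k=x_f^k$ (the other case is symmetric in $f,g$). Combine convexity of $f$ at $x_f^k$ with the descent lemma for $g$ centered at $x^\ast$ to obtain
\begin{equation*}
f(x_f^k)+g(x_f^k)-f(x^\ast)-g(x^\ast)\le\langle\tnabla f(x_f^k)+\nabla g(x^\ast),\,x_f^k-x^\ast\rangle+\tfrac{1}{2\beta}\|x_f^k-x^\ast\|^2.
\end{equation*}
Specializing Lemma~\ref{prop:DRSmainidentity} to $z=z^\ast$ (so $x_g=x_f=x^\ast$) forces $\nabla g(x^\ast)+\tnabla f(x^\ast)=0$, and Young's inequality with parameter $\gamma$ then bounds the inner product by $\tfrac{1}{2\gamma}\bigl(\|x_f^k-x^\ast\|^2+\gamma^2\|\tnabla f(x_f^k)-\tnabla f(x^\ast)\|^2\bigr)$. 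Plugging in the $x_f^k$ bound established above gives a total at most $\bigl(\tfrac{1}{2\gamma}+\tfrac{1}{2\beta}\bigr)\|z^0-z^\ast\|^2\prod_{i=0}^{k-1}C_i^2$. This collapses to $\tfrac{1}{\gamma}\|z^0-z^\ast\|^2\prod_{i=0}^{k-1}C_i^2$ when $\gamma\le\beta$ (since then $\tfrac{1}{2\beta}\le\tfrac{1}{2\gamma}$) and rearranges to $\tfrac{1}{\gamma}\bigl(1+\tfrac{\gamma-\beta}{2\beta}\bigr)\|z^0-z^\ast\|^2\prod_{i=0}^{k-1}C_i^2$ when $\gamma>\beta$, matching both announced cases. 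I do not foresee a serious obstacle; the only care required is verifying that the particular subgradient of $f$ at the fixed point selected by the algorithm is the one satisfying the zero-sum optimality identity, which is immediate from Lemma~\ref{prop:DRSmainidentity}.
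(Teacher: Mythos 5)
Your proof is correct, and the first two thirds of it (the coordinate bounds via firm nonexpansiveness of the proximal operators together with the identities of Lemma~\ref{prop:DRSmainidentity}, and the FPR bound via the Fej\'er inequality~\eqref{eq:fejer}) coincide with the paper's argument. Where you genuinely diverge is the objective-error bound. The paper reuses its fundamental inequality for Lipschitz gradients (Proposition~\ref{prop:DRSupperfglipschitz}): it takes the infimum of that inequality over $\lambda\in[0,1]$, evaluates at $\lambda=1/2$, drops the $-\|z_\lambda-z^\ast\|^2$ term, and then controls $\|z^k-z_{1/2}\|^2$ by $\|z^k-z^\ast\|^2$ through the Fej\'er inequality. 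You instead give a direct, self-contained estimate: convexity of the nonsmooth function at its iterate, the descent lemma for the smooth function centered at $x^\ast$, the fixed-point optimality identity $\tnabla f(x^\ast)+\nabla g(x^\ast)=0$ (Equation~\eqref{eq:gradoptimality}), and Young's inequality with parameter $\gamma$, which reduces everything to the coordinate bound you already proved. The arithmetic $\tfrac{1}{2\gamma}+\tfrac{1}{2\beta}=\tfrac{1}{\gamma}\bigl(1+\tfrac{\gamma-\beta}{2\beta}\bigr)$ reproduces the paper's constants exactly in both cases. Your route is more elementary and never touches the relaxed iteration itself, only the single-step coordinate estimate; the paper's route is shorter in context because Proposition~\ref{prop:DRSupperfglipschitz} is already available and is reused elsewhere. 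Your closing caveat about which subgradient of $f$ is selected at the fixed point is the right thing to check, and it is indeed settled by Lemma~\ref{prop:DRSmainidentity} applied at $z=z^\ast$.
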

\begin{proof}
The bounds for $x_g^k$ and $x_f^k$ follow because
$\|x_g^k - x^\ast\|^2 + \gamma^2\|\nabla g(x_g^k)  -\nabla g(x^\ast)\|^2 \leq \|z^k - z^\ast\|^2,$
 and  $\|x_f^k - x^\ast\|^2 + \gamma^2\|\tnabla f(x_f^k) - \tnabla f(x^\ast)\|^2 \leq \|\refl_{\gamma g}(z^k) - \refl_{\gamma g}(z^\ast)\|^2 \leq \|z^k - z^\ast\|^2$
by Part~\ref{cor:proxcontraction} of Proposition~\ref{prop:basicprox}, the nonexpansiveness of $\refl_{\gamma f}$, and Equation~\eqref{prop:linearconvergenceimplies:eq:main}.

The FPR convergence rate follows from the Fej\'er-type inequality in Equation~\eqref{eq:fejer}.

Now fix $k \geq 1$, and let $z_\lambda = (\TPRS)_\lambda z^k$ for all $\lambda \in [0, 1]$. Then Proposition~\ref{prop:DRSupperfglipschitz} shows that:
\begin{align*}
&f(x^k) + g(x^k) - f(x^\ast) - g(x^\ast) \\
&\leq \inf_{\lambda \in [0, 1]}\frac{1}{4\gamma \lambda}\begin{cases}
\|z^k - z^\ast\|^2 - \|z_\lambda - z^\ast\|^2 +\left(1 + \frac{1}{2\lambda}\left( \frac{\gamma}{\beta} - 1\right)\right)\|z^k - z_\lambda\|^2, & \mbox{if } \gamma \leq \beta; \\
\left(1 + \frac{(\gamma - \beta)}{2\beta}\right)\left(\|z^k - z^\ast\|^2 - \|z_\lambda - z^\ast\|^2 + \|z^k - z_\lambda\|^2\right), & \mbox{otherwise.}
\end{cases} \\
&\leq \frac{1}{2\gamma}\begin{cases}
\|z^k - z^\ast\|^2+ \|z^k - z_{1/2}\|^2  , & \mbox{if } \gamma \leq \beta; \\
\left(1 + \frac{(\gamma - \beta)}{2\beta}\right)\left(\|z^k - z^\ast\|^2  + \|z^k - z_{1/2}\|^2\right), & \mbox{otherwise.}\\
\end{cases} \numberthis\label{eq:preboundlinear}
\end{align*}
The objective error rate now follows from Equation~\eqref{eq:preboundlinear} and the FPR convergence rate.
\qed\end{proof}

Whenever $\sup_{j \geq 0} C_j < 1$, Proposition~\ref{prop:linearconvergenceimplies} gives the linear convergence rates of the sequences $(z^j)_{j \geq 0}$, $(x_g^j)_{j \geq 0}$ and $(x_f^j)_{j \geq 0}$, the subgradient error, the FPR, and the objective error. In the following sections, we will prove Inequality~\eqref{prop:linearconvergenceimplies:eq:main} holds under several different regularity assumptions on $f$ and $g$. In each case we leave it to the reader to apply Proposition~\ref{prop:linearconvergenceimplies}.

\subsection{Solely regular $f$ or $g$}\label{sec:strongconvexity}

Throughout this subsection, at least one of the functions $f$ and $g$ will carry both regularity properties. In symbols: $\mu_f\beta_f + \mu_g\beta_g > 0$.

The following theorem recovers \cite[Proposition 4]{lions1979splitting} as a special case ($\lambda_k \equiv 1/2$).

\begin{theorem}[Linear convergence with regularity of $g$]\label{thm:nonergodicstronglipschitzderivative}
Let $z^\ast$ be a fixed point of $\TPRS$, let $x^\ast = \prox_{\gamma g}(z^\ast)$, and suppose that $\mu_g\beta_g > 0$. For all $\lambda \in [0, 1]$, let $C(\lambda) := \left(1 - {4\gamma\lambda\mu_g}/{(1 + {\gamma}/{\beta_g})^2}\right)^{{1}/{2}}$.
Then for all $k \geq 0$,
$\|z^{k+1} - z^\ast\| \leq  C(\lambda_k)   \|z^k - z^\ast\|.$
\end{theorem}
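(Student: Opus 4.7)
The plan is to leverage Theorem~\ref{prop:sumauxilliaryterms} directly and then lower-bound the auxiliary quantity $S_g(x_g^k, x^\ast)$ by a constant multiple of $\|z^k - z^\ast\|^2$. First, since $\lambda_k \in (0,1]$ the coefficient $(1 - 1/\lambda_k)$ is nonpositive, and $S_f(x_f^k, x^\ast) \geq 0$, so dropping both quantities in \eqref{prop:sumauxilliaryterms:eq:main} yields
\begin{align*}
8\gamma\lambda_k S_g(x_g^k, x^\ast) \leq \|z^k - z^\ast\|^2 - \|z^{k+1} - z^\ast\|^2.
\end{align*}
Thus it suffices to show $S_g(x_g^k, x^\ast) \geq \frac{\mu_g}{2(1 + \gamma/\beta_g)^2}\|z^k - z^\ast\|^2$, because then the coefficient on the right will match $4\gamma\lambda_k\mu_g/(1+\gamma/\beta_g)^2$ as required.

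To establish this lower bound, I would use Lemma~\ref{prop:DRSmainidentity}, which gives the representation $z^k = x_g^k + \gamma \tnabla g(x_g^k)$. Applying the same identity at the fixed point (using $x^\ast = \prox_{\gamma g}(z^\ast)$) and subtracting, I get
\begin{align*}
z^k - z^\ast = (x_g^k - x^\ast) + \gamma(\tnabla g(x_g^k) - \tnabla g(x^\ast)).
\end{align*}
The triangle inequality followed by the $(1/\beta_g)$-Lipschitz property of $\nabla g$ then gives
\begin{align*}
\|z^k - z^\ast\| \leq \|x_g^k - x^\ast\| + \gamma \|\nabla g(x_g^k) - \nabla g(x^\ast)\| \leq \left(1 + \frac{\gamma}{\beta_g}\right)\|x_g^k - x^\ast\|.
\end{align*}
Squaring and combining with the strong-convexity branch $S_g(x_g^k, x^\ast) \geq (\mu_g/2)\|x_g^k - x^\ast\|^2$ produces exactly the desired bound, and substituting back finishes the proof.

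There is no real obstacle here: the only choice to make is whether to convert the gradient difference into a displacement using Lipschitzness (what I did) or to convert the displacement into a gradient difference using strong monotonicity; only the former yields the stated constant $(1+\gamma/\beta_g)^{-2}$, so one must notice that one should exploit the Lipschitz side of $S_g$ to eliminate the gradient term, and then invoke the strong-convexity side of $S_g$ to dominate $\|x_g^k - x^\ast\|^2$. No ergodic/FPR machinery is needed beyond Theorem~\ref{prop:sumauxilliaryterms}, and the bound is uniform in $k$, so the recursion immediately iterates into the geometric decay $\|z^{k+1} - z^\ast\| \leq C(\lambda_k)\|z^k - z^\ast\|$.
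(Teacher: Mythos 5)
Your proposal is correct and follows essentially the same route as the paper's proof: both invoke the key inequality \eqref{prop:sumauxilliaryterms:eq:main} from Theorem~\ref{prop:sumauxilliaryterms}, keep only the strong-convexity branch $S_g(x_g^k,x^\ast)\ge(\mu_g/2)\|x_g^k-x^\ast\|^2$, and then use the identity $z^k = x_g^k+\gamma\nabla g(x_g^k)$ together with the $(1/\beta_g)$-Lipschitz continuity of $\nabla g$ to obtain $\|z^k-z^\ast\|\le(1+\gamma/\beta_g)\|x_g^k-x^\ast\|$ and conclude. The constants match exactly, so there is nothing to add.
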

\begin{proof}
Theorem~\ref{prop:sumauxilliaryterms} bounds the distance of $x_g^k$ to  the minimizer
\begin{align*}
\frac{8\gamma\lambda_k\mu_g}{2}\|x_g^k - x^\ast\|^2 &\stackrel{\eqref{prop:sumauxilliaryterms:eq:main}}{\leq}  \|z^{k} - z^\ast\|^2 - \|z^{k+1} - z^\ast\|^2. \numberthis \label{thm:nonergodicstronglipschitzderivative:eq:2}
\end{align*}
Now we use the identity $z^{k} = x_g^k + \gamma \nabla g(x_g^k)$ and the Lipschitz continuity of $\nabla g$ to upper bound $\|z^k - z^\ast\|^2$ by a multiple of $\|x_g^k - x^\ast\|^2$:
$\|z^k - z^\ast\|^2 \leq \left(1+\gamma/\beta_g\right)^2\|x_g^k - x^\ast\|^2.$
Rearrange Equation~\eqref{thm:nonergodicstronglipschitzderivative:eq:2} with this bound to complete the proof.
\qed\end{proof}

\begin{remark}
For all $\lambda \in [0, 1]$, the constant $C(\lambda)$ is minimal when $\gamma = \beta_g$, i.e. $C(\lambda) = \left( 1- \lambda_k\mu_g\beta_g\right)^{{1}/{2}}$. Furthermore, for any choice of $\gamma$, we have the bound $C(1) \leq C(\lambda)$. In particular, for $g = ({1}/{2})\|\cdot \|^2$, the PRS algorithm converges in one step ($C(1) = 0$). Thus, this rate is tight.
\end{remark}

The following theorem deduces linear convergence of relaxed PRS whenever $f$ carries both regularity properties. Note that linear convergence of the PRS algorithm ($\lambda_k \equiv 1$) does not follow.

\begin{theorem}[Linear convergence with regularity of $f$]\label{prop:regf}
Let $z^\ast$ be a fixed point of $\TPRS$, let $x^\ast = \prox_{\gamma g}(z^\ast)$, and suppose that $\mu_f\beta_f > 0$. For all $\lambda \in [0, 1]$, let $$C(\lambda) := \left(1 -   (\lambda/2)\min\left\{ {4\gamma  \mu_f}/{\left(1+{\gamma}/{\beta_f}\right)^2}, (1 - \lambda)\right\}\right)^{{1}/{2}}.$$ Then for all $k \geq 0$,
$\|z^{k+1} - z^\ast\| \leq  C(\lambda_k)   \|z^k - z^\ast\|.$
\end{theorem}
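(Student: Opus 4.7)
The plan is to imitate the proof of Theorem~\ref{thm:nonergodicstronglipschitzderivative}, starting from the auxiliary term bound of Theorem~\ref{prop:sumauxilliaryterms}. Since $\mu_f>0$, I would use $S_f(x_f^k,x^\ast)\geq (\mu_f/2)\|x_f^k-x^\ast\|^2$ and discard the nonnegative $S_g$ term, then rearrange Equation~\eqref{prop:sumauxilliaryterms:eq:main} to the form
\begin{equation*}
\|z^{k+1}-z^\ast\|^2 \;\leq\; \|z^k-z^\ast\|^2 - 4\gamma\lambda_k\mu_f\|x_f^k-x^\ast\|^2 - \tfrac{1-\lambda_k}{\lambda_k}\|z^{k+1}-z^k\|^2.
\end{equation*}
The task is then to lower-bound the two negative ``dissipation'' terms on the right by $c\|z^k-z^\ast\|^2$, where $c=1-C(\lambda_k)^2$.

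The delicate step is upper bounding $\|z^k-z^\ast\|$ by something involving $x_f^k-x^\ast$. Unlike Theorem~\ref{thm:nonergodicstronglipschitzderivative}, where the clean identity $z^k=x_g^k+\gamma\nabla g(x_g^k)$ converts Lipschitz-ness of $\nabla g$ directly into $\|z^k-z^\ast\|\leq(1+\gamma/\beta_g)\|x_g^k-x^\ast\|$, here $x_f^k$ sits behind the extra reflection $\refl_{\gamma g}$. To bridge this, I would apply the prox optimality of $x_f^k$ and $x^\ast$ through $\refl_{\gamma g}$, giving $\refl_{\gamma g}(z^k)=x_f^k+\gamma\tnabla f(x_f^k)$ and $\refl_{\gamma g}(z^\ast)=x^\ast+\gamma\tnabla f(x^\ast)$. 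Using $\refl_{\gamma g}(z)=2x_g-z$, subtracting, and invoking the PRS step identity $z^{k+1}-z^k=2\lambda_k(x_f^k-x_g^k)$ from Lemma~\ref{prop:DRSmainidentity}, I can solve for
\begin{equation*}
z^k-z^\ast \;=\; (x_f^k-x^\ast) \;-\; \tfrac{1}{\lambda_k}(z^{k+1}-z^k) \;-\; \gamma\bigl(\tnabla f(x_f^k)-\tnabla f(x^\ast)\bigr).
\end{equation*}
The triangle inequality together with the $(1/\beta_f)$-Lipschitz property of $\tnabla f$ then yields $\|z^k-z^\ast\|\leq (1+\gamma/\beta_f)\|x_f^k-x^\ast\|+(1/\lambda_k)\|z^{k+1}-z^k\|$, and squaring via $(a+b)^2\leq 2a^2+2b^2$ gives $\|z^k-z^\ast\|^2\leq 2(1+\gamma/\beta_f)^2\|x_f^k-x^\ast\|^2+(2/\lambda_k^2)\|z^{k+1}-z^k\|^2$.

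To close the argument, I would set $c:=(\lambda_k/2)\min\{4\gamma\mu_f/(1+\gamma/\beta_f)^2,\,1-\lambda_k\}$ and check directly that this is the largest constant simultaneously satisfying $2c(1+\gamma/\beta_f)^2\leq 4\gamma\lambda_k\mu_f$ and $2c/\lambda_k^2\leq (1-\lambda_k)/\lambda_k$; multiplying the squared bound by $c$ then gives $c\|z^k-z^\ast\|^2\leq 4\gamma\lambda_k\mu_f\|x_f^k-x^\ast\|^2+\tfrac{1-\lambda_k}{\lambda_k}\|z^{k+1}-z^k\|^2$. Substituting into the first display and taking square roots yields $\|z^{k+1}-z^\ast\|\leq\sqrt{1-c}\,\|z^k-z^\ast\|=C(\lambda_k)\|z^k-z^\ast\|$. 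The main obstacle, and the source of the minimum in $C(\lambda_k)$, is precisely that strong convexity of $f$ controls only $\|x_f^k-x^\ast\|$ rather than $\|z^k-z^\ast\|$, forcing an FPR correction $(1/\lambda_k)\|z^{k+1}-z^k\|$ in the upper bound; absorbing this term requires ``borrowing'' from the $-(1-\lambda_k)/\lambda_k$ piece of the auxiliary inequality, which explains both the $(1-\lambda)$ factor inside the minimum and why pure PRS ($\lambda_k\equiv 1$) is excluded from the linear rate in this case.
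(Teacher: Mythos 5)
Your proposal is correct and follows essentially the same route as the paper: both start from the auxiliary term bound \eqref{prop:sumauxilliaryterms:eq:main} with $S_f\geq(\mu_f/2)\|x_f^k-x^\ast\|^2$, derive the identical decomposition $z^k-z^\ast=(x_f^k-x^\ast)-\gamma(\tnabla f(x_f^k)-\tnabla f(x^\ast))+2(x_g^k-x_f^k)$, bound its square by $2(1+\gamma/\beta_f)^2\|x_f^k-x^\ast\|^2+(2/\lambda_k^2)\|z^{k+1}-z^k\|^2$, and absorb this into the two dissipation terms with the same constant $C'=(\lambda_k/2)\min\{4\gamma\mu_f/(1+\gamma/\beta_f)^2,\,1-\lambda_k\}$. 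The only cosmetic differences are that you phrase the key identity via prox optimality of $\refl_{\gamma g}$ and keep the FPR as $\|z^{k+1}-z^k\|$ rather than $2\lambda_k\|x_f^k-x_g^k\|$.
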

\begin{proof}
Theorem~\ref{prop:sumauxilliaryterms} bounds the distance of $x_f^k$ to  the minimizer (where we substitute $z^{k+1} - z^k = 2\lambda_k(x_f^k - x_g^k)$)
\begin{align*}
4\gamma\lambda_k\mu_f\|x_f^k - x^\ast\|^2 + 4\lambda_k\left(1-\lambda_k \right) \|x_f^k - x_g^k\|^2&\stackrel{\eqref{prop:sumauxilliaryterms:eq:main}}{\leq} \|z^k - z^\ast\|^2 - \|z^{k+1} - z^\ast\|^2. \numberthis \label{prop:regf:eq:first}
 \end{align*}
Recall the identities:
\begin{align*}
z^k = x_g^k + \gamma \nabla g(x_g^k) = x_f^k - \gamma \nabla f(x_f^k) + 2(x_g^k - x_f^k) && \mathrm {and} && z^\ast = x^\ast - \gamma \nabla f(x^\ast).
\end{align*}
Therefore, by the convexity of $\|\cdot \|^2$, we can bound the distance of $z^k$ to the fixed point $z^\ast$
\begin{align*}
\|z^k - z^\ast\|^2 \leq 2\left(\left(1+\frac{\gamma}{\beta_f}\right)^2\|x_f^k - x^\ast\|^2  + 4\|x_g^k - x_f^k\|^2\right). \numberthis\label{prop:regf:eq:second}
\end{align*}
Equations~\eqref{prop:regf:eq:first} and~\eqref{prop:regf:eq:second} produce the contraction:
\begin{align*}
C'\|z^k - z^\ast \|^2 + \|z^{k+1} - z^\ast\|^2 \leq 4\gamma\lambda_k\mu_f\|x_f^k - x^\ast\|^2 + 4\lambda_k(1 - \lambda_k)\|x_f^k - x_g^k\|^2  + \|z^{k+1} - z^\ast\|^2 \leq \|z^k - z^\ast\|^2
\end{align*}
where $C' = {(\lambda_k/2)\min\left\{ {4\gamma  \mu_f}/{\left(1+{\gamma}/{\beta_f}\right)^2}, (1 - \lambda_k)\right\}}.$ 
\qed\end{proof}

\subsection{Complementary regularity of $f$ and $g$}

In this subsection, we assume that $f$ and $g$ share the regularity. In symbols: $\mu_f\beta_g + \mu_g\beta_f > 0$. In this case, linear convergence is expected. To the best of our knowledge, the next result is new.

\begin{theorem}[Linear convergence: mixed case]\label{thm:sharedreg}
Let $z^\ast$ be a fixed point of $\TPRS$, let $x^\ast = \prox_{\gamma g}(z^\ast)$, and suppose that $\nabla g$, (respectively $\nabla f$), is $({1}/{\beta})$-Lipschitz and $f$, (respectively $g$), is $\mu$-strongly convex.  For all $\lambda \in [0, 1]$, let $C(\lambda) :=  \left(1 -   {(4\lambda/3)\min\{ \gamma  \mu, { \beta}/{\gamma}, (1 - \lambda)\}}\right)^{{1}/{2}}$. Then for all $k \geq 0$,
$\|z^{k+1} - z^\ast\| \leq  C(\lambda_k)\|z^k - z^\ast\|.$
\end{theorem}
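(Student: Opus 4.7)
The plan is to mimic the proofs of Theorems~\ref{thm:nonergodicstronglipschitzderivative} and~\ref{prop:regf}, starting from the auxiliary term inequality~\eqref{prop:sumauxilliaryterms:eq:main} of Theorem~\ref{prop:sumauxilliaryterms}, but this time exploiting that each of $S_f$ and $S_g$ contributes one regularity bound rather than two. Assume, without loss of generality, that $\nabla g$ is $(1/\beta)$-Lipschitz and $f$ is $\mu$-strongly convex; the other case is symmetric (swap $f,g$ and note that the identity $z^{k+1}-z^k=2\lambda_k(x_f^k-x_g^k)$ still delivers the FPR term). First I would rearrange~\eqref{prop:sumauxilliaryterms:eq:main}, using that $\lambda_k\leq 1$, into
\begin{align*}
\|z^{k+1}-z^\ast\|^2 \leq \|z^k-z^\ast\|^2 - 8\gamma\lambda_k\bigl(S_f(x_f^k,x^\ast)+S_g(x_g^k,x^\ast)\bigr) - (1/\lambda_k-1)\|z^{k+1}-z^k\|^2,
\end{align*}
so the task reduces to lower bounding the last two (negative) terms by $C'\|z^k-z^\ast\|^2$ with $C'=(4\lambda_k/3)\min\{\gamma\mu,\beta/\gamma,1-\lambda_k\}$.

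Next, using $\mu$-strong convexity of $f$ and $(1/\beta)$-Lipschitzness (equivalently, $\beta$-cocoercivity) of $\nabla g$, together with the identity $\gamma\nabla g(x_g^k)=z^k-x_g^k$ and $\gamma\nabla g(x^\ast)=z^\ast-x^\ast$, I get
\begin{align*}
S_f(x_f^k,x^\ast)\geq \tfrac{\mu}{2}\|x_f^k-x^\ast\|^2,\qquad S_g(x_g^k,x^\ast)\geq \tfrac{\beta}{2\gamma^2}\|(z^k-x_g^k)-(z^\ast-x^\ast)\|^2.
\end{align*}
Setting $A:=\|x_f^k-x^\ast\|^2$, $B:=\|x_f^k-x_g^k\|^2$, and $D:=\|(z^k-x_g^k)-(z^\ast-x^\ast)\|^2=\gamma^2\|\nabla g(x_g^k)-\nabla g(x^\ast)\|^2$, and using $\|z^{k+1}-z^k\|^2=4\lambda_k^2 B$ from Lemma~\ref{prop:DRSmainidentity}, the three penalty terms combine to at least
\begin{align*}
4\lambda_k\bigl[\gamma\mu\,A + (\beta/\gamma)\,D + (1-\lambda_k)\,B\bigr] \geq 4\lambda_k\min\{\gamma\mu,\beta/\gamma,1-\lambda_k\}\,(A+B+D).
\end{align*}

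The key geometric step is then to convert $A+B+D$ back into $\|z^k-z^\ast\|^2$. Here I would use the telescoping decomposition
\begin{align*}
z^k-z^\ast = (x_f^k-x^\ast) + (x_g^k-x_f^k) + \gamma\bigl(\nabla g(x_g^k)-\nabla g(x^\ast)\bigr),
\end{align*}
valid because $z^k=x_g^k+\gamma\nabla g(x_g^k)$ and $z^\ast=x^\ast+\gamma\nabla g(x^\ast)$, and apply the Jensen-type bound $\|u+v+w\|^2\leq 3(\|u\|^2+\|v\|^2+\|w\|^2)$ to obtain $\|z^k-z^\ast\|^2\leq 3(A+B+D)$. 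This is precisely where the factor $4/3$ in the statement comes from—splitting $z^k-z^\ast$ into three pieces rather than two is the main trick that handles the mixed case without assuming a single function carries both regularities.

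Combining the last two displays yields
\begin{align*}
\|z^{k+1}-z^\ast\|^2 \leq \|z^k-z^\ast\|^2 - \tfrac{4\lambda_k}{3}\min\{\gamma\mu,\beta/\gamma,1-\lambda_k\}\,\|z^k-z^\ast\|^2 = C(\lambda_k)^2\|z^k-z^\ast\|^2,
\end{align*}
which is the claimed contraction. I expect the main (small) obstacle to be choosing the right triangle/Jensen split: a two-term split gives the wrong worst-case constant (and also no control over $B$), so one has to keep $x_g^k-x_f^k$ as a separate summand in order to match it with the FPR penalty $(1-\lambda_k)B$ arising from the $\|z^{k+1}-z^k\|^2$ term. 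Once this split is chosen, the rest is routine algebra.
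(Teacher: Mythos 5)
Your proof is correct and is essentially the paper's own argument: both start from Equation~\eqref{prop:sumauxilliaryterms:eq:main}, convert the three penalty terms to $4\lambda_k[\gamma\mu A+(\beta/\gamma)D+(1-\lambda_k)B]$, and recover $\|z^k-z^\ast\|^2\leq 3(A+B+D)$ via the same three-term decomposition $z^k-z^\ast=(x_f^k-x^\ast)+(x_g^k-x_f^k)+\gamma(\nabla g(x_g^k)-\nabla g(x^\ast))$ and Jensen. The symmetric case is likewise handled in the paper exactly as you indicate, via the analogous identity $z^k=x_g^k-\gamma\nabla f(x_f^k)+(x_g^k-x_f^k)$.
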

\begin{proof}
First assume that $\mu_f\beta_g > 0$. Theorem~\ref{prop:sumauxilliaryterms} bounds the distance of $x_f^k$ to the minimizer and the distance of $\nabla g(x_g^k)$ to the optimal gradient (where we substitute $z^{k+1} - z^k = 2\lambda_k(x_f^k - x_g^k)$):
\begin{align*}
4\gamma\lambda_k\mu\|x_f^k - x^\ast\|^2 &+ 4\gamma\lambda_k \beta\|\nabla g(x_g^k) - \nabla g(x^\ast)\|^2 + 4\lambda_k\left(1- \lambda_k\right) \|x_f^k - x_g^k\|^2\\
 &\stackrel{\eqref{prop:sumauxilliaryterms:eq:main}}{\leq} \|z^k - z^\ast\|^2 - \|z^{k+1} - z^\ast\|^2. \numberthis\label{thm:sharedreg:eq:first}
\end{align*}
Recall the identities:
\begin{align*}
z^k = x_g^k + \gamma \nabla g(x_g^k) = x_f^k + \gamma \nabla g(x_g^k) + (x_g^k - x_f^k) && \mathrm{and} && z^\ast = x^\ast + \gamma \nabla g(x^\ast).
\end{align*}
Thus, from the convexity of $\|\cdot \|^2$,
\begin{align*}
\|z^k - z^\ast\|^2 \leq 3\left(\|x_f^k - x^\ast\|^2 + \|\gamma \nabla g(x_g^k) - \gamma \nabla g(x^\ast)\|^2 + \|x_g^k - x_f^k\|^2\right) \numberthis \label{thm:sharedreg:eq:second} .
\end{align*}
We use Equation~\eqref{thm:sharedreg:eq:second} to bound the distance of $z^k$ to the fixed point $z^\ast$ by the left hand side of Equation~\eqref{thm:sharedreg:eq:first}:
\begin{align*}
C'\|z^k - z^\ast \|^2 &\leq 4\gamma\lambda_k\mu\|x_f^k - x^\ast\|^2 + 4\lambda_k ({ \beta}/{\gamma})\|\gamma\nabla g(x_g^k) - \gamma\nabla g(x^\ast)\|^2 + 4\lambda_k(1 - \lambda_k)\|x_f^k - x_g^k\|^2
\end{align*}
where $C' = {(4\lambda_k/3)\min\{ \gamma  \mu, { \beta}/{\gamma}, (1 - \lambda_k)\}}.$  Therefore, we reach the contraction:
\begin{align*}
\|z^{k+1} - z^\ast\| \leq  \left(1 -   {(4\lambda_k/3)\min\{ \gamma  \mu, { \beta}/{\gamma}, (1 - \lambda_k)\}}\right)^{{1}/{2}}\|z^k - z^\ast\|^2.
\end{align*}

If $\mu_g\beta_f> 0$, then the proof is nearly identical, but relies on the identity:
\begin{align*}
z^k = x_g^k + \gamma \tnabla g(x_g^k) = x_g^k - \gamma \nabla f(x_f^k) + (x_g^k - x_f^k).
\end{align*}
\qed\end{proof}

\section{Feasibility Problems with regularity}\label{sec:feasibilityniceintersection}
In this section we consider the feasibility problem: $$\mbox{Given two closed convex subsets $C_f$ and $C_g$ of $\cH$ such that $C_f \cap C_g \neq \emptyset$, find a point $x \in C_f \cap C_g$.}$$  Throughout this section we assume that $\{C_f, C_g\}$ is \emph{boundedly linearly regular}:
\begin{definition}[Bounded linear regularity]\label{defi:linearregularity}
Suppose that $C_1, \cdots, C_m$ are closed convex subsets of $\cH$ with nonempty intersection.  We say that $\{C_1, \cdots, C_m\}$ is boundedly linearly regular if the following holds: for all $\rho > 0$, there exists $\mu_\rho > 0$ such that for all $x \in B(0, \rho)$, (the open ball centered at the origin with radius $\rho$), we have
\begin{align*}
d_{C_1\cap \cdots \cap C_m}(x) &\leq \mu_\rho\max\{d_{C_1}(x), \cdots, d_{C_m}(x)\}
\end{align*}
where for any subset $C \subseteq \cH$, the distance function $d_C(x):=  \inf_{y \in C} \|x -y\|.$
Evidently, if $B(0, \rho) \backslash (C_1 \cap \cdots \cap C_m) \neq \emptyset$, then $\mu_\rho \geq 1$.

We say that $\{C_1,\cdots, C_m\}$ is linearly regular if it is boundedly linearly regular and $\mu_\rho$ does not depend on $\rho$, i.e. $\mu_\rho = \mu_\infty < \infty$.
\qed\end{definition}

Intuitively, (bounded) linear regularity is the following implication: $$\mbox{(close to all of the sets) $\implies$ (close to the intersection).}$$ This property will be key to deducing linear convergence of an application of the relaxed PRS algorithm.  See \cite{davis2014convergence} for the feasibility problem when no regularity is assumed.

There are several ways to model the feasibility problem, e.g. with $f$ and $g$ given by indicator functions, distance functions, or squared distance functions. In this section, we will model the feasibility problem using squared distance functions:
\begin{align*}
f(x) := d_{C_f}^2(x)  && \mathrm{and} && g(x) := d_{C_g}^2(x).
\end{align*}
We briefly summarize some properties of squared distance functions.

\begin{proposition}[Properties of distance functions]\label{prop:factsaboutdistancesquared}
Let $C$ be a nonempty closed convex subset of $\cH$. Then the following properties hold:
\begin{enumerate}
\item The function $d_{C}$ is $1$-Lipschitz.
\item The function $d_{C}^2$ is differentiable, and $\nabla d_C^2 = 2(I_{\cH} - P_{C})$. In addition, $\nabla d_C^2$ is $2$-Lipschitz.
\item The proximal identity holds: for all $\gamma > 0$,
\begin{align*}
\prox_{ \gamma d^2_C} &= \frac{1}{2\gamma + 1}I_{\cH} + \frac{2\gamma}{2\gamma + 1} P_{C}.
\end{align*}
\end{enumerate}
\end{proposition}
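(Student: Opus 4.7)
All three items are essentially standard Hilbert-space facts, so the plan is to verify them briefly in order. For Part 1, the $1$-Lipschitz property of $d_C$ is an immediate triangle-inequality argument: for $x, y \in \cH$ and any $z \in C$, one has $d_C(x) \leq \|x - z\| \leq \|x - y\| + \|y - z\|$; taking the infimum over $z \in C$ and then symmetrizing in $x, y$ yields $|d_C(x) - d_C(y)| \leq \|x - y\|$.

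For Part 2, I would compute $\nabla d_C^2$ directly from the definition. Writing $d_C^2(x) = \|x - P_C(x)\|^2$ and using that $P_C(x)$ is the unique point of $C$ satisfying the variational inequality $\langle x - P_C(x), z - P_C(x)\rangle \leq 0$ for all $z \in C$, I would upper-bound $d_C^2(x+h) \leq \|(x+h) - P_C(x)\|^2$ by using $P_C(x) \in C$ as a candidate, and lower-bound $d_C^2(x) \leq \|x - P_C(x+h)\|^2$ by using $P_C(x+h) \in C$. Combining these two sandwich estimates shows that the first-order increment $d_C^2(x+h) - d_C^2(x) - 2\langle x - P_C(x), h\rangle$ is $O(\|h\|^2)$, yielding Fr\'echet differentiability and the claimed formula $\nabla d_C^2 = 2(I_{\cH} - P_C)$. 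The $2$-Lipschitz property then reduces to verifying that $I_{\cH} - P_C$ is nonexpansive, which is a standard consequence of the firm nonexpansiveness of $P_C$ (equivalently, of $\|P_C(x) - P_C(y)\|^2 \leq \langle P_C(x) - P_C(y), x - y\rangle$, from which one expands $\|(I-P_C)(x) - (I-P_C)(y)\|^2 \leq \|x-y\|^2$).

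For Part 3, I would apply the optimality characterization of the proximal operator from Part~\ref{prop:basicprox:part:optprox} of Proposition~\ref{prop:basicprox} together with the gradient formula from Part 2: $y = \prox_{\gamma d_C^2}(x)$ if and only if $(1/\gamma)(x - y) = \nabla d_C^2(y) = 2(y - P_C(y))$, i.e., $(2\gamma + 1)y = x + 2\gamma P_C(y)$. The natural candidate is $y^\ast := \frac{1}{2\gamma+1}x + \frac{2\gamma}{2\gamma+1}P_C(x)$, which lies on the segment from $x$ to $P_C(x)$. The main (minor) obstacle is verifying that $P_C(y^\ast) = P_C(x)$; once this is established, substituting back into the displayed identity is immediate. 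I expect to handle this by exploiting $y^\ast - P_C(x) = \frac{1}{2\gamma+1}(x - P_C(x))$ so that for every $z \in C$, $\langle y^\ast - P_C(x), z - P_C(x)\rangle = \frac{1}{2\gamma+1}\langle x - P_C(x), z - P_C(x)\rangle \leq 0$ by the variational characterization of $P_C(x)$; since $P_C(x) \in C$, uniqueness in the variational inequality forces $P_C(y^\ast) = P_C(x)$, which closes the argument.
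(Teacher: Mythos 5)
Your argument is correct, but it is worth noting that the paper does not actually prove this proposition: it simply defers to \cite[Corollary~12.30]{bauschke2011convex}, so your self-contained derivation is doing genuinely more work than the text. Each of your three steps matches the standard textbook route and goes through: the triangle-inequality argument for Part~1 is fine; the two-sided sandwich in Part~2 is the right device, with the only point requiring a word of care being the lower bound, where you need the nonexpansiveness of $P_C$ to replace $\langle (x+h)-P_C(x+h),\,h\rangle$ by $\langle x-P_C(x),\,h\rangle + O(\|h\|^2)$ --- your outline implicitly uses this, and the $2$-Lipschitz claim then follows exactly as you say from firm nonexpansiveness of $P_C$ (which gives $\|(I_{\cH}-P_C)x-(I_{\cH}-P_C)y\|^2\le\|x-y\|^2-\|P_Cx-P_Cy\|^2$). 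For Part~3, invoking the optimality condition of Part~\ref{prop:basicprox:part:optprox} of Proposition~\ref{prop:basicprox} is legitimate because $d_C^2$ is convex (so the subdifferential at $y$ is the singleton $\{\nabla d_C^2(y)\}$) and the prox is unique; your verification that $P_C(y^\ast)=P_C(x)$ via the variational inequality is the key step and is carried out correctly, since $y^\ast-P_C(x)$ is a positive multiple of $x-P_C(x)$. In short: the citation buys brevity, your version buys transparency, and nothing in your proposal is missing or wrong.
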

\begin{proof}
For a proof see \cite[Corollary 12.30]{bauschke2011convex}.
\qed\end{proof}

Given $z^0 \in \cH$, sequences of implicit stepsize parameters, $(\gamma_{f, j})_{j \geq 0}$, $(\gamma_{g, j})_{j \geq 0}$, and relaxation parameters, $(\lambda_j)_{j \geq 0}$, we consider the iteration: for all $k \geq 0$, let
\begin{align}\label{eq:DRSfeasibilitygamma}
\begin{cases}
x_g^k &= \prox_{\gamma_{g, k} d_{C_g}^2} (z^k); \\ 
x_f^k &= \prox_{\gamma_{f,k} d_{C_f}^2}(2x_g^k - z^k); \\ 
z^{k+1} &= z^k + 2\lambda_k(x_f^k - x_g^k).
\end{cases}
\end{align}
If $(\gamma_{f,j})_{j \geq 0}, (\gamma_{g, j})_{j \geq 0} \subseteq (0, {1}/{2}]$ and $\lambda_k \equiv 1$, then the iteration in Equation~\eqref{eq:DRSfeasibilitygamma} is the \emph{underrelaxed MAP} (see \cite{bauschke1996projection} for the parallel product space version and see \cite{bauschke2013method} for the nonconvex case).  In particular, Corollary~\ref{cor:AP} (below) shows that when all implicit stepsize parameters are equal to ${1}/{2}$ and all relaxation parameters are $1$, Equation~\eqref{eq:DRSfeasibilitygamma} reduces to the MAP algorithm, where $ P_{C_g}z^k = 2x_g^k - z^k$, and $z^{k+1} = P_{C_f}P_{C_g} z^k$.  This was already noticed in \cite[Proposition 2.5]{luke2008finding} for the fixed $\gamma$ case.

We now specialize the fundamental inequality in Proposition~\ref{prop:DRSupper} to the feasibility problem. See Appendix~\ref{app:feasibilityniceintersection} for a proof.
\begin{proposition}[Upper fundamental inequality for feasibility problem]\label{eq:DRSupperfeas}
Suppose that $z\in \cH$ and $z^+ = (\TPRS^{\gamma_f, \gamma_g})_{\lambda}(z)$. Then for all $x^\ast\in C_f\cap C_g$,
\begin{align}\label{eq:DRSupperfeas:eq:main}
8\lambda(\gamma_{f} d^2_{C_f}(x_f) + \gamma_{g}d^2_{C_g}(x_g))  &\leq \|z -  x^\ast\|^2 -  \|z^{+} - x^\ast\|^2 + \left(1 - \frac{1}{\lambda} \right)\|z^{+} - z\|^2.
\end{align}
\end{proposition}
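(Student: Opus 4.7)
The plan is to reduce~\eqref{eq:DRSupperfeas:eq:main} to a clean identity by exploiting two features of squared distance functions recorded in Proposition~\ref{prop:factsaboutdistancesquared}: the gradient $\nabla d_C^2 = 2(I_\cH - P_C)$ is $2$-Lipschitz (so the Baillon--Haddad constant is $\beta = 1/2$), and both $d^2_{C_f}$ and $d^2_{C_g}$ together with their gradients vanish at any $x^\ast \in C_f \cap C_g$.

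First I would record the two resolvent identities $z - x_g = \gamma_g \nabla g(x_g)$ and $x_g - x_f = \gamma_f \nabla f(x_f) + \gamma_g \nabla g(x_g)$, which follow directly from Part~\ref{prop:basicprox:part:optprox} of Proposition~\ref{prop:basicprox} applied with the (possibly different) stepsizes $\gamma_f$ and $\gamma_g$. These are the natural extensions of Lemma~\ref{prop:DRSmainidentity} to the unequal-stepsize setting, and in particular yield the key equality $\|x_f - x_g\|^2 = \|\gamma_f \nabla f(x_f) + \gamma_g \nabla g(x_g)\|^2$. Using $z^+ - z = 2\lambda(x_f - x_g)$, a routine expansion converts the right-hand side of~\eqref{eq:DRSupperfeas:eq:main} into $4\lambda\bigl[\langle z - x^\ast,\, \gamma_f \nabla f(x_f) + \gamma_g \nabla g(x_g)\rangle - \|x_f - x_g\|^2\bigr]$.

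The algebraic core of the proof is then to decompose $z - x^\ast = (z - x_g) + (x_g - x_f) + (x_f - x^\ast)$, substitute the two resolvent identities for the first two pieces, and expand the inner product term by term. The cross terms assemble exactly into $\|\gamma_f \nabla f(x_f) + \gamma_g \nabla g(x_g)\|^2$, which cancels the $-\|x_f - x_g\|^2$ term and leaves only $\gamma_f \langle \nabla f(x_f), x_f - x^\ast\rangle + \gamma_g \langle \nabla g(x_g), x_g - x^\ast\rangle$. The last step is to apply Baillon--Haddad cocoercivity: since $\nabla f$ is $2$-Lipschitz and $\nabla f(x^\ast) = 0$, we get $\langle \nabla f(x_f), x_f - x^\ast\rangle \geq (1/2)\|\nabla f(x_f)\|^2 = 2 d^2_{C_f}(x_f)$, and likewise for $g$, which combine to give the desired factor $8\lambda(\gamma_f d^2_{C_f}(x_f) + \gamma_g d^2_{C_g}(x_g))$.

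The only real subtlety is bookkeeping rather than any single hard inequality: one must verify that the quadratic residues arising from the three-way decomposition sum to precisely $\|x_f - x_g\|^2$, neither more nor less. This exact cancellation is why the stepsizes $\gamma_f$ and $\gamma_g$ appear individually as the weights in~\eqref{eq:DRSupperfeas:eq:main} (rather than a single common $\gamma$ as in Proposition~\ref{prop:DRSupper}), and why the inequality is tight: the cocoercivity constant $1/2$ is matched exactly by the factor $2$ in $\nabla d_C^2 = 2(I_\cH - P_C)$.
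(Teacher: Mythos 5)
Your proof is correct, but it takes a different route from the paper's. The paper proves this in two lines by specializing the general upper fundamental inequality (Proposition~\ref{prop:DRSupper}) with $\gamma = 1$ to the rescaled functions $f' = \gamma_f d^2_{C_f}$ and $g' = \gamma_g d^2_{C_g}$: since $\nabla f'$ and $\nabla g'$ are $2\gamma_f$- and $2\gamma_g$-Lipschitz, the auxiliary terms evaluate exactly to $S_{f'}(x_f, x^\ast) = \gamma_f d^2_{C_f}(x_f)$ and $S_{g'}(x_g, x^\ast) = \gamma_g d^2_{C_g}(x_g)$, and since $d^2_{C_f}(x^\ast) = d^2_{C_g}(x^\ast) = 0$ the objective-error terms double up with the $S$ terms, turning the generic factor $4\lambda$ into the factor $8\lambda$. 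You instead rebuild the inequality from scratch: the resolvent identities $z - x_g = \gamma_g \nabla g(x_g)$ and $x_g - x_f = \gamma_f \nabla f(x_f) + \gamma_g \nabla g(x_g)$, the three-way decomposition of $z - x^\ast$, the exact cancellation of $\|x_f - x_g\|^2$ against the cross terms, and a final appeal to Baillon--Haddad cocoercivity with $\nabla f(x^\ast) = \nabla g(x^\ast) = 0$. I verified the algebra: the right-hand side of~\eqref{eq:DRSupperfeas:eq:main} does reduce identically to $4\lambda\bigl(\gamma_f \dotp{\nabla f(x_f), x_f - x^\ast} + \gamma_g \dotp{\nabla g(x_g), x_g - x^\ast}\bigr)$, and cocoercivity with constant $1/2$ gives each inner product a lower bound of $2$ times the corresponding squared distance, yielding the factor $8\lambda$. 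The underlying mechanism is the same in both arguments --- the extra factor of two comes from the cocoercivity term attached to a $2$-Lipschitz gradient that vanishes at $x^\ast$ --- but your version is self-contained and makes the exact cancellation transparent, at the cost of essentially re-deriving the relevant special case of Proposition~\ref{prop:DRSupper}; the paper's version is shorter and cleanly isolates why the unequal stepsizes $\gamma_f, \gamma_g$ can simply be absorbed into the functions.
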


We are now ready to prove the linear convergence of Algorithm~\eqref{eq:DRSfeasibilitygamma} whenever $\{C_f, C_g\}$ is (boundedly) linearly regular.  The proof is a consequence of the upper inequality in Proposition~\ref{eq:DRSupperfeas}.

\begin{theorem}[Linear convergence: Feasibility for two sets]\label{thm:linearfeasibility}
Suppose that $(z^j)_{j \geq 0}$ is generated by the iteration in Equation~\eqref{eq:DRSfeasibilitygamma}, and that $C_f$ and $C_g$ are (boundedly) linearly regular. Let $\rho > 0$ and $\mu_\rho > 0$ be such that $(z^j)_{j \geq 0} \subseteq B(0, \rho)$ and the inequality
\begin{align*}
d_{C_f \cap C_g}(x) &\leq \mu_\rho\max\{ d_{C_f}(x), d_{C_g}(x)\}
\end{align*}
holds  for all $x \in B(0, \rho)$. Then $(z^j)_{j \geq 0}$ satisfies the following relation: for all $k \geq 0$,
\begin{align}\label{thm:linearfeasibility:eq:decrease}
d_{C_f \cap C_g}(z^{k+1}) &\leq C(\gamma_{f, k}, \gamma_{g, k}, \lambda_k, \mu_\rho) \times d_{C_f \cap C_g}(z^k)
\end{align}
where
\begin{align*}
C(\gamma_{f, k}, \gamma_{g, k}, \lambda_k, \mu_\rho) &:= \left(1-\frac{4\lambda_k\min\{{\gamma_{g, k}}/{(2\gamma_{g, k} + 1)^2}, {\gamma_{f, k}}/{(2\gamma_{f, k} + 1)^2}\}}{\mu_\rho^2\max\{{16\gamma_{g, k}^2}/{(2\gamma_{g, k}+1)^2}, 1\}} \right)^{{1}/{2}}.
\end{align*}

In particular, if $\overline{C} = \sup_{j \geq 0}C(\gamma_{f, j}, \gamma_{g, j}, \lambda_j, \mu) < 1$, then $(z^j)_{j \geq 0}$ converges linearly to a point in $x \in C_f \cap C_g$ with rate $\overline{C}$, and
\begin{align}
\|z^k - x\| &\leq 2d_{C_f\cap C_g}(z^0) \prod_{i=0}^k C(\gamma_{f, i}, \gamma_{g, i}, \lambda_i, \mu).
\end{align}
\end{theorem}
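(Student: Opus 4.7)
The plan is to derive the contraction~\eqref{thm:linearfeasibility:eq:decrease} by combining three ingredients---the fundamental upper inequality of Proposition~\ref{eq:DRSupperfeas}, the closed form of $\prox_{\gamma d^2_C}$ given by Proposition~\ref{prop:factsaboutdistancesquared}, and the bounded linear regularity of $\{C_f, C_g\}$---and then to upgrade the resulting contraction of $d_{C_f\cap C_g}(z^k)$ to convergence of the entire sequence $(z^j)$ via Fej\'er monotonicity with respect to $C_f\cap C_g$.

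First I would apply Proposition~\ref{eq:DRSupperfeas} with the specific choice $x^\ast := P_{C_f\cap C_g}(z^k)$. Using $\|z^k-x^\ast\|^2=d_{C_f\cap C_g}^2(z^k)$, $\|z^{k+1}-x^\ast\|^2\geq d_{C_f\cap C_g}^2(z^{k+1})$, and $(1-1/\lambda_k)\|z^{k+1}-z^k\|^2\leq 0$ (since $\lambda_k\in(0,1]$), this yields
\[
d_{C_f\cap C_g}^2(z^{k+1})\leq d_{C_f\cap C_g}^2(z^k)-8\lambda_k\bigl(\gamma_{f,k}\,d_{C_f}^2(x_f^k)+\gamma_{g,k}\,d_{C_g}^2(x_g^k)\bigr).
\]
To turn the subtracted term into a multiple of $d_{C_f\cap C_g}^2(z^k)$, the identity $\prox_{\gamma d^2_C}=(2\gamma+1)^{-1}I_{\cH}+2\gamma(2\gamma+1)^{-1}P_C$ places $x_g^k$ on the segment $[z^k,P_{C_g}(z^k)]$, so $P_{C_g}(x_g^k)=P_{C_g}(z^k)$ and
\[
d_{C_g}(x_g^k)=\frac{d_{C_g}(z^k)}{2\gamma_{g,k}+1},\qquad \|x_g^k-z^k\|=2\gamma_{g,k}\,d_{C_g}(x_g^k),
\]
together with the analogous $d_{C_f}(x_f^k)=d_{C_f}(2x_g^k-z^k)/(2\gamma_{f,k}+1)$. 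A triangle inequality through the reflection $2x_g^k-z^k$ then bounds $d_{C_f}(z^k)\leq 4\gamma_{g,k}\,d_{C_g}(x_g^k)+(2\gamma_{f,k}+1)\,d_{C_f}(x_f^k)$. Squaring via $(a+b)^2\leq 2a^2+2b^2$, invoking bounded linear regularity $d_{C_f\cap C_g}^2(z^k)\leq\mu_\rho^2\max\{d_{C_f}^2(z^k),d_{C_g}^2(z^k)\}$, and factoring $\gamma_{f,k}$ and $\gamma_{g,k}$ out of the right-hand side would produce
\[
d_{C_f\cap C_g}^2(z^k)\leq\frac{2\mu_\rho^2\max\bigl\{16\gamma_{g,k}^2/(2\gamma_{g,k}+1)^2,\,1\bigr\}}{\min\bigl\{\gamma_{f,k}/(2\gamma_{f,k}+1)^2,\,\gamma_{g,k}/(2\gamma_{g,k}+1)^2\bigr\}}\bigl(\gamma_{f,k}\,d_{C_f}^2(x_f^k)+\gamma_{g,k}\,d_{C_g}^2(x_g^k)\bigr),
\]
which substituted back produces exactly the contraction constant $C(\gamma_{f,k},\gamma_{g,k},\lambda_k,\mu_\rho)^2$.

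For the convergence to a specific point $x\in C_f\cap C_g$, applying Proposition~\ref{eq:DRSupperfeas} to an arbitrary $x^\ast\in C_f\cap C_g$ and using $\lambda_k\in(0,1]$ immediately gives $\|z^{k+1}-x^\ast\|\leq\|z^k-x^\ast\|$; thus $(z^j)$ is Fej\'er monotone with respect to $C_f\cap C_g$. Specializing $x^\ast=P_{C_f\cap C_g}(z^k)$ with the triangle inequality then yields $\|z^m-z^k\|\leq 2\,d_{C_f\cap C_g}(z^k)$ for every $m\geq k$. Iterating~\eqref{thm:linearfeasibility:eq:decrease} with $\overline{C}<1$ forces $d_{C_f\cap C_g}(z^k)\to 0$ linearly, so $(z^j)$ is Cauchy; its limit $x$ lies in $C_f\cap C_g$ by continuity of the distance, and letting $m\to\infty$ in the Cauchy estimate delivers the stated bound on $\|z^k-x\|$.

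The main technical obstacle is the constant bookkeeping in the middle step: the triangle inequality for $d_{C_f}(z^k)$ couples $d_{C_f}(x_f^k)$ with $d_{C_g}(x_g^k)$ through the reflection, and one must bound $\max\{d_{C_f}^2(z^k),d_{C_g}^2(z^k)\}$ by exactly the right multiple of $\gamma_{f,k}\,d_{C_f}^2(x_f^k)+\gamma_{g,k}\,d_{C_g}^2(x_g^k)$; the factor of $2$ from Young's inequality is what ultimately appears as the coefficient of $\mu_\rho^2$ in the theorem's stated constant.
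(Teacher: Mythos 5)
Your proposal is correct and follows essentially the same route as the paper's proof: specialize Proposition~\ref{eq:DRSupperfeas} at $x^\ast = P_{C_f\cap C_g}(z^k)$, drop the nonpositive $(1-1/\lambda_k)$ term, convert $d_{C_g}(x_g^k)$ and $d_{C_f}(x_f^k)$ to $d_{C_g}(z^k)$ and $d_{C_f}(\refl_{\gamma_{g,k}g}(z^k))$ via the proximal identity, bound $d_{C_f}(z^k)$ by a triangle inequality through the reflection, and invoke bounded linear regularity; your constant bookkeeping reproduces the stated $C(\gamma_{f,k},\gamma_{g,k},\lambda_k,\mu_\rho)$ exactly. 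The only (harmless) deviation is that you give a self-contained Fej\'er-monotonicity/Cauchy argument for convergence of $(z^j)_{j\geq 0}$ to a point of $C_f\cap C_g$, where the paper simply cites a standard result.
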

\begin{proof}
For simplicity, throughout the proof we will drop the iteration index $k$ and denote $z^+ := z^{k+1}$ and $z := z^k$, etc.  Now recall the identities:
\begin{align}
x_g = \frac{1}{2\gamma_{g} + 1} z + \frac{2\gamma_{g}}{2\gamma_{g}+1}P_{C_g}(z) &&\mathrm{and} && x_f = \frac{1}{2\gamma_{f} + 1} \refl_{\gamma_{g} g}(z) + \frac{2\gamma_{f}}{2\gamma_{f} + 1} P_{C_f} (\refl_{\gamma_{g} g}(z)).
\end{align}
Thus, $x_g$ is a point on the line segment connecting $P_{C_g}(z)$ and $z$, and $x_f$ is a point on the line segment connecting $\refl_{\gamma_g g}(z)$ and  $P_{C_f}(\refl_{\gamma_g g}(z))$. Hence,  we have the projection identities: $P_{C_g}z = P_{C_g}x_g$ and $P_{C_f}(\refl_{\gamma_{g} g}(z)) = P_{C_f} x_f$.  We can also compute the distances to $C_f$ and $C_g$:
\begin{align}\label{thm:linearfeasibility:eq:switchtoz}
d_{C_g}^2(x_g) = \frac{1}{(2\gamma_{g}+1)^2}d_{C_g}^2(z) && \mathrm{and} && d_{C_f}^2(x_f) = \frac{1}{(2\gamma_{f} + 1)^2}d^2_{C_f}(\refl_{\gamma_{g} g}(z)).
\end{align}

We will now bound $d^2_{C_f}(z)$.  Because $x_g$ is a point on the line segment connecting $z$ and $P_{C_g}(z)$, Equation~\eqref{thm:linearfeasibility:eq:switchtoz} shows that that $\|z - x_g\| = (2\gamma_g/(2\gamma_g + 1) ) d_{C_g}(z)$.  Thus, if $c_1 := c_1(\gamma_{g}) = {4\gamma_{g}}/({2\gamma_{g}+1})$, we have
\begin{align}\label{thm:linearfeasibility:eq:reflbound}
\|z - \refl_{\gamma_{g} g}(z)\| = 2\|z - x_g\| = c_1d_{C_g}(z).
\end{align}
Therefore, because $d_{C_f}$ is $1$-Lipschitz and by the convexity of $(\cdot)^2$,
\begin{align*}
d^2_{C_f}(z) &\leq (\|z - \refl_{\gamma_{g} g}(z)\| + d_{C_f}(\refl_{\gamma_{g} g}(z)))^2 \\
&= (c_1d_{C_g}(z) + d_{C_f}(\refl_{\gamma_{g} g}(z)))^2 \\
&\leq 2\max\{c_1^2, 1\}(d^2_{C_g}(z) + d^2_{C_f}(\refl_{\gamma_{g} g}(z))). \numberthis \label{thm:linearfeasibility:eq:upperboundztoc1}
\end{align*}

Now we will simplify the upper bound in Equation~\eqref{eq:DRSupperfeas:eq:main} by using Equation~\eqref{thm:linearfeasibility:eq:switchtoz}
\begin{align*}
8\lambda\left(\frac{\gamma_{g}}{(2\gamma_{g} + 1)^2}d^2_{C_f}(\refl_{\gamma_{g} g}(z)) + \frac{\gamma_{f}}{(2\gamma_{f} + 1)^2}d^2_{C_g}(z)\right) &+ \|z^{+} - x\|^2 + \left(\frac{1}{\lambda} - 1\right)\|z^{+} - z\|^2 \leq \|z - x\|^2. \numberthis \label{thm:linearfeasibility:eq:upper2}
\end{align*}
Because ${1}/({2\max\{c_1^2, 1\}}) < 1$, we have
\begin{align*}
&8\lambda\left(\frac{\gamma_{g}}{(2\gamma_{g} + 1)^2}d^2_{C_f}(\refl_{\gamma_{g} g}(z)) + \frac{\gamma_{f}}{(2\gamma_{f} + 1)^2}d^2_{C_g}(z)\right)\\
&\geq 8\lambda\min\left\{\frac{\gamma_{g}}{(2\gamma_{g} + 1)^2}, \frac{\gamma_{f}}{(2\gamma_{f} + 1)^2}\right\}\left(d^2_{C_f}(\refl_{\gamma_{g} g}(z)) + d^2_{C_g}(z)\right) \\
&\stackrel{\eqref{thm:linearfeasibility:eq:upperboundztoc1}}{\geq}\frac{8\lambda\min\{{\gamma_{g}}/{(2\gamma_{g} + 1)^2}, {\gamma_{f}}/{(2\gamma_{f} + 1)^2}\}}{2\max\{c_1^2, 1\}} \max\{d_{C_f}^2(z), d^2_{C_g}(z)\}.  \numberthis \label{thm:linearfeasibility:eq:lower}
\end{align*}

Now, recall the bounded linear regularity property: for all $x \in B(0, \rho)$,
\begin{align*}
d_{C_f \cap C_g}(x) &\leq \mu_\rho\max\{ d_{C_f}(x), d_{C_g}(x)\}.
\end{align*}
Thus, for all $x \in C_f \cap C_g$, the lower bound in Equation~\eqref{thm:linearfeasibility:eq:lower} shows that (where we use $(1/\lambda - 1) \geq 0$ in Equation~\eqref{thm:linearfeasibility:eq:upper2})
\begin{align*}
\frac{4\lambda\min\{{\gamma_{g}}/{(2\gamma_{g} + 1)^2}, {\gamma_{f}}/{(2\gamma_{f} + 1)^2}\}}{\mu_\rho^2\max\{c_1^2, 1\}} d^2_{C_f\cap C_g}(z) + \|z^{+} - x\|^2&\stackrel{\eqref{thm:linearfeasibility:eq:upper2}}{\leq} \|z - x\|^2.
\end{align*}
Hence, if we define $$C(\gamma_{f}, \gamma_{g}, \lambda, \mu_\rho) = \left(1-\frac{4\lambda\min\{{\gamma_{g}}/{(2\gamma_{g} + 1)^2}, {\gamma_{f}}/{(2\gamma_{f} + 1)^2}\}}{\mu_\rho^2\max\{c_1^2, 1\}} \right)^{{1}/{2}}$$ and $ x= P_{C_f \cap C_g}(z)$, then $d_{C_f \cap C_g}(z) = \|z - x\|$ and $d_{C_f \cap C_g}(z^{+}) \leq \|z^{+} - x\|$. Therefore,
\begin{align}
d_{C_f\cap C_g}(z^{+}) &\leq C(\gamma_{f}, \gamma_{g}, \lambda, \mu_\rho)d_{C_f\cap C_g}(z).
\end{align}

Linear convergence of $(z^j)_{j \geq 0}$ to a point in $C_f \cap C_g$ follows from \cite[Theorem 5.12]{bauschke2011convex}. The rate follows from Equation~\eqref{thm:linearfeasibility:eq:decrease}.
\qed\end{proof}

\begin{remark}
The recent papers \cite{bauschke2014linear,phan2014linear} have proved linear convergence of DRS applied to $f = \iota_{C_f}$ and $g = \iota_{C_g}$ under the same bounded linear regularity assumption on the pair $\{C_f, C_g\}$. In \cite{bauschke2014linear}, the proof uses the FPR to bound the distance of $z^k$ to the fixed point set of $\TPRS$.  Note that for any closed convex set $C$, we have the limit: $\prox_{\gamma d_{C}^2}(x) \rightarrow P_{C}(x)$ as $\gamma \rightarrow \infty$. Thus, the results of \cite{bauschke2014linear} and \cite{phan2014linear} can be seen as the limiting case of our results, but cannot be recovered from Theorem~\ref{thm:linearfeasibility}. Indeed, for any positive $\lambda$ and $\mu$, we have the limit: $C(\gamma', \gamma, \lambda, \mu) {\rightarrow} 1$, as $\gamma, \gamma' \rightarrow \infty.$
\end{remark}
\begin{remark}
The constant $C(\gamma, \gamma', \lambda, \mu)$ has the following form:
\begin{align*}
C(\gamma', \gamma, \lambda, \mu) &=
\begin{cases}
 \left(1 - \frac{\lambda(2\gamma + 1)^2\min\left\{{\gamma}/{(2\gamma + 1)^2}, {\gamma'}/{(2\gamma'  +1 )^2}\right\}}{4\gamma^2\mu^2}\right)^{{1}/{2}}, & \mbox{if } \gamma \geq \frac{1}{2}; \\
\left(1 - \frac{4 \lambda\min\left\{{\gamma}/{(2\gamma + 1)^2}, {\gamma'}/{(2\gamma'  +1 )^2}\right\}}{\mu^2}\right)^{{1}/{2}}, & \mbox{otherwise.}
\end{cases}
\end{align*}
For fixed positive $\gamma, \lambda$ and $\mu$, the function $C(\gamma', \gamma, \lambda, \mu)$ is minimized when $\gamma' = {1}/{2}$. Furthermore, it follows that that $C({1}/{2}, \gamma, \lambda, \mu)$ is minimized over $\gamma$, at $\gamma = {1}/{2}$. Finally, note that $C(\gamma', \gamma, \lambda, \mu)$ is monotonically decreasing in $\lambda$ and monotonically increasing in $\mu$.  Thus, in view of Corollary~\ref{cor:AP}, we achieve the minimal constant for MAP: $C({1}/{2}, {1}/{2}, 1, \mu) = \left(1 - {1}/({2\mu^2)}\right)^{{1}/{2}}$.
\end{remark}

We can use Theorem~\ref{thm:linearfeasibility} to deduce the linear convergence of MAP and give an explicit rate.  In \cite[Theorem 3.15]{deutsch2008rate}, the authors show that $\mu$-linear regularity of a finite collection of sets is equivalent to the linear convergence of the method of cyclic projections applied to these sets and, they derive the rate $\left(1 - {1}/({8\mu^2})\right)^{{1}/{2}}$.  Corollary~\ref{cor:AP} is a special case of one direction of this result but with a better rate. It is not clear if the rate in \cite[Theorem 3.15]{deutsch2008rate} can be improved for the general cyclic projections algorithm.  The rate we show in Corollary~\ref{cor:AP} appears in \cite[Corollary 3.14]{bauschke1993convergence} under the same assumptions.

\begin{corollary}[Convergence of MAP] \label{cor:AP}
Let $(z^j)_{j \geq 0}$ be generated by the iteration in Equation~\eqref{eq:DRSfeasibilitygamma} with  $\gamma_{f, k} \equiv \gamma_{g, k} \equiv {1}/{2}$ and $\lambda_k \equiv1$. Then for all $k \geq 0$, $z^{k+1} = P_{C_f}P_{C_g}z^k$. Thus, MAP is a special case of PRS.   Consequently, under the assumptions of Theorem~\ref{thm:linearfeasibility}, the iterates of MAP converge linearly to a point in the intersection of $C_f \cap C_g$ with rate $\left(1 - {1}/{\mu_\rho^2}\right)^{{1}/{2}}$.
\end{corollary}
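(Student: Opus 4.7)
The plan is to proceed in two stages: first, show algebraically that the iteration in Equation~\eqref{eq:DRSfeasibilitygamma} with $\gamma_{f,k}\equiv\gamma_{g,k}\equiv 1/2$ and $\lambda_k\equiv 1$ coincides exactly with the update $z^{k+1}=P_{C_f}P_{C_g}z^k$; then, specialize the linear rate constant from Theorem~\ref{thm:linearfeasibility} to these parameters.

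For the first stage, I will invoke the proximal identity in Part~3 of Proposition~\ref{prop:factsaboutdistancesquared} with $\gamma=1/2$, which collapses to $\prox_{(1/2)d_C^2}=\frac{1}{2}I_{\cH}+\frac{1}{2}P_C$. Plugging this into the definition of $x_g^k$ gives $x_g^k=\frac{1}{2}z^k+\frac{1}{2}P_{C_g}(z^k)$, and consequently $2x_g^k-z^k=P_{C_g}(z^k)$; i.e., the reflection operator $\refl_{(1/2)g}$ reduces to $P_{C_g}$ on these points. Applying the same proximal formula to $x_f^k$ yields $x_f^k=\frac{1}{2}P_{C_g}(z^k)+\frac{1}{2}P_{C_f}P_{C_g}(z^k)$. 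Substituting into $z^{k+1}=z^k+2(x_f^k-x_g^k)$ produces a telescoping cancellation of $z^k$ and $P_{C_g}(z^k)$, leaving $z^{k+1}=P_{C_f}P_{C_g}(z^k)$, as claimed.

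For the second stage, linear convergence and the rate follow immediately from Theorem~\ref{thm:linearfeasibility} once the constant $C(1/2,1/2,1,\mu_\rho)$ is computed. Writing out the constituents: $\gamma/(2\gamma+1)^2=1/8$ for both $\gamma_f$ and $\gamma_g$, so the minimum in the numerator equals $1/8$; and $16\gamma_g^2/(2\gamma_g+1)^2=1$, so the max in the denominator equals $1$. Thus the constant evaluates to $(1-1/(2\mu_\rho^2))^{1/2}$ (matching the Remark after the theorem); convergence of $(z^j)_{j\ge0}$ to a point in $C_f\cap C_g$ then follows from the sup-of-$C_j$ being strictly below $1$ as in the conclusion of Theorem~\ref{thm:linearfeasibility}.

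The only genuinely non-routine part is the bookkeeping in Stage~1: one must be careful that the ``reflection'' $\refl_{(1/2)g}(z^k)$ equals the honest projection $P_{C_g}(z^k)$, which relies crucially on the specific value $\gamma=1/2$ (the ``overrelaxation'' in reflection exactly matches the proximal averaging weight). After that, the rest is a direct substitution into the already-proved Theorem~\ref{thm:linearfeasibility}; no additional estimates or regularity arguments are needed.
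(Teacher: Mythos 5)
Your Stage~1 is correct and is exactly the paper's computation: with $\gamma=1/2$ the proximal identity gives $x_g^k=\tfrac12 z^k+\tfrac12 P_{C_g}z^k$, hence $\refl_{(1/2)g}(z^k)=P_{C_g}z^k$, and similarly $z^{k+1}=\refl_{(1/2)f}(P_{C_g}z^k)=P_{C_f}P_{C_g}z^k$. The problem is Stage~2. Direct substitution into Theorem~\ref{thm:linearfeasibility} yields, as you compute, $C(1/2,1/2,1,\mu_\rho)=\left(1-1/(2\mu_\rho^2)\right)^{1/2}$. But the corollary claims the strictly better rate $\left(1-1/\mu_\rho^2\right)^{1/2}$, so your argument proves a weaker statement than the one asserted; you even flag that your constant ``matches the Remark after the theorem,'' which is precisely the unimproved constant.

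The missing idea is that for MAP the iterates satisfy $z^k=P_{C_f}P_{C_g}z^{k-1}\in C_f$ for all $k\ge 1$, so $d_{C_f}(z^k)=0$. This lets you revisit the one lossy step in the proof of Theorem~\ref{thm:linearfeasibility}, namely inequality \eqref{thm:linearfeasibility:eq:upperboundztoc1}, where the bound $d^2_{C_f}(z)\leq\left(c_1 d_{C_g}(z)+d_{C_f}(\refl_{\gamma_g g}(z))\right)^2\leq 2\max\{c_1^2,1\}\left(d^2_{C_g}(z)+d^2_{C_f}(\refl_{\gamma_g g}(z))\right)$ is used. Since $d_{C_f}(z^k)=0$ one may take $c_1=0$ there, so the cross term and hence the factor $2\max\{c_1^2,1\}=2$ disappear: one gets $\max\{d^2_{C_f}(z),d^2_{C_g}(z)\}\leq d^2_{C_g}(z)+d^2_{C_f}(\refl_{\gamma_g g}(z))$ with constant $1$. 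Rerunning the same chain of inequalities then doubles the contraction coefficient from $1/(2\mu_\rho^2)$ to $1/\mu_\rho^2$, which is what the corollary states. Without this re-examination of the proof of Theorem~\ref{thm:linearfeasibility} (it is not obtainable as a black-box consequence of the theorem's statement), the claimed rate is not established.
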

\begin{proof}
Notice that $x_g^k = ({1}/{2})z^k + ({1}/{2})P_{C_g}z^k$ and $\refl_{({1}/{2})g}(z^k) = P_{C_g}z^k$.  Similarly, $x_f^k = ({1}/{2})P_{C_g}(z^k) + ({1}/{2})P_{C_f}P_{C_g}z^k$ and $z^{k+1} = \refl_{({1}/{2}) f}(P_{C_g}z^k) = P_{C_f}P_{C_g}z^k$.

We see that $C({1}/{2}, {1}/{2}, 1, \mu) = \left(1 - {1}/({2\mu_\rho^2})\right)^{{1}/{2}}$. We can strengthen this rate to $\left(1 - {1}/{\mu_\rho^2}\right)^{{1}/{2}}$ by observing that in Equation~\eqref{thm:linearfeasibility:eq:upperboundztoc1} we have $d_{C_f}(z^k) = 0$, and so we can set $c_1 = 0$. The proof then follows the same argument.
\qed\end{proof}

\begin{remark}
If $C_f$ and $C_g$ are closed subspaces with Friedrichs angle $\cos^{-1}(c_F)$, \cite[Corollary 11]{bauschke1999strong} shows that $\mu \leq {2}/{\sqrt{1 - c_F}}$. Therefore, Corollary~\ref{cor:AP} predicts that iterates of MAP converges with rate no less than $(({3 + c_F})/{4})^{1/2}$. The actual rate for this problem is $c_F^2$ \cite{aronszajn1950theory,kayalar1988error}. See \cite[Section 7]{bauschke2013rate} for a comparison between DRS and MAP for two subspaces.
\end{remark}

With this interpretation of MAP we can examine the inconsistent case,  $C_f \cap C_g = \emptyset$, from a different perspective than the current literature.  A part of the following result appeared in \cite[Theorem 4.8]{bauschke1994dykstra}.  In particular, if $x$ satisfies Equation~\eqref{cor:AP2:eq:gap}, then $P_{C_f}x - P_{C_g}x$ is the gap vector of \cite[Theorem 4.8]{bauschke1994dykstra}.
\begin{corollary}[Convergence of MAP: infeasible case]\label{cor:AP2}
Let $(z^j)_{j \geq 0}$ be generated by MAP, and suppose that $C_f \cap C_g = \emptyset$.  If there exists $x \in \cH$ such that
\begin{align}\label{cor:AP2:eq:gap}
x - P_{C_f}x = P_{C_g}x - x,
\end{align}
then $(z^j)_{j \geq 0}$ converges weakly to a point in the following set:
\begin{align}\label{eq:APfixedpoints}
\{ P_{C_f}x \mid x\in \cH, x - P_{C_f}x = P_{C_g}x - x\} \subseteq C_f,
\end{align}
with FPR rate $\|z^{k+1} - z^k\|^2 = o\left({1}/({k+1})\right)$. Furthermore, if $x$ satisfies Equation~\eqref{cor:AP2:eq:gap}, then
\begin{align}\label{cor:AP2:eq:sum}
\sum_{i=0}^\infty \left(\left\|\frac{1}{2}(z^i - P_{C_g}z^i) - (x - P_{C_g}x)\right\|^2 + \left\|\frac{1}{2}(P_{C_g}z^i - P_{C_f}P_{C_g}z^i) - (x - P_{C_f}x)\right\|^2 \right) < \infty.
\end{align}
In particular, the vector $P_{C_g}z^k - P_{C_f}P_{C_g}z^k$ strongly converges to the gap vector $P_{C_g} x - P_{C_f} x$, and  $$\min_{i = 0, \cdots, k}\left\{\|(P_{C_g}z^{i} - P_{C_f}z^{i}) - (P_{C_g} x - P_{C_f} x)\|^2\right\} = o(1/(k+1)).$$
\end{corollary}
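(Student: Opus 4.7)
The key observation is that the hypothesis, despite $C_f \cap C_g = \emptyset$, produces a genuine fixed point of the MAP operator $T := P_{C_f}P_{C_g}$, which brings the problem back into the Fej\'er-monotone framework. Given such an $x$, set $z^\ast := P_{C_f}x$, $y^\ast := P_{C_g}x$, and $v := y^\ast - z^\ast = P_{C_g}x - P_{C_f}x$; the hypothesis says $x - z^\ast = -(x - y^\ast) = \tfrac{1}{2}v$. The projection conditions $P_{C_f}x = z^\ast$ and $P_{C_g}x = y^\ast$ translate, after substituting $x - z^\ast = \tfrac{1}{2}v$ and $x - y^\ast = -\tfrac{1}{2}v$, into
\begin{align*}
\langle v, y - z^\ast\rangle \leq 0 \;\;\forall y \in C_f \quad \mathrm{and} \quad \langle v, w - y^\ast\rangle \geq 0 \;\;\forall w \in C_g,
\end{align*}
which are exactly the variational characterizations of $z^\ast = P_{C_f}y^\ast$ and $y^\ast = P_{C_g}z^\ast$. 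Hence $Tz^\ast = z^\ast$.

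Writing $y^k := P_{C_g}z^k$ so that $z^{k+1} = P_{C_f}y^k$, I would apply firm nonexpansiveness of $P_{C_g}$ at the pair $(z^k, z^\ast)$ and of $P_{C_f}$ at $(y^k, y^\ast)$; using $z^\ast - y^\ast = -v$, these give
\begin{align*}
\|y^k - y^\ast\|^2 + \|(z^k - y^k) + v\|^2 &\leq \|z^k - z^\ast\|^2, \\
\|z^{k+1} - z^\ast\|^2 + \|(y^k - z^{k+1}) - v\|^2 &\leq \|y^k - y^\ast\|^2.
\end{align*}
Adding and telescoping over $k$ yields both the Fej\'er monotonicity $\|z^{k+1} - z^\ast\| \leq \|z^k - z^\ast\|$ and
\begin{align*}
\sum_{k=0}^\infty \left(\|z^k - y^k + v\|^2 + \|y^k - z^{k+1} - v\|^2\right) \leq \|z^0 - z^\ast\|^2.
\end{align*}
Since $x - P_{C_g}x = -\tfrac{1}{2}v$ and $x - P_{C_f}x = \tfrac{1}{2}v$, factoring out $1/4$ gives exactly the summability claim~\eqref{cor:AP2:eq:sum}.

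Because each summand is nonnegative, individual terms tend to zero, so $y^k - z^{k+1} \to v$ strongly, i.e., $P_{C_g}z^k - P_{C_f}P_{C_g}z^k \to v$; since $z^k \in C_f$ for $k \geq 1$ forces $P_{C_f}z^k = z^k$, and since $z^k - y^k \to -v$, I obtain $P_{C_g}z^k - P_{C_f}z^k \to v$, with the best-iterate $o(1/(k+1))$ rate following from Fact~\ref{lem:sumsequence}(\ref{lem:sumsequence:part:nonmono}). For the FPR, the triangle inequality yields $\|z^{k+1} - z^k\|^2 \leq 2\|y^k - z^{k+1} - v\|^2 + 2\|z^k - y^k + v\|^2$, so $\sum_k \|z^{k+1} - z^k\|^2 < \infty$; combined with the monotonicity of $\|z^{k+1} - z^k\|$ from Fact~\ref{fact:averagedconvergence}(\ref{fact:averagedconvergence:eq:FPRmono}), Fact~\ref{lem:sumsequence}(\ref{lem:sumsequence:part:main}) delivers $\|z^{k+1} - z^k\|^2 = o(1/(k+1))$.

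Finally, Fej\'er monotonicity toward $\Fix T$ together with $Tz^k - z^k \to 0$ and the demiclosedness of $I - T$ (which holds because $T$ is an averaged composition of firmly nonexpansive maps) yields, by the standard Opial argument, that $(z^k)$ converges weakly to some $\bar z \in \Fix T$. To identify $\bar z$ as $P_{C_f}\bar x$ for some $\bar x$ satisfying the midpoint condition, I reverse the first step: set $\bar y := P_{C_g}\bar z$ and $\bar x := \tfrac{1}{2}(\bar z + \bar y)$; the equations $\bar y = P_{C_g}\bar z$ and $\bar z = P_{C_f}\bar y$ imply, by the same variational characterizations as before, that $P_{C_f}\bar x = \bar z$ and $P_{C_g}\bar x = \bar y$, whence $\bar x - P_{C_f}\bar x = \tfrac{1}{2}(\bar y - \bar z) = P_{C_g}\bar x - \bar x$, placing $\bar z$ in the set~\eqref{eq:APfixedpoints}. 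The only real subtlety is recognizing the hypothesis as precisely the disguised fixed-point condition for MAP (and symmetrically reversing the construction to identify the weak limit); once that bijection is noticed, the rest of the argument is driven mechanically by firm nonexpansiveness.
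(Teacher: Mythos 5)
Your proof is correct, but it takes a genuinely different route from the paper's. The paper recognizes the hypothesis $x - P_{C_f}x = P_{C_g}x - x$ as the statement $x \in \zer(\nabla d_{C_f}^2 + \nabla d_{C_g}^2)$ and then leans entirely on machinery already built for relaxed PRS applied to $f = d_{C_f}^2$, $g = d_{C_g}^2$: Corollary~\ref{cor:AP} identifies MAP with PRS at $\gamma = 1/2$, $\lambda_k \equiv 1$; the summability~\eqref{cor:AP2:eq:sum} is read off from Theorem~\ref{prop:sumauxilliaryterms} because the two summands are exactly the gradient-difference terms $\|\nabla d_{C_g}^2(x_g^k) - \nabla d_{C_g}^2(x)\|^2 + \|\nabla d_{C_f}^2(x_f^k) - \nabla d_{C_f}^2(x)\|^2$ (with $\nabla d_C^2 = 2(I - P_C)$ being $2$-Lipschitz, so $\beta = 1/2$); and the FPR rate and weak convergence are quoted from the averagedness of $P_{C_f}P_{C_g}$ and \cite[Proposition 5.15]{bauschke2011convex}. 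You instead bypass the squared-distance reformulation entirely: you verify directly via the variational characterization of projections that $z^\ast = P_{C_f}x$ is a fixed point of $P_{C_f}P_{C_g}$, and then apply firm nonexpansiveness of each projection once per half-step; adding the two resulting inequalities gives Fej\'er monotonicity and the summability~\eqref{cor:AP2:eq:sum} in a single telescoping stroke, and your reverse construction $\bar x = \tfrac{1}{2}(\bar z + P_{C_g}\bar z)$ cleanly establishes the identification $\Fix(P_{C_f}P_{C_g}) = \{P_{C_f}x \mid x - P_{C_f}x = P_{C_g}x - x\}$, which the paper asserts without proof. The paper's version is shorter given its accumulated results; yours is self-contained, makes transparent why the sum has precisely the form~\eqref{cor:AP2:eq:sum} (the summands are the two discarded terms in firm nonexpansiveness, scaled by $1/4$), and proves both inclusions of the fixed-point-set identity explicitly. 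All of your individual steps check out, including the small observation that $P_{C_f}z^i = z^i$ for $i \geq 1$ needed to pass from $P_{C_f}P_{C_g}z^i$ to $P_{C_f}z^i$ in the final best-iterate rate.
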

\begin{proof}
In view of Proposition~\ref{prop:factsaboutdistancesquared}, the condition $x - P_{C_f}x = P_{C_g}x -x$ is equivalent to $ x\in \zer(\nabla d_{C_f}^2 + \nabla d_{C_g}^2)$.  The mapping $\TPRS^{{1}/{2}, {1}/{2}} = P_{C_f}P_{C_g}$ is the composition of $({1}/{2})$-averaged maps, and so it is $\alpha$-averaged for some $\alpha < 1$ \cite[Proposition 4.32]{bauschke2011convex}.  In addition, \cite[Theorem 1]{davis2014convergence} shows that the FPR satisfies $\|z^{k+1} - z^k\|^2 = o\left({1}/({k+1})\right)$. The set in Equation~\eqref{eq:APfixedpoints} is precisely the set of fixed points of $\TPRS$.  Therefore, weak convergence follows from \cite[Proposition 5.15]{bauschke2011convex}.  The sum in Equation~\eqref{cor:AP2:eq:sum} is exactly the sum of derivatives $\|\nabla d_{C_g}^2(x_g^k) - \nabla d_{C_g}^2(x)\|^2 + \|\nabla d_{C_f}^2(x_f^k) - \nabla d_{C_f}^2(x)\|^2$, and so it is finite by Proposition~\ref{prop:sumauxilliaryterms}.  Finally, strong convergence of $P_{C_g}z^k - P_{C_f}P_{C_g}z^k$ to the gap vector follows from the identity $x - P_{C_f}x = (1/2)(P_{C_g} x - P_{C_f} x)$. The rate is a consequence of~Fact~\ref{lem:sumsequence} and Equation~\eqref{cor:AP2:eq:sum}.
\qed\end{proof}

\begin{remark}\label{rem:gapvector}
Note that that the condition $x - P_{C_f}x = x - P_{C_g}x$ is equivalent to $\|P_{C_g}x - P_{C_f}x\|^2 = \min_{y \in \cH} (d_{C_f}^2(y) + d_{C_g}^2(y))=  \min_{x_f \in C_f, x_g \in C_g} \|x_g - x_f\|^2$.  See \cite[Fact 5.1]{bauschke1994dykstra} for conditions that guarantee the infimum is attained in Corollary~\ref{cor:AP2}.
\end{remark}

See Appendix~\ref{app:feasibilitymultiplesets} for the extension of the results of this section to finite collections of sets.

\section{From relaxed PRS to ADMM}\label{sec:DRSADMM}
The relaxed PRS algorithm can be applied to problem \eqref{eq:simplelinearconstrained}.  To this end we define the Lagrangian:
$$\cL_\gamma(x,y;w):=f(x)+g(y)-\dotp{w,Ax+By-b} + \frac{\gamma}{2}\|Ax + By - b\|^2.$$
Section~\ref{sec:DRSADMM} presents Algorithm \ref{alg:DRS} applied to the Lagrange dual of ~\eqref{eq:simplelinearconstrained}, which  reduces to the following algorithm:

\begin{algorithm}[H]
%
\begin{algorithmic}
\Require $w^{-1} \in \cH, x^{-1} = 0, y^{-1} = 0, \lambda_{-1} = \frac{1}{2}$, $\gamma > 0, (\lambda_j)_{j \geq 0} \subseteq (0, 1]$
\For{$k=-1,~0,\ldots$}
  \State $y^{k+1} = \argmin_{y} \cL_\gamma(x^k,y;w^k) + \gamma(2\lambda_k - 1) \dotp{By,  (Ax^{k} + By^{k} -b)}$\;
  \State $w^{k+1} = w^{k} - \gamma (Ax^{k} +  By^{k+1} - b) - \gamma(2\lambda_k - 1)(Ax^{k} + By^{k} - b)$\;
  \State $x^{k+1} = \argmin_{x} \cL_\gamma(x,y^{k+1};w^{k+1})$\;
\EndFor
\end{algorithmic}
\caption{{Relaxed alternating direction method of multipliers (relaxed ADMM)}}
\label{alg:ADMM}
\end{algorithm}
If $\lambda_k\equiv 1/2$, Algorithm \ref{alg:ADMM} recovers the standard ADMM.

It is well known that ADMM is equivalent to DRS applied to the Lagrange dual of Problem~\eqref{eq:simplelinearconstrained} \cite{gabay1983chapter}. Thus, if we let
\begin{align*}
d_f(w) := f^\ast(A^\ast w) && \mathrm{and} && d_g(w) := g^\ast(B^\ast w) - \dotp{w, b},
\end{align*}
then relaxed ADMM is equivalent to relaxed PRS applied to the following problem:
\begin{align}\label{eq:dualproblem2}
\Min_{w \in \cG} & \; d_f(w) + d_g(w).
\end{align}

{We make two assumptions regarding $d_f$ and $d_g$.
\begin{assump}[Solution existence]\label{assump:additivesubdual}
Functions $f, g : \cH \rightarrow (-\infty, \infty]$ satisfy
\begin{align}
\zer(\partial d_f + \partial d_g) \neq \emptyset.
\end{align}
\end{assump}
This is a restatement of Assumption~\ref{assump:additivesub}, which we have used in our analysis of the primal case.

\begin{assump}\label{assump:precomposgradient}
The following differentiation rule holds:
\begin{align*}
\partial d_f(x) = A^\ast \circ (\partial f^\ast) \circ A && \mathrm{and} && \partial d_g(x) = B^\ast \circ (\partial g^\ast) \circ B.
\end{align*}
\end{assump}
See \cite[Theorem 16.37]{bauschke2011convex} for conditions that imply this identity, of which the weakest are  $0 \in \mathrm{sri}(\range(A^\ast) - \dom(f^\ast))$ and $0 \in \mathrm{sri}(\range(B^\ast) - \dom(g^\ast))$, where $\mathrm{sri}$ is the strong relative interior of a convex set.} This assumption may seem strong, but it is standard in the analysis of ADMM because it implies the form in Proposition~\ref{prop:relaxedADMM}.

The next proposition shows how the strong convexity and the differentiability of a closed, proper, and convex function transfer to the dual function.
\begin{proposition}[Strong convexity and differentiability of the conjugate]\label{prop:strongconvextodual}
Suppose that $f : \cH \rightarrow (-\infty, \infty]$ is closed, proper, and convex. Then the following implications hold:
\begin{enumerate}
\item If $f$ is $\mu_f$-strongly convex, then $f^\ast$ is differentiable and $\nabla f$ is $({1}/{\mu_f})$-Lipschitz.
\item If $f$ is differentiable and $\nabla f$ is $({1}/{\beta})$-Lipschitz, then $f^\ast$ is $\beta$-strongly convex.
\end{enumerate}
\end{proposition}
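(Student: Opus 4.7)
The plan is to invoke the standard Fenchel duality machinery, since this proposition is the classical correspondence between strong convexity of $f$ and Lipschitz differentiability of $f^{\ast}$ (see, e.g., \cite[Theorem 18.15 and Proposition 12.60 / Theorem 5.26]{bauschke2011convex}). Both statements can be derived directly from the variational characterization $x \in \partial f^{\ast}(y) \iff y \in \partial f(x)$ combined with monotonicity properties of $\partial f$.

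For Part 1, I would fix $y \in \cH$ and observe that if $f$ is $\mu_f$-strongly convex, then the map $x \mapsto f(x) - \dotp{y,x}$ is coercive and strictly convex, so it has a unique minimizer $x^{\ast}(y)$. By definition of the conjugate, $f^{\ast}(y) = \dotp{y,x^{\ast}(y)} - f(x^{\ast}(y))$ and $x^{\ast}(y) \in \partial f^{\ast}(y)$. Since $\partial f^{\ast}(y) = \{x^{\ast}(y)\}$ is a singleton for every $y$, Fr{\'e}chet differentiability of $f^{\ast}$ follows (e.g., from \cite[Corollary 17.43]{bauschke2011convex}) and $\nabla f^{\ast}(y) = x^{\ast}(y)$. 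To get the $(1/\mu_f)$-Lipschitz bound, I would pick $y_1, y_2 \in \cH$, let $x_i := \nabla f^{\ast}(y_i)$ so that $y_i \in \partial f(x_i)$, and use the $\mu_f$-strong monotonicity of $\partial f$:
\begin{align*}
\mu_f \|x_1 - x_2\|^2 \leq \dotp{y_1 - y_2, x_1 - x_2} \leq \|y_1 - y_2\| \, \|x_1 - x_2\|,
\end{align*}
which yields $\|\nabla f^{\ast}(y_1) - \nabla f^{\ast}(y_2)\| \leq (1/\mu_f) \|y_1 - y_2\|$.

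For Part 2, I would use the Baillon--Haddad theorem, which says that if $f$ is convex, differentiable, and $\nabla f$ is $(1/\beta)$-Lipschitz, then $\nabla f$ is $\beta$-cocoercive:
\begin{align*}
\dotp{\nabla f(x) - \nabla f(y), x - y} \geq \beta \|\nabla f(x) - \nabla f(y)\|^2.
\end{align*}
Then for any $y_1, y_2 \in \cH$ and any $x_i \in \partial f^{\ast}(y_i)$ (equivalently $y_i = \nabla f(x_i)$), applying cocoercivity gives $\dotp{y_1 - y_2, x_1 - x_2} \geq \beta \|y_1 - y_2\|^2$, so $\partial f^{\ast}$ is $\beta$-strongly monotone. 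By the standard characterization of strong convexity via strong monotonicity of the subdifferential (together with $f^{\ast}$ being closed, proper, and convex since $f$ is), this is equivalent to $f^{\ast}$ being $\beta$-strongly convex.

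The only subtle point, and the main thing I would be careful about, is that in Part 2 the subdifferential $\partial f^{\ast}(y)$ may be empty for some $y$ (this happens at any $y$ not in $\range(\nabla f)$). This is harmless: strong monotonicity of $\partial f^{\ast}$ on its effective domain is equivalent to $\beta$-strong convexity of $f^{\ast}$ on all of $\cH$ via the standard extension argument, since a closed proper convex function is strongly convex iff the defining inequality holds on $\dom(\partial f^{\ast})$ (which is dense in $\dom f^{\ast}$). Once this is noted, the whole proposition reduces to two short applications of biconjugacy and either strong monotonicity or Baillon--Haddad.
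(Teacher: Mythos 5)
Your proof is correct. The paper does not actually prove this proposition---its ``proof'' is the single line ``See \cite[Theorem 18.15]{bauschke2011convex}''---and what you have written is essentially the standard argument behind that cited theorem (strong monotonicity of $\partial f$ yields single-valuedness and the $(1/\mu_f)$-Lipschitz bound for $\nabla f^\ast$ in Part 1, and Baillon--Haddad cocoercivity yields $\beta$-strong monotonicity of $\partial f^\ast$, hence $\beta$-strong convexity of $f^\ast$, in Part 2), so you have simply supplied the details the paper outsources; the only cosmetic point is that in Part 1 it is cleaner to establish the Lipschitz continuity of the single-valued selection first and then conclude Fr{\'e}chet (rather than merely G{\^a}teaux) differentiability from it.
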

\begin{proof}
See~\cite[Theorem 18.15]{bauschke2011convex}.
\qed\end{proof}
With Proposition~\ref{prop:strongconvextodual}, we can characterize the strong convexity and differentiability of the dual functions in terms of $A, B$ and $f$ and $g$. We first recall that a linear map $L : \cG \rightarrow \cG$ is \emph{$\alpha$-strongly monotone} if for all $x \in \cG$, the bound  $\dotp{Lx, x}_\cG \geq \alpha \|x\|_\cG^2$ holds.

\begin{proposition}[Strong convexity and differentiability of the dual]\label{prop:stronglymonotone}
The following implications hold:
\begin{enumerate}
\item If $\nabla f$, (respectively $\nabla g$), is $({1}/{\beta})$-Lipschitz and $AA^\ast$ (respectively $BB^\ast$) is $\alpha$-strongly monotone, then $d_f$ (respectively $d_g$) is $\alpha\beta$-strongly convex.
\item If $f$, (respectively $g$) is $\mu$-strongly convex, then $d_f$ (respectively $d_g$) is differentiable and $\nabla d_f$ (respectively $\nabla d_g$) is $({\|A\|^2}/{\mu})$ (respectively $({\|B\|^2}/{\mu})$)-Lipschitz.
\end{enumerate}
\end{proposition}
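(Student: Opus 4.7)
The plan is to derive both parts from Proposition~\ref{prop:strongconvextodual} by transferring the regularity of $f^\ast$ (respectively $g^\ast$) to $d_f(w) = f^\ast(A^\ast w)$ (respectively $d_g$) through the chain rule embodied in Assumption~\ref{assump:precomposgradient}, with the constants $\alpha$ and $\|A\|^2$ entering naturally through the linear precomposition with $A^\ast$.

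For Part 1, first invoke Proposition~\ref{prop:strongconvextodual} to conclude that $(1/\beta)$-Lipschitz continuity of $\nabla f$ forces $f^\ast$ to be $\beta$-strongly convex. Then for any $w_1, w_2 \in \cG$ and any subgradient $p \in \partial f^\ast(A^\ast w_2)$ (so that $Ap \in \partial d_f(w_2)$ via Assumption~\ref{assump:precomposgradient}), the subgradient inequality for strong convexity of $f^\ast$ at $u = A^\ast w_1$, $v = A^\ast w_2$ rewrites as
\begin{equation*}
d_f(w_1) \geq d_f(w_2) + \dotp{Ap, w_1 - w_2} + \frac{\beta}{2}\|A^\ast(w_1 - w_2)\|^2,
\end{equation*}
after moving $A^\ast$ across the linear term via the adjoint identity. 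It remains to control the quadratic term from below, and this is precisely where the strong monotonicity of $AA^\ast$ enters: $\|A^\ast u\|^2 = \dotp{AA^\ast u, u} \geq \alpha\|u\|^2$. This yields the subgradient form of $\alpha\beta$-strong convexity of $d_f$. The statement for $d_g$ is symmetric, the extra affine term $-\dotp{w, b}$ being irrelevant for strong convexity.

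For Part 2, apply Proposition~\ref{prop:strongconvextodual} in the other direction: $\mu$-strong convexity of $f$ makes $f^\ast$ differentiable with $(1/\mu)$-Lipschitz gradient, so Assumption~\ref{assump:precomposgradient} collapses $\partial d_f(w)$ to the single-valued map $w \mapsto A\nabla f^\ast(A^\ast w)$ and hence $d_f$ is differentiable. A three-step chain of norm inequalities
\begin{equation*}
\|\nabla d_f(w_1) - \nabla d_f(w_2)\| \leq \|A\|\,\|\nabla f^\ast(A^\ast w_1) - \nabla f^\ast(A^\ast w_2)\| \leq \frac{\|A\|}{\mu}\|A^\ast(w_1 - w_2)\| \leq \frac{\|A\|^2}{\mu}\|w_1 - w_2\|,
\end{equation*}
using $\|A^\ast\| = \|A\|$ in the last step, then yields the claimed Lipschitz constant. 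The argument for $d_g$ is identical.

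The only real obstacle is careful bookkeeping: tracking which Hilbert space each object inhabits, correctly applying Assumption~\ref{assump:precomposgradient} to exchange $\partial$ with precomposition by $A^\ast$, and invoking the isometry $\|A^\ast\| = \|A\|$ between operator norms. Once these are in place, both conclusions are one-line consequences of Proposition~\ref{prop:strongconvextodual} and the adjoint identity for $AA^\ast$.
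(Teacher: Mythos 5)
Your proof is correct and fills in exactly the argument the paper omits: the paper states only that ``the proof of Proposition~\ref{prop:stronglymonotone} is straightforward'' and points to Proposition~\ref{prop:strongconvextodual}, and your derivation (transfer regularity from $f$ to $f^\ast$, then precompose with $A^\ast$, with $\dotp{AA^\ast u, u} \geq \alpha\|u\|^2$ supplying the constant in Part~1 and $\|A^\ast\| = \|A\|$ supplying it in Part~2) is precisely that route. The only point worth flagging is the orientation of the chain rule: Assumption~\ref{assump:precomposgradient} as printed reads $\partial d_f = A^\ast \circ (\partial f^\ast) \circ A$, whereas your usage $\partial d_f(w) = A\,\partial f^\ast(A^\ast w)$ is the one consistent with the definition $d_f(w) = f^\ast(A^\ast w)$ and with Table~\ref{table:ADMMsubgradients}, so you have silently corrected an apparent typo in the paper rather than introduced an error.
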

The proof of Proposition~\ref{prop:stronglymonotone} is straightforward, so we omit it. We note that $AA^\ast$ and $BB^\ast$ are always $0$-strongly monotone. Thus, we assume that $AA^\ast$ and $BB^\ast$ are $\alpha_A$ and $\alpha_B$-strongly monotone, respectively, while allowing the cases $\alpha_A = 0$ and $\alpha_B= 0$.  In addition, we use the convention that $\tnabla f$ and $\tnabla g$ are always $({1}/{\beta_f})$, and $({1}/{\beta_g})$-Lipschitz, respectively, by allowing the cases $\beta_f = 0$ and $\beta_g = 0$. We carry the following notation throughout the rest of Section~\ref{sec:DRSADMM}:
\begin{align}
\mu_{d_f} = \beta_f \alpha_A \geq 0 && \mathrm{and} && \mu_{d_g} = \beta_g \alpha_B \geq 0.
\end{align}
Thus, $d_f$ and $d_g$ are $\mu_{d_f}$ and $\mu_{d_g}$-strongly convex, respectively. Finally, we always assume that $f$ and $g$ are $\mu_f$ and $\mu_g$-strongly convex, respectively, by allowing $\mu_f=0$ and $\mu_g = 0$. We assume that $\|A\|\|B\| \neq 0$, and denote
\begin{align}
\beta_{d_f} = \frac{\mu_f}{\|A\|^2} \geq 0 && \mathrm{and} && \beta_{d_g} = \frac{\mu_g}{\|B\|^2} \geq 0.
\end{align}
If $\beta_{d_f}$ is strictly positive, then $d_f$ is differentiable and $\nabla d_f$ is $(1/\beta_f)$-Lipschitz. A similar result holds for $d_g$.

Now we apply Algorithm~\ref{alg:DRS} to the dual problem in Equation~\eqref{eq:dualproblem2}. Given $z^0 \in \cH$, Lemma~\ref{prop:DRSmainidentity} shows that we need to compute the following vectors for all $k\geq 0$:
\begin{align}\label{eq:DRSADMM}
\begin{cases}
w_{d_g}^k &= \prox_{\gamma d_g}(z^k); \\
w_{d_f}^k &= \prox_{\gamma d_f}(2w_{d_g}^k - z^k);  \\
z^{k+1} &= z^k + 2\lambda_k(w_{d_f}^k - w_{d_g}^k).
\end{cases}
\end{align}

%
%

A detailed proof of Proposition~\ref{prop:relaxedADMM} recently appeared in \cite[Proposition 11]{davis2014convergence}.
\begin{proposition}[Relaxed ADMM]\label{prop:relaxedADMM}
Let $z^0 \in \cG$, and let $(z^j)_{j \geq 0}$ be generated by the relaxed PRS algorithm applied to the dual formulation in Equation~(\ref{eq:dualproblem2}).  Choose $w_{d_g}^{-1} = z^0, x^{-1} = 0$ and $y^{-1} = 0$ and $\lambda_{-1} = {1}/{2}$. Then we have the following identities starting from $k = -1$:
\begin{align*}
y^{k+1} &= \argmin_{y \in \cH_2} g(y) - \dotp{w_{d_g}^{k},Ax^{k} +  By - b} + \frac{\gamma}{2} \|Ax^k + By - b + (2\lambda_k - 1)(Ax^{k} + By^{k} -b) \|^2; \\
w_{d_g}^{k+1} &= w_{d_g}^{k} - \gamma (Ax^{k} +  By^{k+1} - b) - \gamma(2\lambda_k - 1)(Ax^{k} + By^{k} - b); \\
x^{k+1} &= \argmin_{x \in \cH_1} f(x) - \dotp{w_{d_g}^{k+1}, Ax + By^{k+1} - b} + \frac{\gamma}{2} \|Ax + By^{k+1} - b\|^2; \\
w_{d_f}^{k+1} &= w_{d_g}^{k+1} - \gamma (Ax^{k+1} + By^{k+1} - b).
\end{align*}
\end{proposition}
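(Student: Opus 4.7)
The plan is to apply the relaxed PRS iteration \eqref{eq:DRSADMM} in the dual variables and translate each proximal evaluation into a primal--dual optimality condition. From Assumption~\ref{assump:precomposgradient} we have $\partial d_g(w) = B\,\partial g^\ast(B^\ast w) - b$ and $\partial d_f(w) = A\,\partial f^\ast(A^\ast w)$; combined with Part~\ref{prop:basicprox:part:optprox} of Proposition~\ref{prop:basicprox}, applying $\prox_{\gamma d_g}$ to $z^{k+1}$ produces $w_{d_g}^{k+1}$ together with some $y^{k+1}\in\partial g^\ast(B^\ast w_{d_g}^{k+1})$ such that
\begin{align*}
z^{k+1} - w_{d_g}^{k+1} &= \gamma(By^{k+1} - b),
\end{align*}
and similarly applying $\prox_{\gamma d_f}$ to $2w_{d_g}^{k+1} - z^{k+1}$ produces $w_{d_f}^{k+1}$ with some $x^{k+1}\in\partial f^\ast(A^\ast w_{d_f}^{k+1})$ satisfying
\begin{align*}
(2w_{d_g}^{k+1} - z^{k+1}) - w_{d_f}^{k+1} &= \gamma\,A x^{k+1}.
\end{align*}
Subtracting these gives the central identity $w_{d_f}^{k+1} - w_{d_g}^{k+1} = -\gamma(Ax^{k+1} + By^{k+1} - b)$.

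Next I would eliminate $z$. The iteration $z^{k+1} = z^k + 2\lambda_k(w_{d_f}^k - w_{d_g}^k)$ becomes, via the central identity at step $k$,
\begin{align*}
z^{k+1} &= z^k - 2\lambda_k\gamma(Ax^k + By^k - b).
\end{align*}
Using $z^k = w_{d_g}^k + \gamma(By^k - b)$ together with $w_{d_g}^{k+1} = z^{k+1} - \gamma(By^{k+1} - b)$, a short algebraic rearrangement (splitting $-\gamma(By^{k+1} - b) = -\gamma(Ax^k + By^{k+1} - b) + \gamma Ax^k$ and regrouping the $Ax^k + By^k - b$ terms with coefficient $2\lambda_k - 1$) yields exactly the claimed multiplier update
\begin{align*}
w_{d_g}^{k+1} &= w_{d_g}^k - \gamma(Ax^k + By^{k+1} - b) - \gamma(2\lambda_k - 1)(Ax^k + By^k - b),
\end{align*}
and the $w_{d_f}$ update follows immediately from $w_{d_f}^{k+1} = w_{d_g}^{k+1} - \gamma(Ax^{k+1} + By^{k+1} - b)$.

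To recover the $\argmin$ formulations, I would invoke Fenchel--Young: $y^{k+1}\in\partial g^\ast(B^\ast w_{d_g}^{k+1})$ is equivalent to $B^\ast w_{d_g}^{k+1} \in \partial g(y^{k+1})$. Substituting the $w_{d_g}$-update formula into this inclusion and completing the square in the variable $By$, the inclusion becomes the first-order optimality condition for the claimed minimization of $g(y) - \langle w_{d_g}^k, Ax^k + By - b\rangle + \tfrac{\gamma}{2}\|Ax^k + By - b + (2\lambda_k - 1)(Ax^k + By^k - b)\|^2$. The $x$-update is treated identically, starting from $A^\ast w_{d_f}^{k+1} \in \partial f(x^{k+1})$ and using $w_{d_f}^{k+1} = w_{d_g}^{k+1} - \gamma(Ax^{k+1} + By^{k+1} - b)$ to produce the quadratic term $\tfrac{\gamma}{2}\|Ax + By^{k+1} - b\|^2$.

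The main obstacle is the careful bookkeeping of the base case $k = -1$, since the recurrence naturally describes $(w_{d_g}^{k+1}, w_{d_f}^{k+1})$ in terms of previous primal iterates that do not exist at $k = 0$. The initialization $w_{d_g}^{-1} = z^0$, $x^{-1} = 0$, $y^{-1} = 0$, $\lambda_{-1} = 1/2$ is chosen precisely so that all cross-terms indexed by $(-1)$ vanish: plugging in yields $y^0 = \argmin_y g(y) - \langle z^0, By - b\rangle + \tfrac{\gamma}{2}\|By - b\|^2$ and $w_{d_g}^0 = z^0 - \gamma(By^0 - b)$, which match $\prox_{\gamma d_g}(z^0)$ directly. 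From this base case an induction closes the argument, yielding relaxed ADMM in the form stated.
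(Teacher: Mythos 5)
Your proposal is correct and follows essentially the same route as the paper, which does not prove this proposition inline but defers to \cite[Proposition 11]{davis2014convergence}: you derive the prox optimality conditions $z^{k+1}-w_{d_g}^{k+1}=\gamma(By^{k+1}-b)$ and $(2w_{d_g}^{k+1}-z^{k+1})-w_{d_f}^{k+1}=\gamma Ax^{k+1}$ under Assumption~\ref{assump:precomposgradient}, which are exactly the subgradient identities the paper records in Table~\ref{table:ADMMsubgradients}, and then eliminate $z$ and invoke Fenchel--Young to recover the $\argmin$ steps. The algebra for the multiplier update and the $k=-1$ base case both check out, so no gaps.
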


\begin{remark}
Proposition~\ref{prop:relaxedADMM} proves that $w_{d_f}^{k+1} = w_{d_g}^{k+1} - \gamma (Ax^{k+1} + By^{k+1} - b)$. Recall that by Equation~(\ref{eq:DRSADMM}), $z^{k+1} - z^k = 2\lambda_k(w_{d_f}^{k} - w_{d_g}^{k})$.  Therefore, it follows that
\begin{align}\label{eq:ADMMfeasibilityFPR}
z^{k+1} - z^k &= -2\gamma \lambda_k( Ax^{k} + By^k - b).
\end{align}
\end{remark}

\begin{center}
\begin{table}
\centering
    \begin{tabular}{lll}
    \hline
    Function    & Primal subgradient  & Dual subgradient          \\ \hline\hline
    $g$ &  $\tnabla g(y^s) = B^\ast w_{d_g}^s$ & $\tnabla d_g(w_{d_g}^s) = By^s - b$ \\\hline
    $f$ & $\tnabla f(x^s) = A^\ast w_{d_f}^s$ &  $\tnabla d_f(w_{d_f}^s) = Ax^s$\\  \hline
    \end{tabular}
 \caption{The main subgradient identities used throughout Section~\ref{sec:DRSADMM}. The letter $s$ denotes a superscript (e.g. $s = k$ or $s = \ast$).  See \cite{davis2014convergence} for a proof.}\label{table:ADMMsubgradients}
\end{table}
\end{center}

\subsection{Converting dual inequalities to primal inequalities}\label{sec:convertinequalities}

The ADMM algorithm generates $5$ sequences of iterates:
\begin{align*}
(z^j)_{j \geq 0}, (w_{d_f}^j)_{j \geq 0}, \mbox{ and } (w_{d_g}^j)_{j \geq 0} \subseteq \cG && \mbox{and} &&  (x^j)_{j \geq 0} \subseteq \cH_1, (y^j)_{j \geq 0} \subseteq \cH_2.
\end{align*}
In this section we recall some inequalities, which were derived in \cite[Section 8.2]{davis2014convergence}, that relate these sequences to each other through the primal and dual objective functions.  
In the following propositions, $z^\ast$ will denote a fixed point of $\TPRS$. The point $w^\ast := \prox_{\gamma d_g}(z^\ast)$ is a minimizer of the dual problem in Equation~\eqref{eq:dualproblem2}.  Finally, we let $x^\ast$ and $y^\ast$ be defined as in Table~\ref{table:ADMMsubgradients}.


\begin{proposition}[ADMM primal upper fundamental inequality]\label{prop:ADMMupper}
For all $k \geq 0$, we have the bound
\begin{align*}
4\gamma \lambda_k (f(x^k) + g(y^k) &- f(x^\ast) - g(y^\ast))\\
&\leq \|z^k - (z^\ast - w^\ast)\|^2 - \|z^{k+1} - (z^\ast - w^\ast)\|^2  + \left(1- \frac{1}{\lambda_k} \right) \|z^{k} - z^{k+1}\|^2. \numberthis \label{prop:ADMMupper:eq:main}
\end{align*}
\end{proposition}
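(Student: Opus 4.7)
My plan is to view relaxed ADMM as relaxed PRS applied to the dual problem~(\ref{eq:dualproblem2}) and to obtain the primal estimate by combining Fenchel--Young duality with the \emph{lower} fundamental inequality applied to the dual (Proposition~\ref{prop:DRSlower}). The key observation is that Proposition~\ref{prop:DRSupper}, applied to the dual at the minimizer $w^\ast$, controls the dual gap
\begin{align*}
\Delta_k := d_f(w_{d_f}^k) + d_g(w_{d_g}^k) - d_f(w^\ast) - d_g(w^\ast)
\end{align*}
from \emph{above}, which is the wrong direction for bounding the primal gap; the correct ingredient is instead the \emph{lower} bound on $\Delta_k$ supplied by Proposition~\ref{prop:DRSlower}.

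First I rewrite the primal gap in dual terms. By Table~\ref{table:ADMMsubgradients} we have $A^\ast w_{d_f}^k \in \partial f(x^k)$ and $B^\ast w_{d_g}^k \in \partial g(y^k)$, so Fenchel--Young holds with equality at the iterates: $f(x^k) + d_f(w_{d_f}^k) = \dotp{w_{d_f}^k, Ax^k}$ and $g(y^k) + d_g(w_{d_g}^k) = \dotp{w_{d_g}^k, By^k - b}$. The same reasoning at $(x^\ast, y^\ast, w^\ast)$, together with $Ax^\ast + By^\ast = b$, yields $f(x^\ast) + g(y^\ast) + d_f(w^\ast) + d_g(w^\ast) = 0$. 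Subtracting the two produces the identity
\begin{align*}
f(x^k) + g(y^k) - f(x^\ast) - g(y^\ast) = \dotp{w_{d_f}^k, Ax^k} + \dotp{w_{d_g}^k, By^k - b} - \Delta_k.
\end{align*}

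Next I bound the two pieces on the right. Applying Proposition~\ref{prop:DRSlower} to the dual (with $w^\ast = \prox_{\gamma d_g}(z^\ast)$ playing the role of $x^\ast$) and dropping the nonnegative $S$-terms yields $\Delta_k \geq \gamma^{-1} \dotp{w_{d_g}^k - w_{d_f}^k, z^\ast - w^\ast}$; combined with $w_{d_f}^k - w_{d_g}^k = (z^{k+1} - z^k)/(2\lambda_k)$ from~(\ref{eq:DRSADMM}) this becomes $-4\gamma\lambda_k \Delta_k \leq 2\dotp{z^{k+1} - z^k, z^\ast - w^\ast}$. Separately, using the feasibility identity $\gamma(Ax^k + By^k - b) = -(z^{k+1} - z^k)/(2\lambda_k)$ from~(\ref{eq:ADMMfeasibilityFPR}) together with $z^k - w_{d_g}^k = \gamma(By^k - b)$ (the proximal step for $w_{d_g}^k$), a short rearrangement shows
\begin{align*}
4\gamma\lambda_k \bigl( \dotp{w_{d_f}^k, Ax^k} + \dotp{w_{d_g}^k, By^k - b} \bigr) = -2\dotp{z^{k+1} - z^k, z^k} - \lambda_k^{-1} \|z^{k+1} - z^k\|^2.
\end{align*}

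Combining the three ingredients (multiplying the Fenchel--Young identity by $4\gamma\lambda_k$ and inserting the two bounds) and grouping the inner products against $z^\ast - w^\ast$ yields
\begin{align*}
4\gamma\lambda_k \bigl( f(x^k) + g(y^k) - f(x^\ast) - g(y^\ast) \bigr) \leq -2\dotp{z^{k+1} - z^k, z^k - (z^\ast - w^\ast)} - \lambda_k^{-1} \|z^{k+1} - z^k\|^2.
\end{align*}
A single application of the polar identity with $c := z^\ast - w^\ast$, namely $-2\dotp{z^{k+1} - z^k, z^k - c} = \|z^k - c\|^2 - \|z^{k+1} - c\|^2 + \|z^{k+1} - z^k\|^2$, then produces the claimed bound. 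The main obstacle, in my view, is step three: the specific shift $z^\ast - w^\ast$ in the telescoping norms is not obvious a priori, and it only emerges once one recognizes that the correct control on $\Delta_k$ must come from the \emph{lower} fundamental inequality, whose inner-product term pairs with the fixed-point residual identity to generate precisely that shift.
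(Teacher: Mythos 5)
Your proof is correct, and it follows essentially the same route as the derivation the paper cites for this inequality (Section 8.2 of \cite{davis2014convergence}): convert the primal gap to dual quantities via Fenchel--Young equality at the iterates, control the dual gap from below with the dual instance of Proposition~\ref{prop:DRSlower}, and use the proximal and fixed-point-residual identities together with the cosine rule to produce the telescoping norms centered at $z^\ast - w^\ast$. All the intermediate identities you state check out, so there is nothing to correct.
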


\begin{proposition}[ADMM primal lower fundamental inequality]\label{prop:ADMMlower}
For all $x \in \dom(f)$ and $y \in \dom(g)$, we have the bound:
\begin{align}\label{prop:ADMMlower:eq:main}
f(x) + g(y) - f(x^\ast) - g(y^\ast) &\geq \dotp{Ax + By - b, w^\ast}.
\end{align}
\end{proposition}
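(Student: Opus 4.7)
The plan is to prove this as a direct consequence of convexity together with the KKT-style optimality conditions that $w^\ast$ and $(x^\ast, y^\ast)$ satisfy at the fixed point $z^\ast$. The key observation is that the subgradient identities in Table~\ref{table:ADMMsubgradients} give us $A^\ast w^\ast \in \partial f(x^\ast)$ and $B^\ast w^\ast \in \partial g(y^\ast)$, and that fixed-point-ness of $z^\ast$ together with Equation~\eqref{eq:ADMMfeasibilityFPR} yields primal feasibility $Ax^\ast + By^\ast = b$.

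First I would record these two ingredients. The subgradient identities come from Table~\ref{table:ADMMsubgradients} specialized to $s = \ast$: the entries read $\tnabla f(x^\ast) = A^\ast w_{d_f}^\ast$ and $\tnabla g(y^\ast) = B^\ast w_{d_g}^\ast$, and since $z^\ast$ is a fixed point of $\TPRS$ we have $w_{d_f}^\ast = w_{d_g}^\ast = w^\ast$. Primal feasibility then follows from Equation~\eqref{eq:ADMMfeasibilityFPR}: at a fixed point $z^{k+1} = z^k$, so $Ax^\ast + By^\ast - b = 0$.

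Next, I would invoke the standard subgradient (convexity) inequality for each function at the respective optimal point:
\begin{align*}
f(x) &\geq f(x^\ast) + \dotp{A^\ast w^\ast,\, x - x^\ast} = f(x^\ast) + \dotp{w^\ast,\, Ax - Ax^\ast},\\
g(y) &\geq g(y^\ast) + \dotp{B^\ast w^\ast,\, y - y^\ast} = g(y^\ast) + \dotp{w^\ast,\, By - By^\ast}.
\end{align*}
Adding these two bounds and using primal feasibility $Ax^\ast + By^\ast = b$ to collapse the $x^\ast, y^\ast$ terms yields precisely \eqref{prop:ADMMlower:eq:main}.

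There is no substantive obstacle here; the only thing to be careful about is to justify the two ``ingredient'' steps (the subgradient identities at $\ast$ and the primal feasibility) by pointing to the appropriate entries in Table~\ref{table:ADMMsubgradients} and to Equation~\eqref{eq:ADMMfeasibilityFPR}, both of which are already established in the excerpt. The whole argument is a few lines and requires no use of strong convexity or Lipschitz constants, which is consistent with the fact that the statement contains no $S_f, S_g$ terms.
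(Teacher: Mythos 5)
Your argument is correct and is essentially the argument the paper relies on: the paper itself defers this proof to \cite[Section 8.2]{davis2014convergence}, where the inequality is obtained in exactly this way, namely from the subgradient inequalities at $(x^\ast,y^\ast)$ with subgradients $A^\ast w^\ast\in\partial f(x^\ast)$, $B^\ast w^\ast\in\partial g(y^\ast)$ (Table~\ref{table:ADMMsubgradients} with $w_{d_f}^\ast=w_{d_g}^\ast=w^\ast$ at a fixed point) together with primal feasibility $Ax^\ast+By^\ast=b$. Both of your ``ingredient'' steps are correctly justified from the fixed-point property, so there is no gap.
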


\subsection{Converting dual convergence rates to primal convergence rates}\label{sec:dualtoprimalrates}

In this section, we use the inequalities deduced in Section~\ref{sec:convertinequalities} and the convergence rates proved in previous sections to derive convergence rates for the primal objective error and strong convergence of various quantities that appear in ADMM.  In addition, we translate the results of the previous sections and use Proposition~\ref{prop:stronglymonotone} to state all theorems in terms of purely primal quantities.

We recall the definition of the two auxiliary terms (Equation~\eqref{eq:snotation}):
\begin{align}\label{eq:snotationdual}
S_{d_f}(w_{d_f}^k, w^\ast) &=  \max\left\{\frac{\beta_f\alpha_A}{2}\|w_{d_f}^k - w^\ast\|_{}^2, \frac{\mu_f}{2\|A\|^2}\|Ax^k - Ax^\ast\|_{}^2\right\}, \\
S_{d_g}(w_{d_g}^k, w^\ast) &=  \max\left\{\frac{\beta_g\alpha_B}{2}\|w_{d_g}^k - w^\ast\|_{}^2, \frac{\mu_g}{2\|B\|^2}\|By^k - By^\ast\|_{}^2\right\}.
\end{align}
This form readily follows from Table~\ref{table:ADMMsubgradients}.

The following is a direct translation of Theorem~\ref{prop:sumauxilliaryterms} to the current setting.  Note that any of the Lipschitz, strong convexity, and strong monotonicity constants may be zero.
\begin{theorem}[Primal differentiability and strong convexity]\label{prop:primaldifferentiability}
Suppose that $(z^j)_{j \geq 0}$ is generated by Algorithm~\ref{alg:ADMM}.  Then
\begin{enumerate}
\item \label{prop:sumauxilliaryterms:part:1} \textbf{Best iterate convergence:} If $(\lambda_j)_{j \geq 0}$ is bounded away from zero, then
$\min_{i=0, \cdots, k}\left\{S_{d_f}(w_{d_f}^{i}, w^\ast)\right\} = o\left(1/(k+1)\right)$ and $\min_{i=0, \cdots, k}\{S_{d_g}(w_{d_g}^{i}, w^\ast)\}= o\left(1/(k+1)\right).$
\item \label{prop:sumauxilliaryterms:part:2} \textbf{Ergodic convergence:} Let $\overline{w}_{d_f}^k = (1/\Lambda_k) \sum_{i=0}^k w_{d_f}^i$, let $\overline{w}_{d_g}^k = (1/\Lambda_k) \sum_{i=0}^k\lambda_i w_{d_g}^i$, let $\overline{x}^k = (1/\Lambda_k) \sum_{i=0}^k x^i$, and let $\overline{y}^k = (1/\Lambda_k) \sum_{i=0}^k\lambda_i y^i$. Then
\begin{align*}
\max\left\{ \beta_f\alpha_A\left\|\overline{w}_{d_f}^k - w^\ast\right\|^2, \frac{\mu_f}{\|A\|^2}\left\|A\overline{x}^k - Ax^\ast\right\|^2\right\} &+ \max\left\{\beta_g\alpha_B\left\|\overline{w}_{d_g}^k - w^\ast\right\|^2 + \frac{\mu_g}{\|B\|^2}\left\|B\overline{y}^k- By^\ast\right\|^2\right\} \\
 &\leq \frac{\|z^0 - z^\ast\|^2}{4 \gamma \Lambda_k}.
\end{align*}
\item  \label{prop:sumauxilliaryterms:part:3} \textbf{General convergence:} If $\underline{\tau} = \inf_{j \geq 0} \lambda_j(1-\lambda_j) > 0$, then
$S_{f}(w_{d_f}^{k}, w^\ast) + S_g(w_{d_g}^{k}, w^\ast) = o({1}/{\sqrt{k+1}})$.
\end{enumerate}
\end{theorem}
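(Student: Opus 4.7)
The plan is to obtain this theorem as a direct translation of Theorem~\ref{prop:sumauxilliaryterms} via the dual-formulation equivalence in Proposition~\ref{prop:relaxedADMM}. By that proposition, the sequence $(z^j)_{j\geq 0}$ generated by Algorithm~\ref{alg:ADMM} coincides with the sequence generated by Algorithm~\ref{alg:DRS} applied to the dual problem $\min_{w\in\cG}\; d_f(w) + d_g(w)$, with the auxiliary points $w_{d_g}^k = \prox_{\gamma d_g}(z^k)$ and $w_{d_f}^k = \prox_{\gamma d_f}(\refl_{\gamma d_g}(z^k))$ playing the roles of $x_g^k$ and $x_f^k$, and with $w^\ast = \prox_{\gamma d_g}(z^\ast)$ playing the role of $x^\ast$.

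The next step is to identify the regularity constants of the dual via Proposition~\ref{prop:stronglymonotone}: we obtain $\mu_{d_f} = \beta_f\alpha_A$ and $\beta_{d_f} = \mu_f/\|A\|^2$, together with the analogous identities for $d_g$. Combining this with the subgradient table in Table~\ref{table:ADMMsubgradients}, which gives $\tnabla d_f(w_{d_f}^k) = Ax^k$, $\tnabla d_f(w^\ast) = Ax^\ast$, $\tnabla d_g(w_{d_g}^k) = By^k - b$, and $\tnabla d_g(w^\ast) = By^\ast - b$, the definition of $S_{d_f}, S_{d_g}$ from Equation~\eqref{eq:snotation} unwinds to precisely the expressions in Equation~\eqref{eq:snotationdual}, namely
\begin{align*}
S_{d_f}(w_{d_f}^k, w^\ast) &= \max\!\left\{\tfrac{\beta_f\alpha_A}{2}\|w_{d_f}^k - w^\ast\|^2,\; \tfrac{\mu_f}{2\|A\|^2}\|Ax^k - Ax^\ast\|^2\right\},
\end{align*}
and symmetrically for $S_{d_g}$.

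With these identifications, each of the three claims is then an immediate consequence of the corresponding part of Theorem~\ref{prop:sumauxilliaryterms} applied to the dual iteration. Part~1 is a direct invocation of the ``best iterate'' conclusion under $\underline{\lambda}>0$; Part~3 is a direct invocation of the nonergodic $o(1/\sqrt{k+1})$ conclusion under $\underline{\tau}>0$. For Part~2, the ergodic statement, one applies Jensen's inequality to $\|\cdot\|^2$ together with the linearity of $A$ and $B$ to bring the averaging inside: $A\overline{x}^k = (1/\Lambda_k)\sum_{i=0}^k \lambda_i A x^i$ and $B\overline{y}^k = (1/\Lambda_k)\sum_{i=0}^k \lambda_i B y^i$. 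The denominator $4\gamma\Lambda_k$ in the statement (versus $8\gamma\Lambda_k$ in the primal theorem) simply reflects that the displayed bound multiplies $\overline{S}_{d_f} + \overline{S}_{d_g}$ by $2$ by dropping the $\tfrac{1}{2}$ from the maxes.

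The proof is therefore almost entirely bookkeeping; there is no genuine obstacle. The only point requiring care is verifying that the dual subgradient differences translate exactly into the primal quantities $\|Ax^k - Ax^\ast\|$ and $\|By^k - By^\ast\|$ that appear in the statement, but this is immediate from Table~\ref{table:ADMMsubgradients}. Everything else is a mechanical substitution into Theorem~\ref{prop:sumauxilliaryterms}.
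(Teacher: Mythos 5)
Your proposal is correct and matches the paper's approach exactly: the paper offers no separate argument, simply declaring the theorem ``a direct translation of Theorem~\ref{prop:sumauxilliaryterms}'' via the dual equivalence of Proposition~\ref{prop:relaxedADMM}, the constant identifications of Proposition~\ref{prop:stronglymonotone}, and the subgradient identities of Table~\ref{table:ADMMsubgradients}. Your accounting for the factor-of-two change in the ergodic denominator is also the right bookkeeping.
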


The following proposition deduces $o(1/(k+1))$ objective error convergence of standard ADMM whenever $g$ is strongly convex, and $\gamma$ is small enough.
\begin{theorem}[Strong convexity of $g$]\label{thm:strongconvexadmm}
Suppose that $g$ is $\mu_g$-strongly convex. Let $\lambda_k \equiv 1/2$, and let $\gamma < \kappa \beta =  \kappa \mu_g/\|B\|^2$ (see Theorem~\ref{thm:differentiableobjective}). Then for all $k \geq 1$, we have the constraint violations convergence rate:
\begin{align*}
\|Ax^k + By^k - b\|^2 \leq \frac{\beta^2\|w_{d_g}^0 - w^\ast\|^2}{\gamma^2k^2\left(1+\gamma/\beta\right)^2\left(\beta^2 -\gamma^2/\kappa^2\right)} && \mathrm{and} && \|Ax^k + By^k - b\|^2 = o\left(\frac{1}{k^2}\right).
\end{align*}
Moreover, the primal objective errors satisfy
\begin{align*}
 \frac{-\beta\|w^\ast\|\|w_{d_g}^0 - w^\ast\|}{\gamma k\left(1+{\gamma}/{\beta}\right)\sqrt{\left(\beta^2 -{\gamma^2}/{\kappa^2}\right)}} \leq f(x^k) + g(y^k) - f(x^\ast) - g(y^\ast) &\leq\frac{\beta\|w_{d_g}^0 - w^\ast\| \left(\|z^0 - z^\ast\| + \|w^\ast\|\right)}{\gamma k \left(1+{\gamma}/{\beta}\right)\sqrt{\left(\beta^2 -{\gamma^2}/{\kappa^2}\right)}},
\end{align*}
and $|f(x^k) + g(y^k) - f(x^\ast) - g(y^\ast)| = o\left(1/k\right).$
\end{theorem}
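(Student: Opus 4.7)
The plan is to reduce the entire theorem to Theorems~\ref{thm:differentiableobjective} and~\ref{thm:differentiableFPR} applied to the dual problem~\eqref{eq:dualproblem2}. Since $g$ is $\mu_g$-strongly convex, Proposition~\ref{prop:stronglymonotone} tells us that $d_g$ is differentiable with $\nabla d_g$ being $(\|B\|^2/\mu_g)$-Lipschitz. Therefore, applying relaxed PRS with $\lambda_k\equiv 1/2$ to~\eqref{eq:dualproblem2} is exactly the setting of Section~\ref{sec:constantrelaxationandbetterates} with $\beta=\mu_g/\|B\|^2$, and the hypothesis $\gamma<\kappa\beta$ makes both theorems of that section applicable. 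This will immediately give the FPR rate $\|z^{k+1}-z^k\|^2\le \beta^2\|w_{d_g}^0-w^\ast\|^2 / [k^2(1+\gamma/\beta)^2(\beta^2-\gamma^2/\kappa^2)]$ together with the $o(1/k^2)$ refinement.

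For the constraint violations, I will simply use identity~\eqref{eq:ADMMfeasibilityFPR}, which with $\lambda_k=1/2$ becomes $z^{k+1}-z^k=-\gamma(Ax^k+By^k-b)$, so $\|Ax^k+By^k-b\|^2=\gamma^{-2}\|z^{k+1}-z^k\|^2$. Plugging in Theorem~\ref{thm:differentiableFPR} yields the stated $O(1/k^2)$ and $o(1/k^2)$ bounds.

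For the objective error upper bound, I would start from the ADMM primal upper inequality~\eqref{prop:ADMMupper:eq:main} with $\lambda_k=1/2$, so the coefficient $(1-1/\lambda_k)=-1$ makes the FPR term nonpositive. Using the standard identity $\|a\|^2-\|b\|^2=2\langle a-b,a\rangle-\|a-b\|^2$ with $a=z^k-(z^\ast-w^\ast)$ and $b=z^{k+1}-(z^\ast-w^\ast)$ gives
\begin{align*}
2\gamma\bigl(f(x^k)+g(y^k)-f(x^\ast)-g(y^\ast)\bigr)\le 2\langle z^k-z^{k+1},\, z^k-(z^\ast-w^\ast)\rangle - 2\|z^k-z^{k+1}\|^2,
\end{align*}
from which Cauchy--Schwarz together with the triangle inequality $\|z^k-(z^\ast-w^\ast)\|\le\|z^k-z^\ast\|+\|w^\ast\|$ and the Fej\'er-type monotonicity $\|z^k-z^\ast\|\le\|z^0-z^\ast\|$ from Fact~\ref{fact:averagedconvergence} yield
\begin{align*}
f(x^k)+g(y^k)-f(x^\ast)-g(y^\ast)\le \frac{\|z^k-z^{k+1}\|\bigl(\|z^0-z^\ast\|+\|w^\ast\|\bigr)}{\gamma}.
\end{align*}
Inserting the FPR bound of Theorem~\ref{thm:differentiableFPR} produces exactly the advertised upper estimate. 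The lower estimate follows from the primal lower fundamental inequality of Proposition~\ref{prop:ADMMlower}, namely $f(x^k)+g(y^k)-f(x^\ast)-g(y^\ast)\ge \langle Ax^k+By^k-b,\,w^\ast\rangle\ge -\|Ax^k+By^k-b\|\,\|w^\ast\|$, combined with the constraint-violation bound already established.

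The $o(1/k)$ refinement for both signs drops out at once, because both the upper and lower bounds are controlled by $\|z^k-z^{k+1}\|$, which is $o(1/k)$ by Theorem~\ref{thm:differentiableFPR}. There is no real obstacle here; the only step requiring a little care is the upper bound, where one must (i) exploit the negative $-\|z^k-z^{k+1}\|^2$ term given by $\lambda_k=1/2$ rather than trying to telescope, and (ii) recognize that $z^\ast-w^\ast$ is not a fixed point of $\TPRS$, so one passes to $z^\ast$ via the triangle inequality before invoking Fej\'er monotonicity.
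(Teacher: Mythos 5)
Your proposal is correct and follows essentially the same route as the paper: translate to the dual via Proposition~\ref{prop:stronglymonotone} with $\beta=\mu_g/\|B\|^2$, get the constraint violation rate from identity~\eqref{eq:ADMMfeasibilityFPR} plus the FPR bound of Theorem~\ref{thm:differentiableFPR}, obtain the lower objective bound from Proposition~\ref{prop:ADMMlower}, and obtain the upper bound from Proposition~\ref{prop:ADMMupper} together with Cauchy--Schwarz, the triangle inequality $\|z^k-(z^\ast-w^\ast)\|\le\|z^k-z^\ast\|+\|w^\ast\|$, and Fej\'er monotonicity. The only cosmetic difference is that you expand $\|a\|^2-\|b\|^2$ by polarization where the paper invokes the cosine rule~\eqref{eq:cosinerule}; the resulting inner-product bound is the same.
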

\begin{proof}
The constraint violations rate follows from the identity $z^{k+1} - z^k = -\gamma ( Ax^{k} + By^k - b)$ (Equation~\eqref{eq:ADMMfeasibilityFPR}) and the FPR convergence rate in Theorem~\ref{thm:differentiableFPR}.

The lower bound follows from the lower fundamental inequality in Proposition~\ref{prop:ADMMlower} and the FPR convergence rate in Theorem~\ref{thm:differentiableFPR}:
\begin{align*}
f(x^k) + g(y^k) - f(x^\ast) - g(y^\ast) \stackrel{\eqref{prop:ADMMlower:eq:main}}{\geq} \dotp{Ax^k + By^k - b, w^\ast} &\stackrel{\eqref{eq:ADMMfeasibilityFPR}}{\geq} -\frac{1}{\gamma}\|z^k - z^{k+1}\|\| w^\ast\| \\
&\stackrel{\eqref{eq:differentiableFPR}}{\geq} - \frac{-\beta\|w^\ast\|\|w_{d_g}^0 - w^\ast\|}{\gamma k\left(1+{\gamma}/{\beta}\right)\sqrt{\left(\beta^2 -{\gamma^2}/{\kappa^2}\right)}}.
\end{align*}

Part~\ref{fact:averagedconvergence:eq:mono} of Fact~\ref{fact:averagedconvergence} bounds the norm: $\|z^{k+1} - (z^\ast - w^\ast)\| \leq \|z^{k+1} - z^\ast\| + \|w^\ast\| \leq \|z^0 - z^\ast\| + \|w^\ast\|$. Therefore, the upper bound follows from the upper fundamental inequality in Proposition~\ref{prop:ADMMupper} and the FPR convergence rate in Theorem~\ref{thm:differentiableFPR}:
\begin{align*}
 f(x^k) + g(y^k) - f(x^\ast) - g(y^\ast)
&\stackrel{\eqref{prop:ADMMupper:eq:main}}{\leq} \frac{1}{2\gamma}\left(\|z^k - (z^\ast - w^\ast)\|^2 - \|z^{k+1} - (z^\ast - w^\ast)\|^2  - \|z^{k} - z^{k+1}\|^2\right) \\\
&\stackrel{\eqref{eq:cosinerule}}{\leq} \frac{1}{\gamma}\dotp{ z^{k+1} - (z^\ast - w^\ast), z^k - z^{k+1}} \stackrel{\eqref{eq:differentiableFPR}}{\leq} \frac{\beta\|w_{d_g}^0 - w^\ast\| \left(\|z^0 - z^\ast\| + \|w^\ast\|\right)}{\gamma k \left(1+{\gamma}/{\beta}\right)\sqrt{\left(\beta^2 -{\gamma^2}/{\kappa^2}\right)}}.
\end{align*}
The little $o$-rate follows because, as the above equations have shown, the objective error is upper and lower bounded by a multiple of the square root of the FPR, which has convergence rate $o(1/k)$ by Theorem~\ref{thm:differentiableFPR}.
\qed\end{proof}

It would be nice to prove a convergence rate for the ``best iterate" of the sequence of primal objective errors in the style of Theorem~\ref{thm:lipschitzbest}.  Unfortunately the fundamental inequalities we developed in Section~\ref{sec:convertinequalities} do not immediately imply such a rate.

Now we shift our focus to linear convergence. The following proposition is a direct translation of the main results of Section~\ref{sec:linearconvergence} to the current setting. The interested reader is encouraged to read Appendix~\ref{app:linearconvergenceimplies} to see how the following rates imply convergence rates for the primal and dual objective, and feasibility errors.
\begin{theorem}[Linear convergence of Relaxed ADMM]\label{thm:admmlinearconvergence}
The following are true:
\begin{enumerate}
\item \label{thm:admmlinearconvergence:part:gregular} If $\mu_g \beta_g \alpha_B > 0$, then $(z^j)_{j \geq 0}$ converges linearly and
\begin{align*}
\|z^{k+1} - z^\ast\|^2 \leq \left(1 -\frac {4\gamma\lambda_k\beta_g\alpha_B}{(1 + {\gamma\|B\|^2}/{\mu_g})^2}\right)^{{1}/{2}}\|z^k - z^\ast\|^2.
\end{align*}
\item \label{thm:admmlinearconvergence:part:fregular} If $\mu_f\beta_f \alpha_A > 0$, then $(z^j)_{j \geq 0}$ converges linearly and
\begin{align*}
\|z^{k+1} - z^\ast\|^2 \leq \left(1 -   \frac{\lambda_k\min\left\{ {4\gamma  \beta_f \alpha_A}/{\left(1+{\|A\|^2}/{\mu_f}\right)^2}, (1 - \lambda_k)\right\}}{2}\right)^{{1}/{2}}\|z^k - z^\ast\|^2.
\end{align*}
\item \label{thm:admmlinearconvergence:part:fgregular}  If $\mu_f \beta_g \alpha_B > 0$, then $(z^j)_{j \geq 0}$ converges linearly and
\begin{align*}
\|z^{k+1} - z^\ast\|^2 \leq \left(1 -   \frac{4\lambda_k\min\{ \gamma  \beta_g\alpha_B, { \mu_f}/({\|A\|^2\gamma}), (1 - \lambda_k)\}}{3}\right)^{{1}/{2}}\|z^k - z^\ast\|^2.
\end{align*}
\item\label{thm:admmlinearconvergence:part:gfregular}  If $\mu_g \beta_f \alpha_A > 0$, then $(z^j)_{j \geq 0}$ converges linearly and
\begin{align*}
\|z^{k+1} - z^\ast\|^2 \leq \left(1 -   \frac{4\lambda_k\min\{ \gamma  \beta_f\alpha_A, { \mu_g}/({\|B\|^2\gamma}), (1 - \lambda_k)\}}{3}\right)^{{1}/{2}}\|z^k - z^\ast\|^2.
\end{align*}
\end{enumerate}
\end{theorem}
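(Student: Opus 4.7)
The plan is to exploit the fact, recorded in Proposition~\ref{prop:relaxedADMM} and the surrounding discussion, that relaxed ADMM applied to Problem~\eqref{eq:simplelinearconstrained} is nothing but relaxed PRS applied to the dual splitting~\eqref{eq:dualproblem2}. Once this identification is made, the four parts of the theorem become direct translations of the linear convergence results from Section~\ref{sec:linearconvergence}, with the primal regularity constants turned into dual constants via Proposition~\ref{prop:stronglymonotone}. Specifically, I would use the standing dictionary
\begin{align*}
\mu_{d_f} = \beta_f\alpha_A,\qquad \mu_{d_g} = \beta_g\alpha_B,\qquad \beta_{d_f} = \frac{\mu_f}{\|A\|^2},\qquad \beta_{d_g} = \frac{\mu_g}{\|B\|^2},
\end{align*}
so that a product of two primal constants being strictly positive corresponds exactly to one of $d_f$ or $d_g$ carrying a given regularity property.

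For Part~\ref{thm:admmlinearconvergence:part:gregular}, the hypothesis $\mu_g\beta_g\alpha_B>0$ makes $d_g$ simultaneously $\mu_{d_g}$-strongly convex and $(1/\beta_{d_g})$-Lipschitz differentiable, i.e.\ $d_g$ carries both regularity properties. I would invoke Theorem~\ref{thm:nonergodicstronglipschitzderivative} on the dual iteration and substitute the dictionary above to obtain the claimed contraction factor. For Part~\ref{thm:admmlinearconvergence:part:fregular}, the hypothesis $\mu_f\beta_f\alpha_A>0$ instead puts both properties on $d_f$, so Theorem~\ref{prop:regf} applies with $(\mu,\beta)=(\mu_{d_f},\beta_{d_f})$, and again the stated rate is obtained by substitution.

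For Parts~\ref{thm:admmlinearconvergence:part:fgregular} and~\ref{thm:admmlinearconvergence:part:gfregular}, the regularity is split across $d_f$ and $d_g$. Under $\mu_f\beta_g\alpha_B>0$, strong convexity of $f$ yields Lipschitz differentiability of $d_f$ with constant $1/\beta_{d_f}$, while Lipschitz differentiability of $g$ combined with strong monotonicity of $BB^*$ yields strong convexity of $d_g$ with constant $\mu_{d_g}$; this is exactly the ``complementary'' configuration to which Theorem~\ref{thm:sharedreg} applies. Substitution yields the stated rate. Part~\ref{thm:admmlinearconvergence:part:gfregular} is entirely symmetric, with the roles of $f$ and $g$ (and hence of $d_f$ and $d_g$) swapped.

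There is no real obstacle beyond careful bookkeeping: each of the four cases is a mechanical substitution once the right primal--dual translation is identified. The only point that requires a little vigilance is making sure the ``respectively'' branches of Theorems~\ref{thm:nonergodicstronglipschitzderivative}, \ref{prop:regf}, and~\ref{thm:sharedreg} are matched to the correct dual function, and that the expressions $\gamma/\beta_{d_\bullet}$ appearing in the contraction constants are rewritten in their purely primal form $\gamma\|A\|^2/\mu_f$ or $\gamma\|B\|^2/\mu_g$ as appropriate. One may also verify that linear convergence of $(z^j)_{j\geq 0}$, combined with Proposition~\ref{prop:linearconvergenceimplies} transferred to the dual setting, yields the advertised linear convergence of the feasibility residual, solution errors, and objective error.
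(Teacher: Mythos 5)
Your proposal is correct and follows exactly the route the paper intends: the paper itself offers no separate argument for this theorem, stating only that it is ``a direct translation of the main results of Section~\ref{sec:linearconvergence}'' to the dual problem~\eqref{eq:dualproblem2}, which is precisely your reduction via Proposition~\ref{prop:stronglymonotone} and the dictionary $\mu_{d_f}=\beta_f\alpha_A$, $\mu_{d_g}=\beta_g\alpha_B$, $\beta_{d_f}=\mu_f/\|A\|^2$, $\beta_{d_g}=\mu_g/\|B\|^2$, with Theorems~\ref{thm:nonergodicstronglipschitzderivative}, \ref{prop:regf}, and~\ref{thm:sharedreg} matched to the four cases as you describe. (Your remark that the denominator in Part~\ref{thm:admmlinearconvergence:part:fregular} should read $1+\gamma\|A\|^2/\mu_f$ is in fact the correct substitution; the missing $\gamma$ in the theorem statement appears to be a typo.)
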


We can apply Proposition~\ref{prop:linearconvergenceimpliesADMM} to any of the scenarios that appear in Theorem~\ref{thm:admmlinearconvergence} and deduce the rate of linear convergence of the objective error and constraint violations.  We leave this application to the reader.

Linear convergence of ADMM has been deduced in a variety of scenarios. In \cite{denglinear2012}, the authors prove the linear convergence (in finite dimensions) of a generalized form of ADMM, which allows the possibility of adding proximal terms to the alternating minimization steps that appear in Algorithm~\ref{alg:ADMM}. The four scenarios that appear in \cite[Table 1.1]{denglinear2012}) have some overlap with our results. In the standard version of ADMM, (with no relaxation or extra proximal terms), scenarios 1 and 2 in~\cite[Table 1.1]{denglinear2012} are the finite-dimensional analogues of Part~\ref{thm:admmlinearconvergence:part:gregular} of Theorem~\ref{sec:linearconvergence}. Scenarios 3 and 4 in~\cite[Table 1.1]{denglinear2012} are not covered by our analysis because they require that we treat the structure of $A$ and $B$ more carefully than we have in this section.  In addition,  Parts~\ref{thm:admmlinearconvergence:part:fregular},~\ref{thm:admmlinearconvergence:part:fgregular}, and~\ref{thm:admmlinearconvergence:part:gfregular} of Theorem~\ref{thm:admmlinearconvergence} are not discussed in \cite{denglinear2012}. Finally, we note that this paper and \cite{denglinear2012} use the opposite update orders in ADMM. They generally lead to different sequences except when at least one of $f$ and $g$ is quadratic \cite{yanyin2014}. Therefore, when comparing the results between the two papers, one must switch $f$ and $g$, as well as $A$ and $B$.

\section{Examples}

In this section, we apply DRS and ADMM to concrete problems and explicitly bound the associated objective errors and FPR with the convergence rates that we derived in the previous sections.

\subsection{Feasibility problems}\label{sec:feasibility}

Suppose that $C_f$ and $C_g$ are closed convex subsets of $\cH$ with nonempty intersection.  The goal of the feasibility problem is the find a point in the intersection of $C_f$ and $C_g$. In this section, we present a comparison between MAP and the relaxed PRS algorithm.

\subsubsection{Linear convergence}
Section~\ref{sec:feasibilityniceintersection} shows that relaxed PRS applied to $f = d_{C_f}^2$ and $g = d_{C_g}^2$ converges linearly whenever $C_f$ and $C_g$ have a sufficiently nice intersection.  In addition, \cite{bauschke2014linear} and \cite{phan2014linear} have recently shown that one can achieve linear convergence under the same regularity assumptions on $C_f \cap C_g$ when $f= \iota_{C_f}$ and $g = \iota_{C_f}$. We refer to \cite[Fact 5.8]{bauschke2014linear} for an extensive list of conditions that guarantee (bounded) linear regularity of $\{C_1, C_2\}$. For the readers convenience, we list a few important examples:
\begin{enumerate}
\item {\bf Subspaces:} If $C_f^\perp + C_g^\perp$ is closed, then $\{C_f, C_g\}$ is linearly regular.
\item {\bf Polyhedron:} If $C_f \cap C_g \neq \emptyset$, then $\{C_f, C_g\}$ is linearly regular.
\item {\bf Standard constraint qualification:} If the relative interiors of $C_f$ and $C_g$ intersect, then $\{C_f,C_g\}$ is boundedly linearly regular.
\end{enumerate}

\subsubsection{General convergence}\label{sec:feasibilitygeneralconvergence}

In general, we cannot expect linear convergence of relaxed PRS algorithm for the feasibility problem.  Indeed, \cite[Theorem 9]{davis2014convergence} constructs a DRS iteration that converges in norm but does so \emph{arbitrarily slowly}.  A similar result holds for MAP~\cite{bauschke2009characterizing}.  Thus, in  \cite{davis2014convergence} the authors focused on other measures of convergence, namely \emph{FPR} and \emph{objective error} rate.  The following discussion will utilize the results of \cite{davis2014convergence} to compare the relaxed PRS and MAP algorithms in the absence of regularity.

Let $\iota_{C_f}$ and $\iota_{C_g}$ be the indicator functions of $C_f$ and $C_g$. Then $x \in C_f \cap C_g$, if, and only if, $\iota_{C_f}(x) + \iota_{C_g}(x) = 0$, and the sum is infinite otherwise.  Thus, a point is in the intersection of $C_f$ and $C_g$ if, and only if, it is the minimizer of the following problem:
\begin{align}\label{sec:feasibility:eq:chiminimize}
\Min_{x \in \cH} \iota_{C_f}(x) + \iota_{C_g}(x).
\end{align}
The relaxed PRS algorithm applied to $f = \iota_{C_f}$ and $g = \iota_{C_g}$ has the following form: given an initial point $z^0 \in \cH$, for all $k \geq 0$, define
\begin{align}\label{sec:feasibility:eq:DRSchi}
\begin{cases}
x_g^k &= P_{C_g}(z^k); \\
x_f^k &= P_{C_f}(2x_g^k - z^k); \\
z^{k+1} &= z^k + 2\lambda_k(x_f^k - x_g^k).
\end{cases}
\end{align}

In general, the functions $f$ and $g$ are neither differentiable nor strongly convex. Furthermore, they only take on the values $0$ and $\infty$. Thus, we will only discuss FPR convergence rates of relaxed PRS. The FPR identity $x_{f}^k - x_g^k = \frac{1}{2\lambda_k}(z^{k+1} - z^k)$ shows that after $k$ iterations
\begin{align}\label{eq:feasibilitybounddistancenonergodic}
\max\{ d^2_{C_g}(x_f^k), d^2_{C_f}(x_g^k)\} \leq \|x_f^k - x_g^k\|^2 &\stackrel{\eqref{cor:DRSaveragedconvergence:eq:main}}{=} o\left(\frac{1}{k+1}\right).
\end{align}
By the convexity of $C_f$ and $C_g$, the ergodic iterates of relaxed PRS satisfy $\overline{x}_f^k = ({1}/{\Lambda_k})\sum_{i=0}^k \lambda_i x_f^i \in C_f$ and $\overline{x}_g^k = ({1}/{\Lambda_k})\sum_{i=0}^k \lambda_i x_g^i \in C_g$. Thus, \cite[Theorem 6]{davis2014convergence} implies the improved bound
\begin{align}\label{eq:drsergodicdistancebound}
\max\{ d^2_{C_g}(\overline{x}_f^k), d^2_{C_f}(\overline{x}_g^k)\} \leq \|\overline{x}_f^k - \overline{x}_g^k\|^2 &= O\left(\frac{1}{\Lambda_k^2}\right),
\end{align}
which is optimal by \cite[Proposition 7]{davis2014convergence}.  Therefore, after $k$ iterations the relaxed PRS algorithm produces a point in each set with distance of order at most $O({1}/{\Lambda_k})$ from each other.

We now shift our focus to the MAP algorithm. First we replace both of the indicator functions with the squared distance functions: $f= \min_{y \in C_f} \|x - y\|^2$ and $g(x) = \min_{y \in C_g} \|x-y\|^2.$ Now recall that $f$ and $g$ are differentiable, the gradient $\nabla g$ is $2$-Lipschitz continuous \cite[Corollary 12.30]{bauschke2011convex}, and relaxed PRS takes the form in Equation~\eqref{eq:DRSfeasibilitygamma}. Specializing to $\gamma = {1}/{2}$ and $\lambda_k \equiv 1$ yields the MAP algorithm (Corollary~\ref{cor:AP}).

In this algorithm, the main MAP sequence satisfies $(z^j)_{j \geq 1} \subseteq C_f$, while the auxiliary sequences $(x_f^j)_{j \geq 0}$ and $(x_g^j)_{j \geq 0}$ are not necessarily elements $C_f$ or $C_g$.  Therefore, the MAP FPR rate is less useful for estimating distances of the current iterates to $C_f$ and $C_g$ than it is in the relaxed PRS algorithm (See Equation~\eqref{eq:feasibilitybounddistancenonergodic}). Although $\lambda_k \equiv 1$, the map $P_{C_f} P_{C_g}$ is $\alpha$-averaged for some $\alpha < 1$, and, hence, we can still estimate $\|z^{k+1} - z^k\|^2 = o({1}/{(k+1)})$  (Corollary~\ref{cor:AP2}).

The ergodic convergence rate in \cite[Theorem 6]{davis2014convergence} (where we use the identity $d_{C_g}(x_g^k) = ({1}/{2})d_{C_g}(z^k)$ and Jensen's inequality) shows that
\begin{align}
d^2_{C_g}\left(\frac{1}{k+1}\sum_{i=0}^k z^i\right) \leq \frac{2}{k+1}\sum_{i=0}^k d_{C_g}^2(x_g^k) = O\left(\frac{1}{k+1}\right).
\end{align}
Thus, if we choose $z^0 \in C_f$, the ergodic iterate $({1}/{(k+1)})\sum_{i=0}^k z^i $ is an element of $C_f$ and we can bound its distance from $C_g$.  Note that this rate is strictly slower than the rate in Equation~\eqref{eq:drsergodicdistancebound}.

Although $d_{C_f}^2$ and $d_{C_g}^2$ are differentiable (Proposition~\ref{prop:factsaboutdistancesquared}), we cannot apply the results of Section~\ref{sec:lipschitzderivatives} to MAP because they require that $(\lambda_j)_{j \geq 0} \subseteq (0, 1)$. Therefore, we cannot use the regularity of $d_{C_f}^2$ and $d_{C_g}^2$ to deduce faster convergence of the AP algorithm.

This discussion shows that the convergence rates predicted in \cite{davis2014convergence} for relaxed PRS, which are known to be optimal, are faster than those predicted for MAP.  When $C_f$ and $C_g$ intersect nicely (Section~\ref{sec:feasibilityniceintersection}), the rate predicted for MAP is faster  (See Corollary~\ref{cor:AP}). In \cite[Section 8]{bauschke2013rate} a similar phenomenon is observed for the case of intersecting subspaces: DRS is faster than MAP for problems with nonregular intersection.  It would be highly satisfying to characterize this phenomenon in general.


\subsection{Parallelized model fitting and classification}\label{sec:modelfiting}
The following scenario appears in \cite[Chapter 8]{boyd2011distributed}. Consider the model fitting problem: Let $M : \vR^n \rightarrow \vR^m$ be a \emph{feature matrix}, let $b \in \vR^m$, be the  \emph{output} vector, let $l$ be a \emph{loss function} and let $r$ be a \emph{regularization function}. The goal of the \emph{model fitting problem} is to
\begin{align}\label{sec:modelfiting:eq:problem}
\Min_{x\in \vR^n } \; l(Mx -b) + r(x).
\end{align}
The function $l$ is used to enforce the constraint $Mx = b + \nu$ up to some noise $\nu$ in the measurement, while $r$ enforces the \emph{regularity} of $x$ by incorporating \emph{prior knowledge} of the form of the solution.

In this section, we present one way to split Equation~\eqref{sec:modelfiting:eq:problem}. Our discussion extends the one given in~\cite[Section 9.2]{davis2014convergence}, where only convexity of $l$ and $r$ is assumed.

\subsubsection{Auxiliary variable}
We can split Equation~\eqref{sec:modelfiting:eq:problem} by defining an auxiliary variable for $Mx - b$:
\begin{align*}
\Min_{x \in \vR^m, y \in\vR^n} & \; l\left(y \right) + r(x) \\
\mbox{subject to }  & \; Mx - y = b.\numberthis \label{sec:modelfiting:eq:pullAout}
\end{align*}
We will now analyze the convergence rates predicted in Section~\ref{sec:dualtoprimalrates} for ADMM applied to Problem~\eqref{sec:modelfiting:eq:pullAout}. Our most general convergence result applies to the auxiliary terms:
\begin{align*}
S_{d_r}(w_{d_r}^k, w^\ast) &=  \max\left\{\frac{\beta_r\alpha_M}{2}\|w_{d_r}^k - w^\ast\|_{\vR^m}^2, \frac{\mu_r}{2\|M\|^2}\|Mx^k - Mx^\ast\|_{\vR^m}^2\right\}, \\
S_{d_l}(w_{d_l}^k, w^\ast) &=  \max\left\{\frac{\beta_l}{2}\|w_{d_l}^k - w^\ast\|_{\vR^m}^2, \frac{\mu_l}{2}\|y^k - y^\ast\|_{\vR^n}^2\right\}.
\end{align*}
Theorem~\ref{prop:primaldifferentiability} shows that the best auxiliary term converges with rate $o(1/(k+1))$, the ergodic auxiliary term converges with rate $O(1/\Lambda_k)$, and the entire sequence of auxiliary terms converges with rate $o(1/\sqrt{k+1})$.

Now suppose that $\mu_l > 0$.  Then we can bound the distance of $y^k$ to the optimal point $y^\ast: = Mx^\ast - b$: $$\|\overline{y}^k - y^\ast\|^2 = O\left(\frac{1}{\Lambda_k^2}\right).$$ Now let $f = r$, let $g = l$, let $A = M$, and let $B = -I_{\cR^m}$. If $\gamma <\kappa \mu_l$, then Theorem~\ref{thm:strongconvexadmm} bounds the primal objective error and the FPR:
\begin{align*}
|l(y^k) + r(x^k) - l(Mx^\ast - b) - r(x^\ast)| = o\left(\frac{1}{{k+1}}\right) && \mathrm{and} &&  \|Mx^k - b- y^k\|^2 = o\left(\frac{1}{({k+1})^2}\right).
\end{align*}
In particular, if $l$ is Lipschitz, then $|l(y^k) - l(Mx^k - b)| = o\left(1/({k+1})\right)$. Thus, we have
\begin{align*}
0 \leq l(Mx^k - b) + r(x^k) - l(Mx^\ast - b) - r(x^\ast) = o\left(\frac{1}{{k+1}}\right).
\end{align*}
A similar result holds if $r$ is strongly convex and we assign $g= r$ and $f = l$, etc.

We can improve the above sublinear rate to a linear rate in any of the following cases (Theorem~\ref{thm:admmlinearconvergence}):
\begin{itemize}
\item $r$ is differentiable and strongly convex and $MM^\ast$ is strongly monotone;
\item $l$ is differentiable and strongly convex;
\item $r$ is differentiable, $MM^\ast$ is strongly monotone, and $l$ is strongly convex;
\item $r$ is strongly convex and $l$ is differentiable.
\end{itemize}

\section{Conclusion}
In this paper, we provided a comprehensive convergence rate analysis of relaxed PRS and ADMM under various regularity assumptions. By appealing to the examples developed in \cite{davis2014convergence}, we showed that several of the convergence rates cannot be improved.  All results follow from some combination of a lemma that deduces convergence rates of summable monotonic sequences (Lemma~\ref{lem:sumsequence}), a simple diagram (Figure~\ref{fig:DRSTR}), and fundamental inequalities (Propositions~\ref{prop:DRSupper},~\ref{prop:DRSlower},~\ref{prop:DRSupperfglipschitz},  and~\ref{prop:fundamentaldiff}) that relate the FPR to the objective error of the relaxed PRS algorithm.  Thus, together with~\cite{davis2014convergence}, we have developed a comprehensive convergence rate of the relaxed PRS and ADMM algorithms under the standard regularity assumptions in convex optimization.

\begin{acknowledgements}D. Davis' work is partially supported by NSF GRFP grant DGE-0707424. W. Yin's work is partially supported by NSF grants DMS-0748839 and DMS-1317602.
\end{acknowledgements}

\bibliographystyle{spmpsci}
\bibliography{bibliography}

\appendix
\section*{Appendices}
\addcontentsline{toc}{section}{Appendices}
\renewcommand{\thesection}{\Alph{section}}
\renewcommand{\theequation}{\Alph{section}.\arabic{equation}}
\section{Technical results from Section~\ref{sec:lipschitzderivatives:part:general}}\label{app:lipschitzderivatives:part:general}

The following Theorem will be used several times throughout our analysis.
\begin{theorem}[Descent theorem/Baillon-Haddad]\label{thm:descent}
Suppose that $g : \cH \rightarrow (-\infty, \infty]$ is closed, proper, convex, and differentiable.  If $\nabla g$ is $({1}/{\beta})$-Lipschitz, then for all $x, y \in \cH$, we have the upper bound
\begin{align}\label{eq:lipschitzderivative}
g(x) &\leq g(y) + \dotp{ x- y, \nabla g(y)} + \frac{1}{2\beta} \|x - y\|^2,
\end{align}
and the cocoercive inequality
\begin{align}\label{eq:baillon}
\beta\|\nabla g(x) - \nabla g(y)\|^2 &\leq \dotp{x - y, \nabla g(x) - \nabla g(y)}.
\end{align}
\end{theorem}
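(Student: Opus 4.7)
The plan is to prove the two inequalities in sequence, with the descent inequality coming first because it will serve as the key input to the cocoercive (Baillon--Haddad) inequality.

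\textbf{Descent inequality.} I would start from the fundamental theorem of calculus along the segment joining $y$ and $x$, writing
\[
g(x) - g(y) = \int_0^1 \dotp{\nabla g(y + t(x-y)), x - y}\,dt
= \dotp{\nabla g(y), x - y} + \int_0^1 \dotp{\nabla g(y + t(x-y)) - \nabla g(y), x - y}\,dt.
\]
Applying Cauchy--Schwarz and the $(1/\beta)$-Lipschitz bound on $\nabla g$ to the integrand yields an integral bounded by $\int_0^1 (t/\beta)\|x-y\|^2\,dt = \|x-y\|^2/(2\beta)$, which gives \eqref{eq:lipschitzderivative}.

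\textbf{Cocoercive inequality.} The main obstacle here is that the cocoercive estimate is quadratic in $\|\nabla g(x)-\nabla g(y)\|$, so pointwise convexity alone does not suffice; one needs a ``minimization trick.'' Fix $y\in\cH$ and introduce the auxiliary function $h(x) := g(x) - \dotp{\nabla g(y), x}$. Then $h$ is closed, proper, convex, differentiable, with $\nabla h(x) = \nabla g(x) - \nabla g(y)$ still $(1/\beta)$-Lipschitz, and $\nabla h(y) = 0$, so that $y$ is a global minimizer of $h$. Applying the descent inequality already established to $h$ at the point $x - \beta\nabla h(x)$ gives
\[
h(y) \;\leq\; h\!\left(x - \beta\nabla h(x)\right) \;\leq\; h(x) - \beta\|\nabla h(x)\|^2 + \frac{\beta}{2}\|\nabla h(x)\|^2 \;=\; h(x) - \frac{\beta}{2}\|\nabla h(x)\|^2.
\]
Translating back to $g$ yields
\[
g(x) - g(y) - \dotp{\nabla g(y), x - y} \;\geq\; \frac{\beta}{2}\|\nabla g(x) - \nabla g(y)\|^2.
\]

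\textbf{Symmetrization.} The final step is to swap the roles of $x$ and $y$ to obtain the twin inequality and add the two. The first-order terms combine into $\dotp{\nabla g(x) - \nabla g(y), x - y}$, the $g(x),g(y)$ terms cancel, and the right-hand side becomes $\beta\|\nabla g(x) - \nabla g(y)\|^2$, yielding \eqref{eq:baillon}. The overall argument is standard (see e.g.\ \cite{bauschke2011convex}), and the only delicate point is the reduction to an auxiliary function that is minimized at $y$ in order to extract the quadratic lower bound; once this trick is in place the rest is algebra.
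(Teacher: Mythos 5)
Your argument is correct, but it is worth noting that the paper does not actually prove this theorem: its ``proof'' consists of citing \cite[Theorem 18.15(iii)]{bauschke2011convex} for the descent inequality and Baillon--Haddad \cite{baillon1977quelques} for the cocoercivity, so your self-contained derivation is a genuinely different (and more informative) route. Your proof is the standard textbook one and each step checks out: the fundamental theorem of calculus along the segment is legitimate here because Fr\'echet differentiability plus Lipschitz continuity of $\nabla g$ makes $t \mapsto \dotp{\nabla g(y+t(x-y)), x-y}$ continuous, and the Cauchy--Schwarz plus Lipschitz bound on the remainder integral gives exactly $\|x-y\|^2/(2\beta)$. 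The reduction to $h(x) = g(x) - \dotp{\nabla g(y), x}$ is the right trick, and you correctly use convexity of $h$ (not just $\nabla h(y)=0$) to conclude that $y$ is a global minimizer before evaluating the descent inequality at the ``gradient step'' point $x - \beta\nabla h(x)$; this is precisely where convexity enters, since the descent inequality alone holds without it. The symmetrization then delivers \eqref{eq:baillon} with the stated constant. What your approach buys is transparency about which hypothesis drives which conclusion --- Lipschitz continuity alone gives \eqref{eq:lipschitzderivative}, while convexity is essential for \eqref{eq:baillon} --- at the cost of a page of argument the authors preferred to outsource.
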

\begin{proof}
See \cite[Theorem 18.15(iii)]{bauschke2011convex} for Equation~\eqref{eq:lipschitzderivative}, and~\cite{baillon1977quelques} for Equation~\eqref{eq:baillon}.
\qed\end{proof}

\begin{proof}[Proof of Proposition~\ref{prop:DRSupperfglipschitz}]
Because $\nabla f$ is $(1/\beta)$-Lipschitz, we have
\begin{align*}
f(x_g) &\stackrel{\eqref{eq:lipschitzderivative}}{\leq} f(x_f) + \dotp{x_g - x_f, \nabla f(x_f)} + \frac{1}{2\beta}\|x_g - x_f\|^2,\numberthis \label{eq:fxgupperbound}\\
S_f(x_f,x^*)&\stackrel{\eqref{eq:strongconvexandlipschitzlowerbound}}{\ge} \frac{\beta}{2}\|\nabla f(x_f) - \nabla f(x^\ast)\|^2\numberthis \label{eq:Sflowerbound}.
\end{align*}
We now derive some identities that will be used below to bound $f(x_g) + g(x_g) - f(x^*) - g(x^*)$.
By applying the identity $z^\ast - x^\ast = \gamma\tnabla g(x^\ast) = - \gamma\nabla f(x^\ast)$ (Equation~\eqref{eq:gradoptimality}), the cosine rule \eqref{eq:cosinerule}, and Equation \eqref{eq:DRSmainidentity2} multiple times, we have
\begin{align*}
2\dotp{z - z^{+}, z^\ast - x^\ast} &+4\gamma \lambda \dotp{x_g - x_f, \nabla f(x_f)} = 4\gamma \lambda \dotp{x_g - x_f, \nabla f(x_f)-\nabla f(x^\ast)} \\
&= 4\lambda \dotp{ \gamma \tnabla g(x_g) +\gamma \nabla f(x_f), \gamma \nabla f(x_f)-\gamma \nabla f(x^\ast)}  \\
&= 2\lambda \left( \|x_f - x_g\|^2 + \|\gamma\nabla f(x_f) - \gamma\nabla f(x^\ast)\|^2-\|\gamma \tnabla g(x_g) - \gamma \tnabla g(x^\ast) \|^2\right).
\end{align*}
By Equation \eqref{eq:DRSmainidentity2}
$
\left(1-\frac{1}{\lambda}\right) \|z - z^+\|^2 + 2\lambda\left(\frac{\gamma}{\beta} + 1\right) \|x_g - x_f\|^2 = \left(1 + \frac{ \left( \gamma - \beta\right)}{2\beta\lambda}\right)\|z - z^+\|^2.
$
Using the above two identities, we have
\begin{align*}
&4\gamma\lambda\big(f(x_g) + g(x_g) - f(x^\ast) - g(x^\ast)\big)\\
&\stackrel{\eqref{eq:fxgupperbound}}{\le}4\gamma\lambda\big(f(x_f) + g(x_g) - f(x^\ast) - g(x^\ast)\big)+ 4\gamma\lambda\dotp{x_g - x_f, \nabla f(x_f)} + \frac{2\gamma\lambda}{\beta}\|x_g - x_f\|^2\\
&\stackrel{\eqref{prop:DRSupper:eq:aux}}{\le} \|z - z^*\|^2 - \|z^+ - z^*\|^2 +\big(2\dotp{z-z^+,z^*-x^*}+ 4\gamma\lambda\dotp{x_g - x_f, \nabla f(x_f)}\big) + \left(1 - \frac{1}{\lambda}\right)\|z^{+} - z\|^2\\
& \hspace{20pt}+\frac{2\gamma\lambda}{\beta}\|x_g - x_f\|^2-4\gamma\lambda S_f(x_f,x^*) \\
&= \|z - z^*\|^2 - \|z^+ - z^*\|^2 + \left(\left(1-\frac{1}{\lambda}\right) \|z - z^+\|^2 + 2\lambda\left(\frac{\gamma}{\beta} + 1\right) \|x_g - x_f\|^2 \right) \\
&\hspace{20pt}+ 2\lambda\|\gamma\nabla f(x_f) - \gamma\nabla f(x^\ast)\|^2-4\gamma\lambda S_f(x_f,x^*) -2\lambda \|\gamma \tnabla g(x_g) - \gamma \tnabla g(x^\ast)\|^2\\
&\stackrel{\eqref{eq:Sflowerbound}}{\le}\|z - z^*\|^2 - \|z^+ - z^*\|^2 +\left(1 + \frac{ \left( \gamma - \beta\right)}{2\beta\lambda}\right)\|z - z^+\|^2+2\gamma\lambda(\gamma-\beta)\|\nabla f(x_f) - \nabla f(x^\ast)\|^2.
\end{align*}
If $\gamma \leq \beta$, we can drop the last term. If $\gamma > \beta$, we apply the upper bound on $S_f(x_f, x)$ in \eqref{prop:sumauxilliaryterms:eq:main} to get
\begin{align*}
2\gamma\lambda\left( \gamma - \beta\right)\|\nabla f(x_f) - \nabla f(x^\ast)\|^2 &\leq \frac{ \left( \gamma - \beta\right)}{2\beta}\left(\|z - z^\ast\|^2 - \|z^+  -z^\ast\|^2 + \left(1 - \frac{1}{\lambda}\right) \|z - z^+\|^2\right),
\end{align*}
and the result follows.
If $\nabla g$ is $({1}/{\beta})$-Lipschitz, the argument is symmetric, so we omit the proof.

\qed\end{proof}

\section{Proofs from Section~\ref{sec:constantrelaxationandbetterates}}\label{app:constantrelaxationandbetterates}

The following two results are well known, but we include some of the proofs for completeness. They will help us tighten the bounds that we develop below.

\begin{lemma}[Extra contraction of derivative operator]\label{lem:extracontraction}
Suppose that $\nabla g$ is $({1}/{\beta})$-Lipschitz, and let $x, y \in \cH$. If $x^+ = \prox_{\gamma g}(x)$ and $y^+ = \prox_{\gamma f}(y)$, then
\begin{align}\label{eq:lipschitzcontraction}
\|\nabla g(x^+) - \nabla g(y^+)\|^2 \leq \frac{1}{\gamma^2 + \beta^2} \|x - y\|^2.
\end{align}
\end{lemma}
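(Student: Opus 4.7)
The plan is to combine two ingredients already available in the paper: the firm nonexpansiveness of $\prox_{\gamma g}$ (Part~\ref{cor:proxcontraction} of Proposition~\ref{prop:basicprox}) and the Baillon--Haddad cocoercivity inequality~\eqref{eq:baillon} for $\nabla g$. The key observation is that these two facts contribute a $\gamma^2$ and a $\beta^2$, respectively, and that they add in the final bound.

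First, I would use the optimality condition of the proximal operator (Part~\ref{prop:basicprox:part:optprox} of Proposition~\ref{prop:basicprox}) to identify the residuals:
\[
x - x^+ \;=\; \gamma\,\nabla g(x^+), \qquad y - y^+ \;=\; \gamma\,\nabla g(y^+).
\]
(Note that the conclusion forces the reading $y^+ = \prox_{\gamma g}(y)$ in the hypothesis.) Substituting these into the firm nonexpansiveness inequality~\eqref{cor:proxcontraction:eq:main} yields the ``Pythagorean'' bound
\[
\|x^+ - y^+\|^2 + \gamma^2\,\|\nabla g(x^+) - \nabla g(y^+)\|^2 \;\leq\; \|x - y\|^2.
\]

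Second, I would bootstrap a lower bound on $\|x^+ - y^+\|^2$ from Baillon--Haddad. By~\eqref{eq:baillon} and Cauchy--Schwarz,
\[
\beta\,\|\nabla g(x^+) - \nabla g(y^+)\|^2 \;\leq\; \dotp{x^+ - y^+,\, \nabla g(x^+) - \nabla g(y^+)} \;\leq\; \|x^+ - y^+\|\cdot\|\nabla g(x^+) - \nabla g(y^+)\|,
\]
so dividing by $\|\nabla g(x^+) - \nabla g(y^+)\|$ (trivial if it is zero) gives $\|x^+ - y^+\| \geq \beta\,\|\nabla g(x^+) - \nabla g(y^+)\|$. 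Plugging this estimate into the previous display produces
\[
(\beta^2 + \gamma^2)\,\|\nabla g(x^+) - \nabla g(y^+)\|^2 \;\leq\; \|x - y\|^2,
\]
which is exactly~\eqref{eq:lipschitzcontraction}.

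There is no real obstacle here; the only conceptual point is recognizing that the $\gamma^2$ term comes ``for free'' from the residual formula for $\prox_{\gamma g}$ inside firm nonexpansiveness, while the $\beta^2$ term is purchased separately from cocoercivity. This also suggests why the constant $1/(\gamma^2+\beta^2)$ is the natural sum-of-squares form: firm nonexpansiveness and cocoercivity act on orthogonal-looking components.
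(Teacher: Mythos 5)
Your proof is correct and follows essentially the same route as the paper: firm nonexpansiveness of $\prox_{\gamma g}$ combined with the residual identity gives the $\gamma^2$ term, and the bound $\beta\|\nabla g(x^+)-\nabla g(y^+)\|\le\|x^+-y^+\|$ gives the $\beta^2$ term. The only (harmless) difference is that you obtain that last bound via Baillon--Haddad plus Cauchy--Schwarz, whereas the paper reads it off directly from the $(1/\beta)$-Lipschitz continuity of $\nabla g$; you also correctly spot that the hypothesis should read $y^+=\prox_{\gamma g}(y)$.
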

\begin{proof}
From the identity $\gamma \nabla g(x^+) = x - x^+$, the contraction property in Proposition~\ref{prop:basicprox}, and the Lipschitz continuity of $\nabla g$ we have
\begin{align*}
\beta^2\|\nabla g(x^+) - \nabla g(y^+)\|^2 \leq \|x^+ - y^+\|^2 && \mathrm{and} && \gamma^2 \|\nabla g(x^+) - \nabla g(y^+)\|^2 \leq \|x - y\|^2 - \|x^+ - y^+\|^2
\end{align*}
Adding both equations and rearranging proves the result.
\qed\end{proof}

The following is a direct corollary of the descent theorem (Theorem~\ref{thm:descent}).

\begin{corollary}[Joint descent theorem]\label{cor:jointdescent}
If $g$ is differentiable and $\nabla g$ is $({1}/{\beta})$-Lipschitz, then for all pairs $x, y \in  \dom(f)$, points $z \cH$, and subgradients $\tnabla f(x) \in \partial f(x)$, we have
\begin{align}\label{cor:jointdescent:eq:main}
f(x) + g(x) &\leq f(y) + g(y) + \dotp{ x- y, \nabla g(z) + \tnabla f(x)} + \frac{1}{2\beta} \|z - x\|^2.
\end{align}
\end{corollary}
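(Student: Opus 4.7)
The plan is to combine the standard descent inequality of Theorem~\ref{thm:descent} with two convexity inequalities (one for $g$ and one for $f$), chained together at the auxiliary point $z$. The role of $z$ is purely as a linearization center for $\nabla g$: the descent theorem controls the gap between $g(x)$ and its linearization at $z$, while convexity of $g$ controls the gap between that linearization and $g(y)$.

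First I would apply Equation~\eqref{eq:lipschitzderivative} of Theorem~\ref{thm:descent} with the pair $(x,z)$ to obtain
\begin{align*}
g(x) \le g(z) + \dotp{x-z,\nabla g(z)} + \frac{1}{2\beta}\|x-z\|^2.
\end{align*}
Next, the convexity of $g$ at the pair $(y,z)$ gives $g(z) \le g(y) + \dotp{z-y,\nabla g(z)}$. Substituting this bound for $g(z)$ and collecting the two inner products using $\dotp{x-z,\nabla g(z)}+\dotp{z-y,\nabla g(z)}=\dotp{x-y,\nabla g(z)}$ yields
\begin{align*}
g(x) \le g(y) + \dotp{x-y,\nabla g(z)} + \frac{1}{2\beta}\|x-z\|^2.
\end{align*}

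Finally, I would invoke the subgradient inequality for $f$ at $(x,y)$, namely $f(x) \le f(y) + \dotp{x-y,\tnabla f(x)}$ (which is just convexity rewritten from the definition of $\partial f$). Adding this to the previous display and combining the two inner products into $\dotp{x-y,\nabla g(z)+\tnabla f(x)}$ produces exactly \eqref{cor:jointdescent:eq:main}. There is no real obstacle here: the statement is essentially a three-line chain, and the only thing to keep straight is that the Lipschitz/descent bound is used only for $g$ between $x$ and $z$, while convexity (not smoothness) carries $z$ to $y$ on the $g$ side and $x$ to $y$ on the $f$ side.
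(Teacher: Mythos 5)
Your proof is correct and follows essentially the same route as the paper: the descent inequality \eqref{eq:lipschitzderivative} applied to $g$ between $x$ and $z$, the first-order convexity inequality for $g$ carrying $z$ to $y$, and the subgradient inequality for $f$ added at the end. The paper merely compresses the first two steps into a single displayed chain of inequalities.
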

\begin{proof}
Inequality \eqref{cor:jointdescent:eq:main} follows from adding  the upper bound
\begin{align*}
g(x) - g(y) &\stackrel{\eqref{eq:lipschitzderivative}}{\leq} g(z) - g(y) + \dotp{ x- z, \nabla g(z)} + \frac{1}{2\beta}\|z - x\|^2 \leq \dotp{x - y, \nabla g(z)} + \frac{1}{2\beta}\|z - x\|^2,
\end{align*}
with the subgradient inequality:
\begin{equation}
f(x) \le f(y) + \dotp{x-y,\tnabla f(x)}.\label{eq:subineq}
\end{equation}
\qed
\end{proof}

The following theorem develops an alternative fundamental inequality to the one in Proposition~\ref{prop:DRSupperfglipschitz}.

\begin{proposition}[Fundamental inequality for differentiable functions]\label{prop:fundamentaldiff}
For all $x \in \dom(f)$,
\begin{align*}
2\gamma&(f(x_f^k) + g(x_f^k) - f(x) - g(x)) + \left(2\gamma \beta - \frac{\gamma^3}{\beta}\right)\| \nabla g(x_g^{k+1}) - \nabla g(x_g^k)\|^2 +  \|x_g^{k+1} - x\|^2 +  \|x_g^{k+1} - x_g^{k} \|^2 \\
&\leq \|x_g^k - x\|^2. \numberthis \label{prop:fundamentaldiff:eq:main}
\end{align*}
\end{proposition}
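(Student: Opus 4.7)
The plan is to apply Corollary~\ref{cor:jointdescent} (the joint descent theorem), but with the carefully chosen base point $z=x_g^{k+1}$ rather than the more obvious $z=x_g^k$. This choice is what makes all $x$-dependent cross terms cancel. First I would collect three identities that will be used throughout. From optimality of $x_f^k=\prox_{\gamma f}(2x_g^k-z^k)$ and the identity $z^k=x_g^k+\gamma\nabla g(x_g^k)$, we get
\[
\tnabla f(x_f^k)=\frac{1}{\gamma}(x_g^k-x_f^k)-\nabla g(x_g^k),
\]
hence $\nabla g(x_g^{k+1})+\tnabla f(x_f^k)=\delta_g+\frac{1}{\gamma}(x_g^k-x_f^k)$, where I abbreviate $\delta_g:=\nabla g(x_g^{k+1})-\nabla g(x_g^k)$. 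Next, using the DRS update $z^{k+1}=z^k+(x_f^k-x_g^k)$ (since $\lambda_k=1/2$) together with $z^{k+1}=x_g^{k+1}+\gamma\nabla g(x_g^{k+1})$, I obtain the crucial identity
\[
x_f^k-x_g^{k+1}=\gamma\,\delta_g,
\]
which in particular gives $\|x_g^{k+1}-x_f^k\|^2=\gamma^2\|\delta_g\|^2$.

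Next, applying Corollary~\ref{cor:jointdescent} with the pair $(x_f^k,x)$ and base point $x_g^{k+1}$, and multiplying by $2\gamma$, I get
\begin{align*}
2\gamma\big(f(x_f^k)+g(x_f^k)-f(x)-g(x)\big)
&\leq 2\gamma\dotp{x_f^k-x,\delta_g}+2\dotp{x_f^k-x,x_g^k-x_f^k}+\frac{\gamma^{3}}{\beta}\|\delta_g\|^2.
\end{align*}
Now I would substitute $x_f^k=x_g^{k+1}+\gamma\delta_g$ in the first inner product, giving $2\gamma\dotp{x_f^k-x,\delta_g}=2\gamma\dotp{x_g^{k+1}-x,\delta_g}+2\gamma^{2}\|\delta_g\|^2$. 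For the second inner product, the cosine rule \eqref{eq:cosinerule} gives
\[
2\dotp{x_f^k-x,x_g^k-x_f^k}=\|x_g^k-x\|^2-\|x_f^k-x\|^2-\|x_f^k-x_g^k\|^2,
\]
after which I expand $\|x_f^k-x\|^2$ and $\|x_f^k-x_g^k\|^2$ by inserting $x_f^k=x_g^{k+1}+\gamma\delta_g$. The essential cancellation then occurs: the two copies of $2\gamma\dotp{x_g^{k+1}-x,\delta_g}$ appear with opposite signs and cancel, and the three stray $\gamma^{2}\|\delta_g\|^2$ terms also cancel. What remains, after collecting, is
\[
2\gamma\big(f(x_f^k)+g(x_f^k)-f(x)-g(x)\big)\leq\|x_g^k-x\|^2-\|x_g^{k+1}-x\|^2-\|x_g^{k+1}-x_g^k\|^2-2\gamma\dotp{x_g^{k+1}-x_g^k,\delta_g}+\frac{\gamma^{3}}{\beta}\|\delta_g\|^2.
\]

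Finally, I invoke the Baillon--Haddad cocoercive inequality~\eqref{eq:baillon} in the form $\dotp{x_g^{k+1}-x_g^k,\delta_g}\geq\beta\|\delta_g\|^2$, which converts $-2\gamma\dotp{x_g^{k+1}-x_g^k,\delta_g}\leq -2\gamma\beta\|\delta_g\|^2$, and rearranging produces exactly Equation~\eqref{prop:fundamentaldiff:eq:main}. The main obstacle is purely the choice of base point $z=x_g^{k+1}$ in joint descent: with $z=x_g^k$ one is left with a $-2\gamma\dotp{x_g^{k+1}-x,\delta_g}$ remainder that cannot be signed for arbitrary $x\in\dom(f)$, while with $z=x_g^{k+1}$ the extra $\dotp{x_f^k-x,\delta_g}$ term produced by joint descent is precisely what kills the problematic cross term. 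Everything else is algebra, cosine rule, and one application of cocoercivity.
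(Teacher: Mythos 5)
Your proposal is correct and follows essentially the same route as the paper's proof: both apply the joint descent inequality (Corollary~\ref{cor:jointdescent}) at the base point $z=x_g^{k+1}$, exploit the identities $x_f^k-x_g^{k+1}=\gamma(\nabla g(x_g^{k+1})-\nabla g(x_g^k))$ and $\gamma(\nabla g(x_g^{k+1})+\tnabla f(x_f^k))=x_g^k-x_g^{k+1}$, invoke the cosine rule, and finish with the Baillon--Haddad cocoercive inequality. The only difference is a cosmetic reordering of the algebra (you apply the cosine rule to $2\dotp{x_f^k-x,\,x_g^k-x_f^k}$ and then expand, whereas the paper first splits $x_f^k-x=(x_g^{k+1}-x)+(x_f^k-x_g^{k+1})$), which yields the same terms.
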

\begin{proof}
The following identities are straightforward from Lemma~\ref{prop:DRSmainidentity}:
\begin{align*}
x_{g}^{k} - x_{g}^{k+1} = \gamma(\nabla g(x_g^{k+1}) + \tnabla f(x_f^k) ) && \mathrm{and} && x_{f}^k - x_{g}^{k+1} = \gamma(\nabla g(x_g^{k+1}) - \nabla g(x_g^{k})). \numberthis \label{prop:fundamentaldiff:mainidentities}
\end{align*}
Therefore,
\begin{align*}
2\gamma&(f(x_f^k) + g(x_f^k) - f(x) - g(x)) + \left(2\gamma \beta - \frac{\gamma^3}{\beta}\right)\| \nabla g(x_g^{k+1}) - \nabla g(x_g^k)\|^2 \\
&\stackrel{\eqref{cor:jointdescent:eq:main}}{\leq} 2\gamma\dotp{x_f^k - x, \tnabla f(x_f^k) + \nabla g(x_g^{k+1})} + \frac{\gamma}{\beta}\|x_f^{k} - x_g^{k+1}\|^2 + \left(2\gamma \beta - \frac{\gamma^3}{\beta}\right)\| \nabla g(x_g^{k+1}) - \nabla g(x_g^k)\|^2 \\
&\stackrel{\eqref{prop:fundamentaldiff:mainidentities}}{=}2\dotp{x_f^k - x, x_g^k - x_g^{k+1}}  + 2\gamma \beta\| \nabla g(x_g^{k+1}) - \nabla g(x_g^k)\|^2 \\
&= 2\dotp{x_g^{k+1} - x, x_g^k - x_g^{k+1}} + 2\dotp{x_f^k - x_{g}^{k+1}, x_g^k - x_g^{k+1}}  + 2\gamma \beta\| \nabla g(x_g^{k+1}) - \nabla g(x_g^k)\|^2 \\
&\stackrel{\eqref{eq:cosinerule}}{\leq} \|x_g^{k} - x\|^2 - \|x_g^{k+1} - x\|^2 - \|x_g^{k} - x_g^{k+1}\|^2 \\
&\stackrel{\eqref{prop:fundamentaldiff:mainidentities}}{+} 2\gamma\dotp{\nabla g(x_g^{k+1}) - \nabla g(x_g^{k}), x_g^k - x_g^{k+1}} + 2\gamma \beta\| \nabla g(x_g^{k+1}) - \nabla g(x_g^k)\|^2\\
&\stackrel{\eqref{eq:baillon}}{\leq} \|x_g^{k} - x\|^2 - \|x_g^{k+1} - x\|^2 - \|x_g^{k} - x_g^{k+1}\|^2. \numberthis \label{eq:before32}
\end{align*}
Equation~\eqref{prop:fundamentaldiff:eq:main} now follows by rearranging Equation~\eqref{eq:before32}.
\qed\end{proof}

The following proposition uses the fundamental inequality in Proposition~\ref{prop:fundamentaldiff} evaluated at the point $x = x_f^{k-1}$ to construct a monotonic sequence that dominates the objective error. We introduce a factor $\theta \in [0, 1]$ that we will optimize in Lemma~\ref{lem:howtochoosea} in order to maximize the range of $\gamma$ for which the sequence remains monotonic.

\begin{proposition}[Monotonicity]\label{prop:lipschitzmono}
For scalars $\theta \in [0, 1]$ and integers $k \geq 1$, the following bound holds:
\begin{align*}
2\gamma&(f(x_f^k) + g(x_f^k) - f(x^\ast) - g(x^\ast)) + \left(2\gamma \beta - \frac{\gamma^3}{\beta}\right) \| \nabla g(x_g^{k+1}) - \nabla g(x_g^k)\|^2 + \|x_g^{k+1} - x_g^{k} \|^2 \\
&\leq 2\gamma(f(x_f^{k-1}) + g(x_f^{k-1}) - f(x^\ast) - g(x^\ast)) + \theta\gamma^2\|\nabla g(x_g^k) - \nabla g(x_g^{k-1})\|^2 + \frac{(1-\theta)\gamma^2}{\beta^2}\|x_g^k - x_g^{k-1}\|^2 \numberthis \label{prop:lipschitzmono:eq:main}.
\end{align*}
\end{proposition}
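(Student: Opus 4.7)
The plan is to apply the fundamental inequality of Proposition~\ref{prop:fundamentaldiff} at a shifted iterate and then absorb one of the leftover terms by a convex combination of two natural bounds. Concretely, I will use Proposition~\ref{prop:fundamentaldiff} at iteration $k$ with the test point $x = x_f^{k-1}$, which gives
\begin{align*}
2\gamma&(f(x_f^k) + g(x_f^k) - f(x_f^{k-1}) - g(x_f^{k-1})) + \Big(2\gamma\beta - \tfrac{\gamma^3}{\beta}\Big)\|\nabla g(x_g^{k+1}) - \nabla g(x_g^k)\|^2 \\
&+ \|x_g^{k+1} - x_f^{k-1}\|^2 + \|x_g^{k+1} - x_g^k\|^2 \;\le\; \|x_g^k - x_f^{k-1}\|^2.
\end{align*}
After adding $2\gamma(f(x_f^{k-1}) + g(x_f^{k-1}) - f(x^\ast) - g(x^\ast))$ to both sides and dropping the nonpositive term $-\|x_g^{k+1} - x_f^{k-1}\|^2$, the target inequality~\eqref{prop:lipschitzmono:eq:main} reduces to the single estimate
\begin{align*}
\|x_g^k - x_f^{k-1}\|^2 \;\le\; \theta\,\gamma^2\|\nabla g(x_g^k) - \nabla g(x_g^{k-1})\|^2 + \tfrac{(1-\theta)\gamma^2}{\beta^2}\|x_g^k - x_g^{k-1}\|^2.
\end{align*}

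To obtain this, I would invoke the identity $x_f^{k-1} - x_g^k = \gamma(\nabla g(x_g^k) - \nabla g(x_g^{k-1}))$, which is exactly the second identity in~\eqref{prop:fundamentaldiff:mainidentities} shifted back by one index (and which follows directly from Lemma~\ref{prop:DRSmainidentity}). This yields the equality
\begin{align*}
\|x_g^k - x_f^{k-1}\|^2 \;=\; \gamma^2 \|\nabla g(x_g^k) - \nabla g(x_g^{k-1})\|^2,
\end{align*}
and combining this with the $(1/\beta)$-Lipschitz continuity of $\nabla g$ also gives $\|x_g^k - x_f^{k-1}\|^2 \leq (\gamma^2/\beta^2)\|x_g^k - x_g^{k-1}\|^2$. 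Writing $\|x_g^k - x_f^{k-1}\|^2 = \theta\|x_g^k - x_f^{k-1}\|^2 + (1-\theta)\|x_g^k - x_f^{k-1}\|^2$ and bounding the two pieces by their respective expressions produces exactly the required mixed estimate for any $\theta \in [0,1]$.

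There is no real obstacle here; the proof is essentially a clean algebraic substitution into Proposition~\ref{prop:fundamentaldiff}. The only point that requires mild care is the bookkeeping on indices: the identity from~\eqref{prop:fundamentaldiff:mainidentities} relates $x_f^k$ to $x_g^{k+1}$ via the $\nabla g$ difference at consecutive iterates, so I must shift $k \mapsto k-1$ before using it. The parameter $\theta$ is introduced purely so that the resulting sequence remains monotonic for as large a range of $\gamma$ as possible; this flexibility will be exploited in Lemma~\ref{lem:howtochoosea} to optimize the admissible stepsize, but it plays no role in the derivation of the inequality itself. Note also that the sign of the coefficient $2\gamma\beta - \gamma^3/\beta$ is immaterial in the argument above, since we only use the fundamental inequality as stated without exploiting positivity of that term.
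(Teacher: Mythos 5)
Your proof is correct and is essentially identical to the paper's: both plug $x = x_f^{k-1}$ into Proposition~\ref{prop:fundamentaldiff}, drop the nonnegative term $\|x_g^{k+1}-x_f^{k-1}\|^2$, and handle the leftover $\|x_g^k - x_f^{k-1}\|^2$ via the shifted identity $x_f^{k-1}-x_g^k=\gamma(\nabla g(x_g^k)-\nabla g(x_g^{k-1}))$ together with the Lipschitz bound, split as a $\theta$-convex combination. The index bookkeeping and the role of $\theta$ are handled exactly as in the paper.
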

\begin{proof}
Plug $x = x_{f}^{k-1}$ into Equation~\eqref{prop:fundamentaldiff:eq:main} and subtract $f(x^\ast) + g(x^\ast)$ from both sides. Equation~\eqref{prop:lipschitzmono:eq:main} follows from the identity
\begin{align*}
x_g^k - x_{f}^{k-1} = \gamma (\nabla g(x_g^{k-1}) - \nabla g(x_g^{k})),
\end{align*}
the bound $\|\nabla g(x_g^k) - \nabla g(x_g^{k-1})\|^2 \leq ({1}/{\beta^2} )\|x_g^k - x_g^{k-1}\|^2$, rearranging, and dropping the positive term $\|x_g^{k+1} - x_f^{k-1}\|^2$.
\qed\end{proof}

We now choose the factor $\theta$ in order to maximize the range of implicit stepsize parameters $\gamma$ for which the sequence constructed in Proposition~\ref{prop:lipschitzmono} remains monotonic.
\begin{lemma}[Maximizing $\gamma$ range]\label{lem:howtochoosea}
Let $\beta > 0$, and let
\begin{align}\label{eq:kappa}
\kappa := \sup\left\{ \frac{\gamma}{\beta} \mid \gamma > 0, \theta \in [0, 1], \theta\gamma^2\leq  \left(2\gamma \beta - \frac{\gamma^3}{\beta}\right) , \; \frac{(1-\theta)\gamma^2}{\beta^2} \leq 1 \right\}.
\end{align}
Then $\kappa$ is the positive root of $x^3 +x^2 - 2x- 1$. Therefore, $(\gamma^\ast, \theta^\ast) = (\kappa\beta, 1 - 1/\kappa^2)$.
\end{lemma}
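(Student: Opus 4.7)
The plan is to normalize by setting $x = \gamma/\beta$, reduce the three-variable feasibility problem to a one-variable condition in $x$, and then identify the supremum as the positive real root of a cubic. First I would substitute $\gamma = x\beta$ into both constraints and divide through by $\beta^2$, turning the defining set of $\kappa$ into
\begin{equation*}
\kappa = \sup\bigl\{x > 0 : \exists\, \theta \in [0,1] \text{ with } \theta x^2 \le 2x - x^3 \text{ and } (1-\theta)x^2 \le 1\bigr\}.
\end{equation*}
This eliminates $\beta$ from the problem entirely.

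Next I would isolate $\theta$ in each constraint. The first constraint is equivalent to $\theta \le 2/x - x$, and the second to $\theta \ge 1 - 1/x^2$. A feasible $\theta$ exists if and only if the upper bound dominates the lower bound (after intersecting with $[0,1]$, but I will verify that interval membership is automatic at the extremal point), i.e.,
\begin{equation*}
1 - \frac{1}{x^2} \le \frac{2}{x} - x.
\end{equation*}
Multiplying through by $x^2 > 0$ and rearranging yields $x^3 + x^2 - 2x - 1 \le 0$. Since the cubic $p(x) = x^3 + x^2 - 2x - 1$ satisfies $p(1) = -1 < 0$ and $p(2) = 7 > 0$, it has a unique positive root $\kappa \in (1,2)$; the feasible set of $x$-values is exactly $(0, \kappa]$, whence the supremum equals $\kappa$.

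Finally, I would read off the optimal pair. At $x = \kappa$ the inequality $1 - 1/\kappa^2 \le 2/\kappa - \kappa$ becomes an equality (both sides equal the same value because $p(\kappa) = 0$), so the upper and lower bounds on $\theta$ coincide and force $\theta^\ast = 1 - 1/\kappa^2$; correspondingly $\gamma^\ast = \kappa \beta$. A short check confirms $\theta^\ast \in [0,1]$: since $\kappa > 1$, one has $0 < 1 - 1/\kappa^2 < 1$. The only subtlety worth watching is verifying that at the supremum the extracted $\theta^\ast$ indeed lies in $[0,1]$ (so the $\theta \in [0,1]$ constraint is not active and does not further restrict $\kappa$); this follows immediately from $\kappa > 1$, so there is no real obstacle beyond careful algebraic rearrangement.
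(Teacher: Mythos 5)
Your proposal is correct and follows essentially the same route as the paper: normalize by $x=\gamma/\beta$, combine the two constraints into a single cubic feasibility condition $x^3+x^2-2x-1\le 0$, identify $\kappa$ as the positive root, and read off $\theta^\ast=1-1/\kappa^2$ at the point where the bounds on $\theta$ coincide. If anything, your explicit check that the $\theta\in[0,1]$ constraint is inactive at the extremal point is slightly more careful than the paper's argument.
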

\begin{proof}
Observe that the constraints on $\theta$ and $\gamma$ are equivalent to following inequalities:
\begin{align*}
1+ \frac{2\gamma}{\beta}  - \frac{\gamma^2}{\beta^2} - \frac{\gamma^3}{\beta^3}  \geq (\theta - 1)\frac{\gamma^2}{\beta^2} +1 \geq 0. \numberthis\label{eq:thetarearrange}
\end{align*}
The left hand side of Equation~\eqref{eq:thetarearrange} is monotonically decreasing in $\gamma$ for all $\gamma \geq \beta$.  Furthermore, if $\gamma = \kappa \beta$, then the left hand side is $0$. Thus, $\gamma^\ast \leq \kappa \beta$. Finally, for every $\gamma \in [0, \kappa \beta]$, the scalar $\theta_\gamma = 1 - \beta^2/\gamma^2$ satisfies $(\theta - 1)(\gamma^2/\beta^2) +1 \geq 0$.  Therefore, $(\gamma^\ast, \theta^\ast) = (\kappa\beta, 1 - 1/\kappa^2)$.
\qed\end{proof}

\begin{remark}\label{rem:kappa}
Throughout the rest of the paper, we will let $\kappa = 1/\sqrt{1-\theta^\ast} \approx  1.24698$ where $\theta^\ast$ is defined in Lemma~\ref{lem:howtochoosea}.  Note that the inequality constraints in Equation~\eqref{eq:kappa} become equalities for the pair $(\gamma^\ast, \theta^\ast)$.
\end{remark}

We will need the following bound in several of the proofs below.
\begin{prop}[Gradient sum bound]
For all $\gamma > 0$
\begin{align}\label{eq:Lipschitzgradientsum}
\sum_{i=0}^\infty \|\nabla g(x_g^i) - \nabla g(x_g^{i+1})\|^2 & \leq \frac{1}{\gamma^2 + \beta^2}\|z^0 - z^\ast\|^2.
\end{align}
\end{prop}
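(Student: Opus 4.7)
The strategy is to combine a pointwise contraction for the gradient increments with the standard summability of the fixed-point residual in the $\lambda_k \equiv 1/2$ regime of this section. First, I would apply Lemma~\ref{lem:extracontraction} to each consecutive pair of iterates $(z^i, z^{i+1})$, with $x^+ = x_g^i = \prox_{\gamma g}(z^i)$ and $y^+ = x_g^{i+1} = \prox_{\gamma g}(z^{i+1})$. This gives the pointwise bound
\begin{equation*}
\|\nabla g(x_g^i) - \nabla g(x_g^{i+1})\|^2 \leq \frac{1}{\gamma^2 + \beta^2}\|z^i - z^{i+1}\|^2,
\end{equation*}
which is exactly the one-step version of the inequality we want.

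Next, I would sum this pointwise bound over $i \geq 0$ and invoke Part~5 of Fact~\ref{fact:averagedconvergence}. Since this section operates under the standing assumption $\lambda_k \equiv 1/2$, we have $\tau_k = \lambda_k(1-\lambda_k) = 1/4$ and $\|z^{i+1} - z^i\|^2 = \lambda_k^2\|\TPRS z^i - z^i\|^2 = (1/4)\|\TPRS z^i - z^i\|^2$. Hence
\begin{equation*}
\sum_{i=0}^\infty \|z^{i+1} - z^i\|^2 = \sum_{i=0}^\infty \tau_i \|\TPRS z^i - z^i\|^2 \leq \|z^0 - z^\ast\|^2.
\end{equation*}
Combining the two displays yields the claimed bound.

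There is no real obstacle here; the only thing to double-check is the bookkeeping of constants that arise from the $\lambda_k \equiv 1/2$ identification between $\|z^{i+1}-z^i\|^2$ and the FPR $\|\TPRS z^i - z^i\|^2$, so that the sum telescopes into the clean constant $\|z^0-z^\ast\|^2$ rather than a multiple thereof. Once that is verified, the result is immediate.
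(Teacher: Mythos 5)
Your proposal is correct and follows essentially the same route as the paper: both proofs open with Lemma~\ref{lem:extracontraction} applied to consecutive iterates, and then bound $\sum_{i}\|z^{i}-z^{i+1}\|^2$ by $\|z^0-z^\ast\|^2$. The only cosmetic difference is that the paper telescopes the Fej\'er-type inequality \eqref{eq:fejer} term by term, whereas you cite the already-summed FPR bound in Part~5 of Fact~\ref{fact:averagedconvergence} together with the identification $\lambda_i^2=\tau_i=1/4$; these are the same estimate packaged differently, and your constant bookkeeping checks out.
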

\begin{proof}
From Lemma~\ref{lem:extracontraction} and the Fej\'er type in equality in Equation~\eqref{eq:fejer}:
\begin{align*}
\|\nabla g(x_g^k) - \nabla g(x_g^{k+1})\|^2 &\leq ({1}/({\gamma^2 + \beta^2}))\|z^k - z^{k+1}\|^2 \\
&\leq  ({1}/({\gamma^2 + \beta^2}))\left(\|z^k - z^\ast\|^2 - \|z^{k+1} - z^\ast\|^2\right). \numberthis\label{prop:lipschitzsum:eq:finalalt}
\end{align*}
Therefore, the result follows by summing~\eqref{prop:lipschitzsum:eq:finalalt}.
\qed\end{proof}

The following proposition computes an upper bound of the sum of the sequence in Equation~\eqref{prop:lipschitzmono:eq:main}.
\begin{proposition}[Summability]\label{prop:lipschitzsum}
If $\gamma  < \kappa \beta$, choose $\theta = \theta^\ast$ as in Lemma~\ref{lem:howtochoosea}; otherwise, set $\theta = 1$.  Then
\begin{align*}
\sum_{i=0}^\infty&\left( 2\gamma(f(x_f^k) + g(x_f^k) - f(x^\ast) - g(x^\ast)) + \theta\gamma^2 \| \nabla g(x_g^{k+1}) - \nabla g(x_g^k)\|^2 +\frac{(1-\theta)\gamma^2}{\beta^2}\|x_g^{k+1} - x_g^{k} \|^2\right) \\
&\leq \begin{cases}
\|x_g^0 - x^\ast\|^2, & \mbox{if $\gamma  < \kappa \beta$};\\
\|x_g^0 - x^\ast\|^2 + \frac{1}{\beta^2+ \gamma^2}\left( \frac{\gamma^3}{\beta} - 2\gamma \beta + \gamma^2 - \beta^2\right) \|z^0 - z^\ast\|^2, & \mbox{otherwise.}
\end{cases}
 \numberthis \label{prop:lipschitzsum:eq:main}
\end{align*}
\end{proposition}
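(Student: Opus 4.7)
The plan is to start from the fundamental inequality of Proposition~\ref{prop:fundamentaldiff} applied at the particular point $x = x^\ast$. That inequality reads
\begin{align*}
2\gamma(f(x_f^k)+g(x_f^k)-f(x^\ast)-g(x^\ast)) + \left(2\gamma\beta-\tfrac{\gamma^3}{\beta}\right)\|\nabla g(x_g^{k+1})-\nabla g(x_g^k)\|^2 + \|x_g^{k+1}-x_g^k\|^2 \leq \|x_g^k-x^\ast\|^2 - \|x_g^{k+1}-x^\ast\|^2,
\end{align*}
after moving $\|x_g^{k+1}-x^\ast\|^2$ to the right-hand side. Summing over $k \geq 0$ the right side telescopes, yielding the master inequality
\begin{align*}
\sum_{i=0}^\infty\Big[2\gamma(f(x_f^i)+g(x_f^i)-f(x^\ast)-g(x^\ast)) + (2\gamma\beta-\tfrac{\gamma^3}{\beta})\|\nabla g(x_g^{i+1})-\nabla g(x_g^i)\|^2 + \|x_g^{i+1}-x_g^i\|^2\Big] \leq \|x_g^0-x^\ast\|^2. \tag{$\star$}
\end{align*}

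For the first case $\gamma < \kappa\beta$, I take $\theta = \theta^\ast = 1-1/\kappa^2$. By the defining cubic $\kappa^3+\kappa^2-2\kappa-1=0$, the two inequalities $\theta^\ast\gamma^2 \leq 2\gamma\beta - \gamma^3/\beta$ and $(1-\theta^\ast)\gamma^2/\beta^2 \leq 1$ of Lemma~\ref{lem:howtochoosea} become equalities at $\gamma = \kappa\beta$ and hold strictly for $\gamma < \kappa\beta$ (the relevant quadratic in $\gamma$ factors through the root $\kappa\beta$ with the opposite root negative). Replacing $(2\gamma\beta-\gamma^3/\beta)\|\Delta\nabla g\|^2$ by the smaller $\theta^\ast\gamma^2\|\Delta\nabla g\|^2$ and $\|\Delta x_g\|^2$ by the smaller $(1-\theta^\ast)\gamma^2\beta^{-2}\|\Delta x_g\|^2$ in ($\star$) produces the desired $\|x_g^0-x^\ast\|^2$ bound.

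For the second case $\gamma \geq \kappa\beta$ I take $\theta = 1$, so the $\|\Delta x_g\|^2$ contribution we must control vanishes and the goal is to bound $\sum_i \gamma^2\|\nabla g(x_g^{i+1})-\nabla g(x_g^i)\|^2$. Rearranging ($\star$) gives
\begin{align*}
\sum_i\left[2\gamma(\cdots) + \gamma^2\|\Delta\nabla g\|^2\right] \leq \|x_g^0-x^\ast\|^2 + \sum_i\left(\tfrac{\gamma^3}{\beta}+\gamma^2-2\gamma\beta\right)\|\Delta\nabla g\|^2 - \sum_i\|\Delta x_g\|^2.
\end{align*}
Using the Lipschitz estimate $\|\Delta x_g\|^2 \geq \beta^2\|\Delta\nabla g\|^2$ turns the last two sums into $(\gamma^3/\beta + \gamma^2 - 2\gamma\beta - \beta^2)\sum_i\|\Delta\nabla g\|^2$, where the coefficient is nonnegative for $\gamma \geq \kappa\beta$ (it vanishes at $\kappa\beta$ by the defining equation for $\kappa$ and is increasing in $\gamma$). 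The gradient sum bound~\eqref{eq:Lipschitzgradientsum}, $\sum_i\|\Delta\nabla g\|^2 \leq (\gamma^2+\beta^2)^{-1}\|z^0-z^\ast\|^2$, then yields exactly the stated bound.

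The main obstacle is simply the bookkeeping verification that the pair $(\gamma,\theta^\ast)$ satisfies both of the Lemma~\ref{lem:howtochoosea} constraints throughout $(0,\kappa\beta)$, and that the residual coefficient $\gamma^3/\beta + \gamma^2 - 2\gamma\beta - \beta^2$ does not turn negative for $\gamma \geq \kappa\beta$. Both facts reduce to locating the positive real root of $\kappa^3+\kappa^2-2\kappa-1$ and comparing signs of the relevant cubics on either side of it, so they are algebraic and routine once the setup above is in place.
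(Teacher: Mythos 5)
Your proof is correct and follows essentially the same route as the paper's: both start from the fundamental inequality of Proposition~\ref{prop:fundamentaldiff} at $x = x^\ast$, telescope, convert $\|x_g^{k+1}-x_g^k\|^2$ into $\beta^2\|\nabla g(x_g^{k+1})-\nabla g(x_g^k)\|^2$ via Lipschitz continuity, and invoke the gradient sum bound~\eqref{eq:Lipschitzgradientsum} in the large-$\gamma$ case. The only cosmetic differences are that you sum before substituting (the paper manipulates per iteration and then sums) and that in the first case you verify the two constraints of Lemma~\ref{lem:howtochoosea} separately rather than reducing to the single sign condition on $\gamma^3/\beta - 2\gamma\beta + \gamma^2 - \beta^2$.
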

\begin{proof}
First note that:
$$-\|x_g^k - x_{g}^{k+1}\|^2 \leq -\beta^2 \|\nabla g(x_g^{k}) - \nabla g(x_g^{k+1})\|^2.$$
In addition, for either choice of $\theta$ we have $ (1-\theta)\gamma^2/\beta^2- 1 \leq 0$.
Thus, from Equation~\eqref{prop:fundamentaldiff:eq:main}
\begin{align*}
2\gamma&(f(x_f^k) + g(x_f^k) - f(x^\ast) - g(x^\ast)) + \theta\gamma^2 \| \nabla g(x_g^{k+1}) - \nabla g(x_g^k)\|^2 +\frac{(1-\theta)\gamma^2}{\beta^2}\|x_g^{k+1} - x_g^{k} \|^2 \\
&\leq \|x_g^k - x^\ast\| - \|x_g^{k+1} - x^\ast\|^2 \\
&+ \left(\frac{\gamma^3}{\beta} - 2\gamma \beta + \theta\gamma^2\right)\|\nabla g(x_g^{k+1}) - \nabla g(x_g^k)\|^2 + \left(\frac{(1-\theta)\gamma^2}{\beta^2}- 1\right)\|x_g^k - x_{g}^{k+1}\|^2 \numberthis \label{prop:lipschitzsum:eq:alternating}\\
&\leq \|x_g^k - x^\ast\| - \|x_g^{k+1} - x^\ast\|^2 + \left(\frac{\gamma^3}{\beta} - 2\gamma \beta + \gamma^2 - \beta^2\right)\|\nabla g(x_g^{k+1}) - \nabla g(x_g^k)\|^2.
\end{align*}
The last line of Equation~\eqref{prop:lipschitzsum:eq:alternating} is negative if, and only if, $\gamma \leq \kappa \beta$. This proves the first bound in Equation~\eqref{prop:lipschitzsum:eq:main}.  The second bound follows from the sum bound in Equation~\eqref{eq:Lipschitzgradientsum}.
\qed\end{proof}

\section{Proofs from Section~\ref{sec:feasibilityniceintersection}}\label{app:feasibilityniceintersection}

In this section, we will vary the implicit stepsize parameter in every iteration. In addition $f$ and $g$ will have separate implicit stepsize parameters. Thus, we augment the $\TPRS$ notation as follows: for all $\gamma_f, \gamma_g > 0$,
\begin{align*}
\TPRS^{\gamma_f, \gamma_g} &:= \refl_{\gamma_f f} \circ \refl_{\gamma_g g}.
\end{align*}

The following optimality conditions are well known. They will be needed in Section~\ref{sec:feasibilityniceintersection} because we vary the implicit stepsize parameter $\gamma$.  See \cite{davis2014convergence,bauschke2011convex} for a proof.

\begin{lemma}[Optimality conditions of $\TPRS $]\label{lem:PRSoptimality}
The set of zeros of $\partial f+ \partial g$ is precisely
\begin{align}
\zer(\partial f + \partial g) &= \{ \prox_{\gamma g}(z) \mid z \in \cH, \TPRS  z = z\}.\numberthis \label{eq:setoptimalitydrs}
\end{align}
That is, if $z^\ast$ is a fixed point of $\TPRS$, then $x^\ast = x_g^\ast = x_f^\ast$ is a solution to Problem~\ref{eq:simplesplit}, and
\begin{align}\label{eq:gradoptimality}
z^\ast - x^\ast = \gamma \tnabla g(x^\ast) \in \gamma \partial g(x^\ast).
\end{align}
Therefore, the set of fixed points of $\TPRS$ is exactly
\begin{align*}
\left\{ x + \gamma w\mid x \in \zer \left( \partial f + \partial g\right),  w\in (-\partial f(x)) \cap \partial g(x)\right\}.
\end{align*}
\end{lemma}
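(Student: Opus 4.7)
The plan is to unwind the definition of $\TPRS$ using the reflection identity and then invoke the proximal optimality condition from Part~\ref{prop:basicprox:part:optprox} of Proposition~\ref{prop:basicprox}. Writing $x_g = \prox_{\gamma g}(z)$ and $x_f = \prox_{\gamma f}(\refl_{\gamma g}(z))$, the definitions of the reflection operators immediately give $\refl_{\gamma g}(z) = 2x_g - z$ and
\begin{align*}
\TPRS z = \refl_{\gamma f}(\refl_{\gamma g}(z)) = 2x_f - (2x_g - z) = z + 2(x_f - x_g).
\end{align*}
Hence $\TPRS z = z$ if and only if $x_f = x_g$. This reduces the whole lemma to translating the equality $x_f = x_g$ into a subdifferential statement.

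Now I would apply the proximal optimality condition twice. First, $x_g = \prox_{\gamma g}(z)$ is equivalent to $\tnabla g(x_g) := \gamma^{-1}(z - x_g) \in \partial g(x_g)$. Second, $x_f = \prox_{\gamma f}(2x_g - z)$ is equivalent to $\gamma^{-1}((2x_g - z) - x_f) \in \partial f(x_f)$. Under the fixed-point condition $x_f = x_g =: x^\ast$, the second inclusion simplifies to $\gamma^{-1}(x^\ast - z) \in \partial f(x^\ast)$, i.e.\ $-\tnabla g(x^\ast) \in \partial f(x^\ast)$. Adding the two inclusions yields $0 \in \partial f(x^\ast) + \partial g(x^\ast)$, so $x^\ast \in \zer(\partial f + \partial g)$, and simultaneously $z - x^\ast = \gamma \tnabla g(x^\ast)$, which proves statement~\eqref{eq:gradoptimality} and the forward inclusion of~\eqref{eq:setoptimalitydrs}.

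For the converse direction, given any $x^\ast \in \zer(\partial f + \partial g)$, pick $w \in \partial g(x^\ast) \cap (-\partial f(x^\ast))$ and set $z := x^\ast + \gamma w$. Applying Part~\ref{prop:basicprox:part:optprox} of Proposition~\ref{prop:basicprox} in the reverse direction, $w \in \partial g(x^\ast)$ with $z - x^\ast = \gamma w$ gives $\prox_{\gamma g}(z) = x^\ast$; similarly, $2x^\ast - z = x^\ast - \gamma w$ and $-w \in \partial f(x^\ast)$ yield $\prox_{\gamma f}(2x^\ast - z) = x^\ast$. Therefore $x_g = x_f = x^\ast$ and the identity $\TPRS z = z + 2(x_f - x_g)$ gives $\TPRS z = z$. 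This establishes the reverse inclusion in~\eqref{eq:setoptimalitydrs} and, by tracking the pair $(x^\ast, w)$, also gives the explicit parametrization of $\Fix(\TPRS)$ in the last displayed formula.

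I expect no substantive obstacle: the argument is a bookkeeping exercise that leverages exactly one nontrivial ingredient, namely the equivalence between the proximal identity and the subgradient inclusion. The only care required is to track signs correctly when computing $2x_g - z$ in the second optimality condition, since that is what produces the minus sign converting $\tnabla g(x^\ast) \in \partial g(x^\ast)$ into $-\tnabla g(x^\ast) \in \partial f(x^\ast)$, thereby forcing $0 \in \partial f(x^\ast) + \partial g(x^\ast)$.
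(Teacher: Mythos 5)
Your proof is correct and complete; note that the paper itself does not prove this lemma but instead cites \cite{davis2014convergence,bauschke2011convex}, and the argument in those references is exactly the one you give: reduce $\TPRS z = z$ to $x_f = x_g$ via the identity $\TPRS z = z + 2(x_f - x_g)$, then translate both proximal steps into subdifferential inclusions using Part~\ref{prop:basicprox:part:optprox} of Proposition~\ref{prop:basicprox}. The sign bookkeeping you flag is handled correctly, and the converse construction $z = x^\ast + \gamma w$ yields the stated parametrization of $\Fix(\TPRS)$.
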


The following propositions study the behavior of $\TPRS^{\gamma_f, \gamma_g}$ as the positive implicit stepsize parameters $\gamma_f $ and $\gamma_g$ vary.
\begin{lemma}[Non expansiveness of PRS operator]
The operator $\TPRS^{\gamma_f, \gamma_g}$ is nonexpansive.
\end{lemma}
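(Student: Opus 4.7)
The plan is to reduce the statement immediately to Part~\ref{prop:basicprox:part:nonexpansive} of Proposition~\ref{prop:basicprox}, which already asserts nonexpansiveness of each reflection operator and their composition; the two-parameter version $\TPRS^{\gamma_f,\gamma_g}$ differs from the single-parameter $\TPRS$ only in allowing distinct stepsizes for $f$ and $g$, and the nonexpansiveness argument is insensitive to this distinction.

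Concretely, I would first invoke Part~\ref{prop:basicprox:part:nonexpansive} (or directly Part~\ref{cor:proxcontraction}) applied to $\gamma_g g$ to conclude that $\refl_{\gamma_g g}=2\prox_{\gamma_g g}-I_{\cH}$ is nonexpansive: the inequality in \eqref{cor:proxcontraction:eq:main} gives
\begin{align*}
\|\refl_{\gamma_g g}(x)-\refl_{\gamma_g g}(y)\|^2 &= \|(x-y) - 2((x-\prox_{\gamma_g g}(x))-(y-\prox_{\gamma_g g}(y)))\|^2 + 4\bigl(\|\prox_{\gamma_g g}(x)-\prox_{\gamma_g g}(y)\|^2\\
&\qquad -\|x-y\|^2 + \|(x-\prox_{\gamma_g g}(x))-(y-\prox_{\gamma_g g}(y))\|^2\bigr)\\
&\le \|x-y\|^2,
\end{align*}
after expanding and cancelling. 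The same argument applied with $\gamma_f f$ in place of $\gamma_g g$ shows that $\refl_{\gamma_f f}$ is nonexpansive.

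Finally, I would conclude by composition: for any $x,y\in\cH$,
\begin{equation*}
\|\TPRS^{\gamma_f,\gamma_g}(x)-\TPRS^{\gamma_f,\gamma_g}(y)\|=\|\refl_{\gamma_f f}(\refl_{\gamma_g g}(x))-\refl_{\gamma_f f}(\refl_{\gamma_g g}(y))\|\le\|\refl_{\gamma_g g}(x)-\refl_{\gamma_g g}(y)\|\le\|x-y\|.
\end{equation*}
There is no real obstacle here; the only thing to check is that Proposition~\ref{prop:basicprox} genuinely applies with separate parameters, which it does, since its proof treats $\gamma$ as an arbitrary positive scalar. In fact one could state this lemma as a one-line corollary of Proposition~\ref{prop:basicprox}\ref{prop:basicprox:part:nonexpansive}.
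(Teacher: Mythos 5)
Your proposal is correct and takes essentially the same route as the paper, which also dispatches the lemma as an immediate consequence of Part~\ref{prop:basicprox:part:nonexpansive} of Proposition~\ref{prop:basicprox} (nonexpansiveness of each reflection operator, hence of their composition). The extra computation you include to verify nonexpansiveness of $\refl_{\gamma_g g}$ from the averagedness inequality is fine but not needed, since the paper already asserts that fact for arbitrary positive stepsize.
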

\begin{proof}
This is an immediate consequence of the nonexpansiveness of the reflection mapping (See Part~\ref{prop:basicprox:part:nonexpansive} of Proposition~\ref{prop:basicprox}).
\qed\end{proof}

The following lemma will be useful for determining the fixed point set of $\TPRS^{\gamma_f, \gamma_g}$.
\begin{lemma}[Minimizers of weighted squared distance]\label{lem:differentgammadist}
Let  $\rho_1, \rho_2 > 0$, and suppose that $C_f \cap C_g \neq \emptyset$. Then the set of minimizers of $\rho_1 d^2_{C_f} + \rho_2 d^2_{C_g}$ is $C_f \cap C_g$.
\end{lemma}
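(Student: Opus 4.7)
The plan is to exploit the elementary fact that the squared distance function $d_C^2$ is nonnegative everywhere and vanishes exactly on $C$ (for $C$ closed and convex, this is standard; see \cite[Chapter 3]{bauschke2011convex}). Since $\rho_1,\rho_2>0$, the weighted sum $\rho_1 d_{C_f}^2 + \rho_2 d_{C_g}^2$ is also nonnegative, so its infimum over $\cH$ is at least $0$.

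Next I would use the hypothesis $C_f \cap C_g \neq \emptyset$ to pick any $x_0 \in C_f \cap C_g$. Then $d_{C_f}(x_0) = d_{C_g}(x_0) = 0$, so $\rho_1 d_{C_f}^2(x_0) + \rho_2 d_{C_g}^2(x_0) = 0$. This shows the infimum is attained and equals $0$, so the minimum value is $0$.

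Finally, for the characterization of the minimizer set, observe that $\rho_1 d_{C_f}^2(x) + \rho_2 d_{C_g}^2(x) = 0$ forces both $d_{C_f}^2(x) = 0$ and $d_{C_g}^2(x) = 0$ (since $\rho_1,\rho_2 > 0$ and each term is nonnegative), which in turn is equivalent to $x \in C_f$ and $x \in C_g$, i.e., $x \in C_f \cap C_g$. Combined with the previous step showing every element of $C_f \cap C_g$ achieves the minimum value $0$, we get equality of sets.

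There is no real obstacle here; the result is essentially a one-line consequence of positivity and the zero set of the distance function. The only subtlety worth flagging is that closedness and convexity of $C_f$ and $C_g$ (standing assumptions in the section) are what guarantee $d_C^2(x) = 0 \iff x \in C$, so I would cite this fact rather than reprove it.
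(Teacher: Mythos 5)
Your proof is correct and is essentially the same argument as the paper's (the paper's own proof is the one-line observation that the minimum value $0$ is attained exactly on $C_f \cap C_g$ and the sum is strictly positive elsewhere). Your version just spells out the nonnegativity, attainment, and zero-forcing steps explicitly, which is fine.
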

\begin{proof}
The minimal value is attained whenever $x \in C_f \cap C_g$; otherwise, the sum is nonzero.
\qed\end{proof}

We will now compute the fixed points of $\TPRS^{\gamma_f, \gamma_g}$.
\begin{proposition}[Fixed points of PRS operator]
The set of fixed points of $\TPRS^{\gamma_f, \gamma_g}$ is $C_f \cap C_g$.
\end{proposition}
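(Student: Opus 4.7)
The plan is to establish the two inclusions separately, using Lemma~\ref{lem:differentgammadist} to pass from the optimality of a certain sum to membership in $C_f \cap C_g$.

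For the inclusion $C_f \cap C_g \subseteq \Fix(\TPRS^{\gamma_f,\gamma_g})$, I would start with any $z \in C_f \cap C_g$ and compute directly. Since $P_{C_g}(z) = z$, the proximal formula in Proposition~\ref{prop:factsaboutdistancesquared} yields $\prox_{\gamma_g d_{C_g}^2}(z) = z$, hence $\refl_{\gamma_g g}(z) = z$. By the same argument using $P_{C_f}(z) = z$, we get $\refl_{\gamma_f f}(z) = z$. Composing gives $\TPRS^{\gamma_f,\gamma_g}(z) = z$.

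For the reverse inclusion, I would start from a fixed point $z$ and set $x := \prox_{\gamma_g g}(z)$. The definition of the proximal operator (Part~\ref{prop:basicprox:part:optprox} of Proposition~\ref{prop:basicprox}, applied to the differentiable $g = d_{C_g}^2$) gives
\begin{equation*}
z - x = \gamma_g \nabla g(x),
\end{equation*}
and since $z = \TPRS^{\gamma_f,\gamma_g}(z)$ forces $\prox_{\gamma_f f}(2x-z) = x$, the same optimality condition gives
\begin{equation*}
(2x - z) - x = x - z = \gamma_f \nabla f(x).
\end{equation*}
Adding the two identities yields $\gamma_f \nabla f(x) + \gamma_g \nabla g(x) = 0$, so $x$ is a critical point of the convex function $\gamma_f d_{C_f}^2 + \gamma_g d_{C_g}^2$, hence a minimizer. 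By Lemma~\ref{lem:differentgammadist}, this forces $x \in C_f \cap C_g$. At such $x$, both $\nabla f(x) = 2(x - P_{C_f}(x)) = 0$ and $\nabla g(x) = 0$, so the first identity collapses to $z = x \in C_f \cap C_g$.

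The argument is essentially mechanical once one exploits the everywhere-differentiability of the squared distance functions, which sidesteps any subdifferential sum-rule subtleties; the only ``idea'' is to recognize that summing the two prox-optimality conditions cancels the asymmetry created by having separate stepsizes $\gamma_f$ and $\gamma_g$, reducing the problem to the minimization characterization supplied by Lemma~\ref{lem:differentgammadist}. No obstacle of real substance arises, but the one place to be careful is keeping track of signs so that the cancellation $\gamma_f\nabla f(x)+\gamma_g\nabla g(x)=0$ is correctly obtained (rather than, e.g., a spurious difference that would not identify $x$ with a minimizer).
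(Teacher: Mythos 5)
Your proof is correct and follows essentially the same route as the paper: the paper simply cites Lemma~\ref{lem:PRSoptimality} (whose content is exactly the prox-optimality computation you carry out by hand, after absorbing the stepsizes via $f'=\gamma_f f$, $g'=\gamma_g g$) together with Lemma~\ref{lem:differentgammadist}, and then observes that $\nabla g'(x)=2\gamma_g(x-P_{C_g}(x))=0$ on $C_f\cap C_g$ so that the fixed point $z=x+\nabla g'(x)$ collapses to $x$. Your explicit verification of the easy inclusion and your sign-tracking to get $\gamma_f\nabla f(x)+\gamma_g\nabla g(x)=0$ are both sound.
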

\begin{proof}
Let $f' = \gamma_f f$ and let $g' = \gamma_g g$.  Then Lemma~\ref{lem:PRSoptimality} combined with Lemma~\ref{lem:differentgammadist} show that the set of fixed points of $\TPRS^{\gamma_f, \gamma_g} = \refl_{f'} \circ \refl_{g'}$ is
\begin{align*}
\{x + \gamma \nabla g'(x) \mid x \in C_f \cap C_g, \nabla g'(x) = -\nabla f'(x)\}.
\end{align*}
However, $\nabla g'(x) = 2\gamma_g(x - P_{C_g}(x))= 0$ for all $x \in C_f \cap C_g$, and so the identity holds.
\qed\end{proof}

Now, we will show that the sequence generated by Equation~\eqref{eq:DRSfeasibilitygamma} is bounded.
\begin{proposition}[Boundedness]
Suppose that $(z^k)$ is generated by the iteration in Equation~\eqref{eq:DRSfeasibilitygamma}. If $(\lambda_k)_{k \geq 0} \subseteq (0, 1]$, then $(\|z^j - x\|^2)_{j \geq 0}$ is monotonically nonincreasing for any $x \in C_f \cap C_g$.
\end{proposition}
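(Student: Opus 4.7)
The proof will be a direct application of the two preceding propositions (fixed point characterization and nonexpansiveness of $\TPRS^{\gamma_f,\gamma_g}$) combined with convexity of the squared norm. The key conceptual point is that, although the implicit stepsize parameters $\gamma_{f,k}$ and $\gamma_{g,k}$ vary with $k$, each iteration can still be written as a convex combination of $z^k$ with $\TPRS^{\gamma_{f,k},\gamma_{g,k}} z^k$, and any $x \in C_f \cap C_g$ is a common fixed point of \emph{all} the operators $\TPRS^{\gamma_f,\gamma_g}$ arising for varying stepsizes.

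First I would verify the identity
\begin{align*}
\TPRS^{\gamma_{f,k},\gamma_{g,k}}(z^k) \;=\; z^k + 2(x_f^k - x_g^k),
\end{align*}
which is a short computation: $\refl_{\gamma_{g,k}\, d_{C_g}^2}(z^k) = 2x_g^k - z^k$, and then $\refl_{\gamma_{f,k}\, d_{C_f}^2}(2x_g^k - z^k) = 2x_f^k - (2x_g^k - z^k) = z^k + 2(x_f^k - x_g^k)$. Consequently the iteration in \eqref{eq:DRSfeasibilitygamma} reads
\begin{align*}
z^{k+1} \;=\; (1-\lambda_k) z^k + \lambda_k\, \TPRS^{\gamma_{f,k},\gamma_{g,k}}(z^k).
\end{align*}

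Next, fix any $x \in C_f \cap C_g$. By the previous proposition, $x$ is a fixed point of $\TPRS^{\gamma_{f,k},\gamma_{g,k}}$ for every $k$, and by the nonexpansiveness lemma above, this operator is nonexpansive. Hence $\|\TPRS^{\gamma_{f,k},\gamma_{g,k}}(z^k) - x\| \le \|z^k - x\|$. Since $\lambda_k \in (0,1]$, convexity of $\|\cdot - x\|^2$ gives
\begin{align*}
\|z^{k+1} - x\|^2
&\leq (1-\lambda_k)\|z^k - x\|^2 + \lambda_k\, \|\TPRS^{\gamma_{f,k},\gamma_{g,k}}(z^k) - x\|^2 \\
&\leq (1-\lambda_k)\|z^k - x\|^2 + \lambda_k\, \|z^k - x\|^2 \;=\; \|z^k - x\|^2,
\end{align*}
which establishes monotone nonincrease.

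There is no real obstacle here; the only subtlety is that one cannot directly quote the Fej\'er-type inequality \eqref{eq:fejer} from Fact~\ref{fact:averagedconvergence}, because that statement is phrased for a \emph{fixed} operator $\TPRS$, whereas here the operator changes at each step as $\gamma_{f,k}, \gamma_{g,k}$ vary. The workaround is exactly the step above: at each iteration individually, $x$ is a fixed point of the operator being applied, and nonexpansiveness plus convex combination suffice.
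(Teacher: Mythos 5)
Your proof is correct and follows essentially the same route as the paper: the paper's one-line argument observes that the fixed-point set of $\TPRS^{\gamma_f,\gamma_g}$ is $C_f\cap C_g$ independently of the stepsizes and then invokes the Fej\'er-type inequality~\eqref{eq:fejer} at each iteration, which is exactly the per-step fixed-point-plus-nonexpansiveness argument you spell out. Your substitution of the convexity bound $\|z^{k+1}-x\|^2\le(1-\lambda_k)\|z^k-x\|^2+\lambda_k\|\TPRS^{\gamma_{f,k},\gamma_{g,k}}(z^k)-x\|^2$ for the direct citation of~\eqref{eq:fejer} is a presentational difference only (and your remark about the operator changing each step is precisely the point the paper's ``does not depend on $\gamma_f$ and $\gamma_g$'' clause is addressing).
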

\begin{proof}
Because the set of fixed points of $\TPRS^{\gamma_f, \gamma_g}$ does not depend on $\gamma_f$ and $\gamma_g$, the claim follows directly from the Fej\'er-type inequality in Equation~\eqref{eq:fejer}.
\qed\end{proof}

We restate the fundamental inequality here for the readers convenience.

\begin{proposition}[Upper fundamental inequality for feasibility problem]
Suppose that $z\in \cH$ and $z^+ = (\TPRS^{\gamma_f, \gamma_g})_{\lambda}(z)$. Then for all $x^\ast\in C_f\cap C_g$,
\begin{align}
8\lambda(\gamma_{f} d^2_{C_f}(x_f) + \gamma_{g}d^2_{C_g}(x_g))  &\leq \|z -  x^\ast\|^2 -  \|z^{+} - x^\ast\|^2 + \left(1 - \frac{1}{\lambda} \right)\|z^{+} - z\|^2.
\end{align}
\end{proposition}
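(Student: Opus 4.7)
The plan is to derive the stated inequality as a direct specialization of Proposition~\ref{prop:DRSupper} rather than redo its proof. Since the feasibility iteration $\TPRS^{\gamma_f,\gamma_g}$ uses distinct stepsize parameters for $f$ and $g$, whereas Proposition~\ref{prop:DRSupper} is stated for a common $\gamma$, I would first absorb the stepsizes into the objectives by setting $f'=\gamma_f d_{C_f}^2$ and $g'=\gamma_g d_{C_g}^2$. Since $\prox_{1\cdot f'}=\prox_{\gamma_f d_{C_f}^2}$ and likewise for $g'$, the relaxed PRS step defining $z^+$ coincides with the relaxed PRS step for the pair $(f',g')$ at stepsize $\gamma=1$, so Proposition~\ref{prop:DRSupper} applies verbatim to the rescaled pair.

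Applying that proposition at $x=x^*\in C_f\cap C_g$ with $\gamma=1$ yields
\[
4\lambda\bigl(f'(x_f)+g'(x_g)-f'(x^*)-g'(x^*)+S_{f'}(x_f,x^*)+S_{g'}(x_g,x^*)\bigr)\le \|z-x^*\|^2-\|z^+-x^*\|^2+\Bigl(1-\tfrac{1}{\lambda}\Bigr)\|z^+-z\|^2.
\]
Next, using Proposition~\ref{prop:factsaboutdistancesquared}, since $x^*\in C_f\cap C_g$ we have $f'(x^*)=g'(x^*)=0$ and $\nabla f'(x^*)=\nabla g'(x^*)=0$. Moreover $\nabla f'=2\gamma_f(I_{\cH}-P_{C_f})$ is $2\gamma_f$-Lipschitz, so the Lipschitz constant appearing in $S_{f'}$ is $\beta_{f'}=1/(2\gamma_f)$, and analogously $\beta_{g'}=1/(2\gamma_g)$.

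Bounding $S_{f'}$ from below by its Lipschitz component and using $\|\nabla f'(x_f)\|^2=4\gamma_f^2\, d_{C_f}^2(x_f)$ yields
\[
S_{f'}(x_f,x^*)\ge \tfrac{\beta_{f'}}{2}\|\nabla f'(x_f)\|^2=\gamma_f d_{C_f}^2(x_f),
\]
which combined with $f'(x_f)=\gamma_f d_{C_f}^2(x_f)$ gives $f'(x_f)-f'(x^*)+S_{f'}(x_f,x^*)\ge 2\gamma_f d_{C_f}^2(x_f)$, with the parallel bound $g'(x_g)-g'(x^*)+S_{g'}(x_g,x^*)\ge 2\gamma_g d_{C_g}^2(x_g)$. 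Substituting these two lower bounds into the displayed inequality produces
\[
8\lambda\bigl(\gamma_f d_{C_f}^2(x_f)+\gamma_g d_{C_g}^2(x_g)\bigr)\le \|z-x^*\|^2-\|z^+-x^*\|^2+\Bigl(1-\tfrac{1}{\lambda}\Bigr)\|z^+-z\|^2,
\]
which is exactly~\eqref{eq:DRSupperfeas:eq:main}. The only mildly subtle point, and essentially the only thing to verify carefully, is that the rescaling $f\mapsto \gamma_f f$ transforms the constant $\beta_f$ into $\beta_f/\gamma_f$ (immediate from the definition, since $\nabla(\gamma_f f)=\gamma_f\nabla f$); this makes the factors of $\gamma_f$ and $\gamma_g$ emerge in exactly the right places. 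No further estimates beyond those already assembled in the paper are needed, so I do not foresee any substantive obstacle.
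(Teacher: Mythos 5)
Your proposal is correct and follows essentially the same route as the paper: rescale to $f'=\gamma_f d_{C_f}^2$, $g'=\gamma_g d_{C_g}^2$, apply Proposition~\ref{prop:DRSupper} with $\gamma=1$ at $x=x^\ast$, and use $\beta_{f'}=1/(2\gamma_f)$, $\beta_{g'}=1/(2\gamma_g)$ to identify $S_{f'}(x_f,x^\ast)=\gamma_f d_{C_f}^2(x_f)$ and $S_{g'}(x_g,x^\ast)=\gamma_g d_{C_g}^2(x_g)$ (the paper computes these as equalities since $\mu_{f'}=\mu_{g'}=0$, whereas you only need the lower bound, which suffices). No gaps.
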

\begin{proof}[Proof of Proposition~\ref{prop:factsaboutdistancesquared}]
This follows directly from the upper fundamental inequality in Proposition~\ref{prop:DRSupper} (with $\mu_f = \mu_g = 0$, and $\gamma = 1$), applied to the functions $f' = \gamma_f f$ and $g' = \gamma_g g$.  Indeed,  the gradients $\gamma_{f}\nabla d_{C_f}^2$ and $\gamma_{g}\nabla d_{C_g}^2$ are $2\gamma_{f}$ and $2\gamma_{g}$-Lipschitz ($\beta_{f'} = {1}/({2\gamma_{f}})$ and $\beta_{g'} = {1}/({2\gamma_{g}})$). Furthermore, if $S_{g'}$ and $S_{f'}$ are defined as in Equation~\eqref{eq:snotation}, then
\begin{align}
S_{g'}(x_{g'}, x^\ast) = \frac{1}{4\gamma_g} \|\gamma_{g}\nabla d_{C_g}^2(x_g) - \gamma_{g}\nabla d_{C_g}^2(x^\ast)\|^2  =\frac{\gamma_{g}}{4}\|2(x_g - P_{C_g}(x_g))\|^2 = \gamma_{g}d_{C_g}^2(x_g),
\end{align}
and by the same argument, $S_{f'}(x_{f'}, x^\ast) = \gamma_{f} d_{C_f}^2(x_f)$.  To summarize, we have
\begin{align}
\gamma_{f} d^2_{C_f}(x_f) + \gamma_{g}d^2_{C_g}(x_g) + S_{f'}(x_{f'}, x^\ast) + S_{g'}(x_{g'}, x^\ast)  &= 2\gamma_{f}d^2_{C_f}(x_f) + 2\gamma_{g}d^2_{C_g}(x_g).
\end{align}
Therefore, the inequality follows because $d_{C_g}^2(x^\ast) = d_{C_f}^2(x^\ast) = 0$.
\qed\end{proof}

\section{Extension of results of Section~\ref{sec:feasibilityniceintersection} to multiple sets}~\label{app:feasibilitymultiplesets}


The concept of (bounded) linear regularity is defined for any finite number of sets. The following theorem shows that (bounded) linear regularity of a collection of sets is equivalent to the (bounded) linear regularity of a certain pair of sets in a product space.  For convenience we set
\begin{align}
D &:= \{(x, \cdots, x) \mid x \in \cH\} \subseteq \cH^m,
\end{align}
and endow $\cH^m$ with the canonical norm: $\|(x_1, \cdots, x_m)\|^2 = ({1}/{m})\sum_{i=1}^m \|x_i\|^2$. We will use the boldface notation $\vx \in \cH^m$ for an arbitrary vector in $\cH^m$. Finally, for any $\vx \in \cH^m$, we will write $\vx_{j}$ for the $j$th component of $\vx$, which is an element of $\cH$.
\begin{theorem}[(Bounded) linear regularity in product spaces] \label{thm:boundedlinearlyregularproduct}
Suppose that $C_1, \cdots, C_m$ are closed convex subsets of $\cH$ with nonempty intersection.  Then $\{C_1, \cdots, C_m\}$ is boundedly linearly regular or linearly regular, if, and only if, $\{C_1 \times \cdots \times C_m, D\}$ has the same property in $\cH^m$ with the canonical norm.  In particular, if $\{C_1, \cdots, C_m\}$ is $\mu_\rho$-(boundedly) linearly regular on the ball $B(0, \rho)$, then $\{C_1 \times \cdots \times C_n, D\}$ is $\sqrt{(1+4m\mu_\rho^2)}$-(boundedly) on the ball $B(\mathbf{0}, \rho)$, and
\begin{align}
d_{(C_1\times \cdots \times C_m) \cap D}(\vx) &\leq \sqrt{(1+ 4m\mu_\rho^2)}\max\{d_{C_1\times \cdots \times C_m}(\vx), d_{D}(\vx)\}.
\end{align}
\end{theorem}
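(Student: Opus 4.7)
The plan is to prove the two directions separately, with the forward direction (regularity of the collection implies regularity of the pair) carrying essentially all the technical content. Throughout, write $C := C_1 \times \cdots \times C_m$ and let $\bar{x} := (1/m)\sum_{i=1}^m \vx_i \in \cH$ denote the average of the components of $\vx \in \cH^m$; note that $P_{D}(\vx) = (\bar{x},\ldots,\bar{x})$, so $d_D(\vx)^2 = (1/m)\sum_i \|\vx_i - \bar{x}\|^2$, and $d_{C}(\vx)^2 = (1/m)\sum_i d_{C_i}(\vx_i)^2$. Also, $C \cap D = \{(y,\ldots,y) : y \in C_1 \cap \cdots \cap C_m\}$, so $d_{C \cap D}(\vx)^2 = \inf_{y \in \cap_i C_i} (1/m)\sum_i \|\vx_i - y\|^2$.

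For the forward direction, fix $\vx \in B(\vzero,\rho)$ in the canonical norm. By Jensen's inequality $\|\bar{x}\|^2 \le (1/m)\sum_i \|\vx_i\|^2 = \|\vx\|^2 \le \rho^2$, so $\bar{x} \in B(0,\rho) \subseteq \cH$. The first key step is the Pythagorean identity
\begin{align*}
\frac{1}{m}\sum_{i=1}^m \|\vx_i - y\|^2 = \frac{1}{m}\sum_{i=1}^m \|\vx_i - \bar{x}\|^2 + \|\bar{x} - y\|^2 \qquad \forall y \in \cH,
\end{align*}
which holds because the cross terms vanish ($\sum_i (\vx_i - \bar{x}) = 0$). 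Choosing $y = P_{C_1 \cap \cdots \cap C_m}(\bar{x})$ and infimizing over $y \in \cap_i C_i$ gives
\begin{align*}
d_{C \cap D}(\vx)^2 \le d_D(\vx)^2 + d_{C_1 \cap \cdots \cap C_m}(\bar{x})^2.
\end{align*}
The second step bounds the last term using the bounded linear regularity hypothesis on $\{C_1,\ldots,C_m\}$: since $\bar{x} \in B(0,\rho)$, we have $d_{C_1 \cap \cdots \cap C_m}(\bar{x})^2 \le \mu_\rho^2 \max_i d_{C_i}(\bar{x})^2$. The triangle inequality followed by $(a+b)^2 \le 2a^2 + 2b^2$ yields $d_{C_i}(\bar{x})^2 \le 2\|\vx_i - \bar{x}\|^2 + 2 d_{C_i}(\vx_i)^2$, and taking $\max_i$ and bounding $\max_i$ by $\sum_i$ gives $\max_i d_{C_i}(\bar{x})^2 \le 2m \, d_D(\vx)^2 + 2m \, d_{C}(\vx)^2 \le 4m \max\{d_D(\vx)^2, d_{C}(\vx)^2\}$. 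Putting the pieces together,
\begin{align*}
d_{C \cap D}(\vx)^2 \le d_D(\vx)^2 + 4 m \mu_\rho^2 \max\{d_D(\vx)^2, d_C(\vx)^2\} \le (1 + 4m\mu_\rho^2)\max\{d_C(\vx)^2, d_D(\vx)^2\},
\end{align*}
which upon taking square roots delivers the claimed constant $\sqrt{1+4m\mu_\rho^2}$.

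For the reverse direction, suppose $\{C,D\}$ is $\nu_\rho$-boundedly linearly regular on $B(\vzero,\rho)$. Given $x \in B(0,\rho) \subseteq \cH$, set $\vx := (x,\ldots,x) \in D$, which lies in $B(\vzero,\rho)$ in the canonical norm. Then $d_D(\vx) = 0$, $d_C(\vx)^2 = (1/m)\sum_i d_{C_i}(x)^2 \le \max_i d_{C_i}(x)^2$, and $d_{C \cap D}(\vx) = d_{C_1 \cap \cdots \cap C_m}(x)$. The hypothesis then gives $d_{C_1 \cap \cdots \cap C_m}(x) \le \nu_\rho \max_i d_{C_i}(x)$, establishing bounded linear regularity of $\{C_1,\ldots,C_m\}$ with the same constant. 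The (unbounded) linearly regular case follows by observing that neither direction of the argument introduces any dependence on $\rho$ beyond what already appears in the underlying regularity constants, so taking $\mu_\rho \equiv \mu_\infty < \infty$ yields $\sqrt{1+4m\mu_\infty^2}$-linear regularity for the pair, and conversely.

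The only real obstacle is bookkeeping to obtain the specific constant $\sqrt{1+4m\mu_\rho^2}$ rather than a loose universal bound; the Pythagorean splitting of $d_{C\cap D}(\vx)^2$ into $d_D(\vx)^2$ plus $d_{\cap_i C_i}(\bar{x})^2$ is what keeps the ``$1$'' in $1+4m\mu_\rho^2$ from being replaced by a larger constant, and using Jensen's inequality to certify $\bar{x} \in B(0,\rho)$ is what allows the regularity hypothesis to be applied at $\bar{x}$.
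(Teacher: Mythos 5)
Your proof is correct, and it is worth noting that the paper itself does not prove this theorem at all: its entire ``proof'' is a citation to \cite[Theorem 3.12]{deutsch2008rate}, so your argument supplies a self-contained derivation where the paper outsources the work. Your two key steps check out: the Pythagorean identity $(1/m)\sum_i\|\vx_i-y\|^2=(1/m)\sum_i\|\vx_i-\bar x\|^2+\|\bar x-y\|^2$ (cross terms vanish since $\sum_i(\vx_i-\bar x)=0$) cleanly splits $d_{C\cap D}(\vx)^2\le d_D(\vx)^2+d_{C_1\cap\cdots\cap C_m}(\bar x)^2$, and Jensen's inequality certifies $\bar x\in B(0,\rho)$ so that the regularity hypothesis may be invoked at $\bar x$; the chain $\max_i d_{C_i}(\bar x)^2\le 2m\,d_D(\vx)^2+2m\,d_C(\vx)^2\le 4m\max\{\cdot\}$ then recovers exactly the constant $\sqrt{1+4m\mu_\rho^2}$ in the statement, and the reverse direction via embedding $x\mapsto(x,\ldots,x)\in D$ is immediate. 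The one thing your write-up buys beyond the citation is transparency about where each factor in $1+4m\mu_\rho^2$ comes from (the ``$1$'' from the orthogonal diagonal component, the $4m$ from the triangle inequality plus bounding a max by a sum), which makes clear the constant is an artifact of these crude bounds rather than sharp.
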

\begin{proof}
See \cite[Theorem 3.12]{deutsch2008rate}.
\qed\end{proof}

In this section we model the feasibility problem of the $m$ sets $\{C_{1}, \cdots, C_m\}$ using the following two objective functions on the product space $\cH^m$:
\begin{align*}
f(x_1, \cdots, x_m) = \sum_{i=1}^m d_{C_i}^2(x_i) && \mathrm{and} &&  g(x_1, \cdots, x_n) = d_{D}^2(x_1, \cdots, x_n).
\end{align*}
In the space $\cH^m$, the proximal operators of $f$ and $g$ have the following form:
\begin{align*}
\prox_{\gamma f} (\vx) = \left(\frac{1}{2\gamma + 1} x_i  + \frac{2\gamma}{2\gamma + 1} P_{C_j}x_j\right)_{j = 1}^m && \mathrm{and} && \prox_{\gamma g}(\vx)  = \left(\frac{1}{2\gamma + 1}x_j + \frac{2\gamma }{(2\gamma + 1)m}\sum_{i=1}^m x_i\right)_{j=1}^m.
\end{align*}
We apply the iteration in Equation~\eqref{eq:DRSfeasibilitygamma} with these identities to get the following parallel algorithm: given implicit stepsize parameters $(\gamma_{f, j})_{ j \geq 0}$ and $(\gamma_{g, j})_{j \geq 0}$, relaxation parameters $(\lambda_j)_{j \geq 0} \subseteq (0, 1]$, and an initial point $\vz^0 \in \cH^m$, for all $k \geq 0$, define
\begin{align*}
&\overline{z}^k = \frac{1}{m}\sum_{i=1}^m z_i^k;\\
&\begin{cases}
x_{g,i}^k = ({1}/({2\gamma_{g, k} + 1}))z^k_i+ ({2\gamma_{g, k}}/({2\gamma_{g, k} + 1}))\overline{z}^k; & \mbox{For}  \\
x_{f,i}^k = ({1}/({2\gamma_{f, k}+1}))(2x_{g, i}^k - z_i^k) + ({2\gamma_{f, k}}/({2\gamma_{f, k} + 1})) P_{C_i}(2x_{g, i}^k - z_i^k);  & \boxed{i = 1, \cdots, m} \\
z_{i}^{k+1} = z_i^k + 2\lambda_k (x_{f, i}^k - x_{g, i}^k); & \mbox{in parallel.}
\end{cases} \numberthis \label{eq:PPSdistance}
\end{align*}
Note that the algorithm in Equation~\eqref{eq:PPSdistance} is related to the general algorithm in \cite[Section 8.3]{bauschke1996projectionthesis}. One of the main differences between these two algorithms is that the projection operators are not necessarily evaluated at same point in each iteration ($(2\vx_g^k - \vz^k) \notin D$). By changing the metric of the underlying space, e.g. to $\|(x_1, \cdots, x_m)\|^2 = \sum_{i=1}^m w_i \|x_i\|^2$ where $w_i > 0$ are arbitrary weights, we can perform a weighted average of all the projections.  In addition, we can assign each set $C_i$ a different implicit stepsize parameter at each iteration. For simplicity we do not pursue these extensions here.

The following theorem deduces the linear convergence of the iteration in Equation~\eqref{eq:PPSdistance}.
\begin{theorem}[Linear convergence: Feasibility for multiple sets]\label{thm:PPSfeasibility}
Suppose that $(\vz^j)_{j \geq 0}$ is generated by the iteration in Equation~\eqref{eq:PPSdistance}, and suppose that $\{C_1, \cdots, C_m\}$ is (boundedly) linearly regular.  Let $\rho > 0$ and $\mu_\rho > 0$ be such that $(\vz^j)_{ j \geq 0} \subseteq B(\mathbf{0}, \rho)$ and the inequality
\begin{align}
d_{C_1 \cap \cdots \cap C_m}(x) &\leq \mu_\rho \max\{d_{C_1}(x),\cdots,  d_{C_m}(x)\}
\end{align}
holds for all $x \in B(0, \rho)$. Then $(\vz^j)_{j \geq 0}$ satisfies the following relation: for all $k \geq 0$,
\begin{align}
d_{(C_1\times \cdots \times C_m) \cap D}(\vz^{k+1}) &\leq C(\gamma_{f, k}, \gamma_{g, k}, \lambda_k, \mu_\rho)  \times d_{(C_1\times \cdots \times C_m) \cap D}(\vz^k)
\end{align}
where
\begin{align*}
C(\gamma_{f, k}, \gamma_{g, k}, \lambda_k, \mu_\rho) &:= \left(1-\frac{4\lambda_k\min\{{\gamma_{g, k}}/{(2\gamma_{g, k} + 1)^2}, {\gamma_{f, k}}/{(2\gamma_{f, k} + 1)^2}\}}{(1+4m\mu_\rho^2)\max\{{16\gamma_{g, k}^2}/{(2\gamma_{g, k}+1)^2}, 1\}} \right)^{{1}/{2}}.
\end{align*}
In particular, if $\overline{C} = \sup_{j \geq 0}C(\gamma_{f, k}, \gamma_{g, k}, \lambda_k, \mu_\rho)  < 1$, then $(\vz^j)_{j \geq 0}$ converges linearly to a point in $(C_1\times \cdots \times C_m) \cap D$ with rate $\overline{C}$, and
\begin{align}
\|\vz^k - \vz^\ast\| &\leq 2d_{(C_1\times \cdots \times C_m) \cap D}(\vz^0) \prod_{i=0}^k C(\gamma_{f, i}, \gamma_{g, i}, \lambda_i, \mu_\rho).
\end{align}
\end{theorem}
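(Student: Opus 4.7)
The plan is to reduce the multi-set problem to the two-set problem already handled in Theorem~\ref{thm:linearfeasibility} by lifting to the product space $\cH^m$, and then to inherit linear convergence from that result via the reduction of bounded linear regularity provided by Theorem~\ref{thm:boundedlinearlyregularproduct}.

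First I would verify that the iteration in Equation~\eqref{eq:PPSdistance} is identically the iteration in Equation~\eqref{eq:DRSfeasibilitygamma} applied to the two closed convex sets $\vC_f := C_1 \times \cdots \times C_m$ and $\vC_g := D$ living inside $\cH^m$, with $f(\vx) = d_{\vC_f}^2(\vx)$ and $g(\vx) = d_{\vC_g}^2(\vx)$. This is a direct computation using Proposition~\ref{prop:factsaboutdistancesquared}: the function $f$ is separable across coordinates, so $\prox_{\gamma f}$ acts componentwise as $\frac{1}{2\gamma+1} x_j + \frac{2\gamma}{2\gamma+1} P_{C_j}(x_j)$; the set $D$ is a closed linear subspace whose projection is $P_D(\vx) = (\overline{x},\ldots,\overline{x})$ with $\overline{x} = \tfrac{1}{m}\sum_i x_i$, and hence $\prox_{\gamma g}(\vx)_j = \frac{1}{2\gamma+1} x_j + \frac{2\gamma}{(2\gamma+1)m}\sum_i x_i$. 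Plugging these two proximal formulas into Equation~\eqref{eq:DRSfeasibilitygamma} reproduces Equation~\eqref{eq:PPSdistance} line-by-line.

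Next I would transfer the bounded linear regularity assumption: by hypothesis $\{C_1,\ldots,C_m\}$ is $\mu_\rho$-boundedly linearly regular on $B(0,\rho)\subseteq\cH$, so Theorem~\ref{thm:boundedlinearlyregularproduct} guarantees that the pair $\{\vC_f,\vC_g\}$ is $\sqrt{1+4m\mu_\rho^2}$-boundedly linearly regular on $B(\mathbf{0},\rho)\subseteq\cH^m$. The sequence $(\vz^j)_{j\ge 0}$ is assumed to lie in $B(\mathbf{0},\rho)$ (and Fact~\ref{fact:averagedconvergence} ensures this is preserved under the PRS iterate since $\|\vz^j-\vx\|$ is nonincreasing for any $\vx \in \vC_f\cap \vC_g$, so assuming $\vz^0\in B(\mathbf{0},\rho)$ suffices).

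Finally, I would invoke Theorem~\ref{thm:linearfeasibility} directly with the two-set constant $\sqrt{1+4m\mu_\rho^2}$ replacing $\mu_\rho$. Substituting this into the formula for $C(\gamma_{f,k},\gamma_{g,k},\lambda_k,\mu_\rho)$ of Theorem~\ref{thm:linearfeasibility} yields exactly the constant stated in Theorem~\ref{thm:PPSfeasibility}, with the quantity $c_1^2 = 16\gamma_{g,k}^2/(2\gamma_{g,k}+1)^2$ appearing in the denominator in the same way. The conclusion
\[
d_{(C_1\times\cdots\times C_m)\cap D}(\vz^{k+1}) \le C(\gamma_{f,k},\gamma_{g,k},\lambda_k,\mu_\rho)\,d_{(C_1\times\cdots\times C_m)\cap D}(\vz^k)
\]
then follows, and when $\overline{C}<1$ linear convergence to a point $\vz^\ast \in \vC_f\cap \vC_g$ together with the explicit bound on $\|\vz^k-\vz^\ast\|$ is a direct application of~\cite[Theorem 5.12]{bauschke2011convex}, exactly as in the two-set proof. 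The only real point of care is noting that $\vC_f\cap \vC_g = \{(x,\ldots,x) : x\in\bigcap_i C_i\}$, so linear convergence of $(\vz^j)$ to a diagonal point is equivalent to linear convergence of each coordinate to a common point in the true intersection $\bigcap_i C_i$. There is no genuine obstacle beyond bookkeeping: the entire argument is a reduction, and the substantive analytic work was already carried out in Theorems~\ref{thm:linearfeasibility} and~\ref{thm:boundedlinearlyregularproduct}.
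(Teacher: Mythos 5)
Your proposal is correct and follows exactly the paper's route: the paper's proof is literally the one-line statement that the theorem is a direct corollary of Theorem~\ref{thm:linearfeasibility} applied in the product space, with Theorem~\ref{thm:boundedlinearlyregularproduct} supplying the regularity constant $\sqrt{1+4m\mu_\rho^2}$. Your write-up simply makes explicit the bookkeeping (componentwise proximal formulas, identification of the iteration, the diagonal structure of the intersection) that the paper leaves implicit.
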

\begin{proof}
This theorem is a direct corollary of Theorem~\ref{thm:linearfeasibility} except that Theorem~\ref{thm:boundedlinearlyregularproduct} is used to calculate the (bounded) linear regularity constant.
\qed\end{proof}

Finally we derive the following analogue of Corollary~\ref{cor:AP}.
\begin{corollary}[Convergence of MAP: Multiple sets]\label{cor:APm}
Let $(\vz^j)_{j \geq 0}$ be generated by the iteration in Equation~\eqref{eq:PPSdistance} with $\gamma_{f, k} \equiv \gamma_{g, k} \equiv {1}/{2}$ and $\lambda_k \equiv1$.  Define $x^k :=(P_D\vz^k)_1$. Then for all $k \geq 0$,
\begin{align}\label{eq:AAP}
x^{k+1} = \frac{1}{m}\sum_{i=1}^m P_{C_i}(x^k).
\end{align}
Thus, Averaged MAP is a special case of  PRS.   Consequently, under the assumptions of Theorem~\ref{thm:PPSfeasibility},  $x^k$ converges linearly to a point in the intersection $C_{1} \cap \cdots \cap C_m$ with rate $\left(1 - {1}/({1+4m\mu^2})\right)^{{1}/{2}}$.
\end{corollary}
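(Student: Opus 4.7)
The plan has two steps: a direct calculation showing that iteration~\eqref{eq:PPSdistance} collapses to the Averaged MAP update~\eqref{eq:AAP} under the prescribed parameters, followed by a sharpening of Theorem~\ref{thm:PPSfeasibility} in the spirit of Corollary~\ref{cor:AP} to hit the claimed rate $\left(1 - 1/(1+4m\mu_\rho^2)\right)^{1/2}$.

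For the first step, I would plug $\gamma_{g,k} = 1/2$ into the formula for $x_{g,i}^k$, obtaining $x_{g,i}^k = \tfrac{1}{2} z_i^k + \tfrac{1}{2} \overline{z}^k$ and in particular $2 x_{g,i}^k - z_i^k = \overline{z}^k$ for every $i$; the reflected point therefore lies in $D$ and is identical across coordinates. Plugging $\gamma_{f,k} = 1/2$ then gives $x_{f,i}^k = \tfrac{1}{2}\overline{z}^k + \tfrac{1}{2} P_{C_i}(\overline{z}^k)$, and with $\lambda_k = 1$ the relaxation step telescopes to $z_i^{k+1} = z_i^k + 2(x_{f,i}^k - x_{g,i}^k) = P_{C_i}(\overline{z}^k)$. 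Since $(P_D \vz^k)_i = \overline{z}^k$ in the canonical product norm, setting $x^k := \overline{z}^k$ yields $x^{k+1} = \overline{z}^{k+1} = \tfrac{1}{m}\sum_{i=1}^m P_{C_i}(x^k)$, which is precisely~\eqref{eq:AAP}.

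For the second step the key observation is that from $k = 1$ onward each coordinate satisfies $z_i^k = P_{C_i}(\overline{z}^{k-1}) \in C_i$, so $\vz^k \in C_1 \times \cdots \times C_m$ and hence $d_{C_1\times\cdots\times C_m}(\vz^k) = 0$. I would then replay the proof of Theorem~\ref{thm:linearfeasibility} with $f = d^2_{C_1\times\cdots\times C_m}$, $g = d^2_D$, $\gamma_f = \gamma_g = 1/2$, $\lambda = 1$, but skip the bound~\eqref{thm:linearfeasibility:eq:upperboundztoc1} (vacuous when $d_{C_f}(\vz^k) = 0$); this effectively sets $c_1 = 0$ and removes the $1/(2\max\{c_1^2,1\})$ prefactor in~\eqref{thm:linearfeasibility:eq:lower}. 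The upper fundamental inequality of Proposition~\ref{eq:DRSupperfeas} then reads $d^2_D(\vz^k) + \|\vz^{k+1} - \vx\|^2 \leq \|\vz^k - \vx\|^2$ for every $\vx \in (C_1 \times \cdots \times C_m) \cap D$, where the $1/8$ from the min and the $8\lambda$ prefactor combine to give a clean coefficient $1$ in front of $d^2_D(\vz^k)$. Theorem~\ref{thm:boundedlinearlyregularproduct} transfers the $\mu_\rho$-bounded linear regularity of $\{C_1,\dots,C_m\}$ to a $\sqrt{1+4m\mu_\rho^2}$-bound for $\{C_1\times\cdots\times C_m, D\}$, and together with $d_{C_1\times\cdots\times C_m}(\vz^k) = 0$ this gives $d^2_{(C_1\times\cdots\times C_m)\cap D}(\vz^k) \leq (1+4m\mu_\rho^2)\, d^2_D(\vz^k)$. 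Choosing $\vx = P_{(C_1\times\cdots\times C_m)\cap D}(\vz^k)$ in the previous display and rearranging yields the contraction $d_{(C_1\times\cdots\times C_m)\cap D}(\vz^{k+1}) \leq \left(1 - 1/(1+4m\mu_\rho^2)\right)^{1/2} d_{(C_1\times\cdots\times C_m)\cap D}(\vz^k)$.

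Linear convergence of $x^k = \overline{z}^k$ to a point in $C_1 \cap \cdots \cap C_m$ then follows because $\vz^k$ converges linearly to some $(x^\ast,\ldots,x^\ast)$ with $x^\ast \in C_1 \cap \cdots \cap C_m$, and the canonical product norm on $\cH^m$ yields $\|x^k - x^\ast\| \leq \|\vz^k - (x^\ast,\ldots,x^\ast)\|$ by Cauchy--Schwarz. The main obstacle is pure bookkeeping: Theorem~\ref{thm:PPSfeasibility} as stated only delivers the weaker rate $(1 - 1/(2(1+4m\mu_\rho^2)))^{1/2}$, and one must isolate the $c_1 = 0$ improvement at exactly the right place in the proof of Theorem~\ref{thm:linearfeasibility} so that the saved factor of $2$ cancels against the one already baked into the min in the numerator to produce the claimed constant.
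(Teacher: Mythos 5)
Your proposal is correct and follows essentially the same route as the paper: it establishes the averaged-MAP identity~\eqref{eq:AAP} by specializing the proximal/reflection formulas at $\gamma=1/2$ (so that $\refl_{\gamma g}=P_D$ and $\refl_{\gamma f}=P_{C_1\times\cdots\times C_m}$), and then obtains the sharpened rate by combining the product-space regularity constant of Theorem~\ref{thm:boundedlinearlyregularproduct} with the $c_1=0$ improvement that comes from $\vz^k\in C_1\times\cdots\times C_m$ for $k\geq 1$. The only cosmetic difference is that the paper reaches the constant by citing Corollary~\ref{cor:AP} directly in the product space, whereas you unroll that corollary's $c_1=0$ argument explicitly inside the proof of Theorem~\ref{thm:linearfeasibility}; the arithmetic in both cases yields the same $\left(1-1/(1+4m\mu_\rho^2)\right)^{1/2}$.
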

\begin{proof}
Equation~\eqref{eq:AAP} follows because $\refl_{\gamma g} = P_D$ and $\refl_{\gamma f} = P_{C_1\times \cdots \times C_m}$.  In addition,  by the nonexpansiveness of $P_{D}$ we have
\begin{align*}
\|x^k - z^\ast\|^2 = \|(P_D\vz^k)_{1} - \vz_1^\ast\|^2 = \frac{1}{m}\sum_{i=1}^k \|(P_D\vz^k)_i - \vz_i^\ast\|^2 \leq \|\vz^k - \vz^\ast\|^2.
\end{align*}
By Corollary~\ref{cor:AP} and Theorem~\ref{thm:boundedlinearlyregularproduct}, the sequence $(\vz^j)_{j \geq 0}$ converges linearly with rate $\left(1 - {1}/({1+4m\mu^2})\right)^{{1}/{2}}$. Thus, the rate for $(x^k)_{k \geq 0}$ follows from the rate for $(\vz^j)_{j \geq 0}$.
\qed\end{proof}

\section{Consequences of linear convergence of ADMM}\label{app:linearconvergenceimplies}

The following proposition is a translation of Proposition~\ref{prop:linearconvergenceimplies} to the ADMM setting.
\begin{proposition}[Consequences of linear convergence of ADMM]\label{prop:linearconvergenceimpliesADMM}
Let $(C_j)_{j \geq0} \subseteq [0, 1]$ be a positive scalar sequence, and suppose that for all $k \geq0$,
\begin{align}\label{ADMM:prop:linearconvergenceimplies:eq:main}
\|z^{k+1} - z^\ast\| \leq C_k\|z^k - z^\ast\|.
\end{align}
Fix $k \geq 1$. Then
\begin{align*}
 \|w_{d_g}^k - w^\ast\|^2 + \gamma^2\| By^k - By^\ast\|^2 \leq \|z^0 - z^\ast\|^2\prod_{i = 0}^{k-1} C_i^2; \\
 \|w_{d_f}^k - w^\ast\|^2 + \gamma^2\|Ax^k - Ax^\ast\|^2 \leq \|z^0 - z^\ast\|^2\prod_{i = 0}^{k-1} C_i^2.
 \end{align*}
If $\lambda <1$, then the FPR rate holds: $$\|(\TPRS)_{\lambda}z^k- z^k\| \leq \sqrt{\frac{\lambda}{1-\lambda}}\|z^0 - z^\ast\|\prod_{i = 0}^{k-1} C_i.$$ Consequently, the following convergence rates for constraint violations and objective errors hold:$$\|Ax^k + By^k - b\|^2 \leq \frac{\|z^0 - z^\ast\|^2}{\gamma^2}\prod_{i = 0}^{k-1} C_i^2,$$  and
\begin{align*}
\frac{-\|z^0 - z^\ast\|\|w^\ast\|}{\gamma}\prod_{i = 0}^{k-1} C_i \leq f(x^k) + g(y^k) - f(x^\ast) - g(y^\ast) \leq   \frac{\left(\|z^0 - z^\ast\| + \|w^\ast\|\right)\|z^0 - z^\ast\|}{\gamma} \prod_{i = 0}^{k-1} C_i.
\end{align*}
\end{proposition}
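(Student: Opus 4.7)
The plan is to recognize that Proposition~\ref{prop:linearconvergenceimpliesADMM} is (mostly) a direct translation of Proposition~\ref{prop:linearconvergenceimplies} into the dual picture, supplemented by two genuinely ADMM-specific bounds (constraint violations and primal objective error). I will handle the four conclusions in that order.

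First, for the bounds on $\|w_{d_g}^k-w^\ast\|^2+\gamma^2\|By^k-By^\ast\|^2$ and $\|w_{d_f}^k-w^\ast\|^2+\gamma^2\|Ax^k-Ax^\ast\|^2$, I apply Proposition~\ref{prop:linearconvergenceimplies} to the dual problem in Equation~\eqref{eq:dualproblem2}. The auxiliary sequences $(w_{d_g}^j)_{j\geq 0}$ and $(w_{d_f}^j)_{j\geq 0}$ play the roles of $(x_g^j)$ and $(x_f^j)$, and, by Table~\ref{table:ADMMsubgradients}, $\tnabla d_g(w_{d_g}^k)-\tnabla d_g(w^\ast)=By^k-By^\ast$ and $\tnabla d_f(w_{d_f}^k)-\tnabla d_f(w^\ast)=Ax^k-Ax^\ast$. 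Substituting these subgradient identities directly into Proposition~\ref{prop:linearconvergenceimplies} and iterating~\eqref{ADMM:prop:linearconvergenceimplies:eq:main} yields the claim. The FPR bound under $\lambda<1$ likewise is obtained verbatim from Proposition~\ref{prop:linearconvergenceimplies}, since the statement concerns the operator $(\TPRS)_\lambda$ applied to the dual iterate $z^k$ and only uses the Fej\'er-type inequality~\eqref{eq:fejer} together with~\eqref{ADMM:prop:linearconvergenceimplies:eq:main}.

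Next, for the constraint violation bound, I use the ADMM-specific identity $\TPRS z^k-z^k=2(w_{d_f}^k-w_{d_g}^k)$, which combined with the last line of Proposition~\ref{prop:relaxedADMM} gives $Ax^k+By^k-b=-\tfrac{1}{2\gamma}(\TPRS z^k-z^k)$. Then, by nonexpansiveness of $\TPRS$ (Part~\ref{prop:basicprox:part:nonexpansive} of Proposition~\ref{prop:basicprox}) and the triangle inequality,
\[
\|\TPRS z^k-z^k\|\leq \|\TPRS z^k-z^\ast\|+\|z^\ast-z^k\|\leq 2\|z^k-z^\ast\|,
\]
so $\|Ax^k+By^k-b\|\leq \tfrac{1}{\gamma}\|z^k-z^\ast\|$, and iterating~\eqref{ADMM:prop:linearconvergenceimplies:eq:main} finishes this part.

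Finally, for the objective errors, the lower bound is the easy direction: Proposition~\ref{prop:ADMMlower} gives $f(x^k)+g(y^k)-f(x^\ast)-g(y^\ast)\geq \dotp{Ax^k+By^k-b,w^\ast}\geq -\|Ax^k+By^k-b\|\|w^\ast\|$, and we plug in the bound just established. For the upper bound, I start from Proposition~\ref{prop:ADMMupper}, drop the nonpositive $(1-1/\lambda_k)\|z^{k+1}-z^k\|^2$ term (since $\lambda_k\in(0,1]$), and then apply the elementary identity $\|a\|^2-\|b\|^2\leq (\|a\|+\|b\|)\|a-b\|$ with $a=z^k-(z^\ast-w^\ast)$ and $b=z^{k+1}-(z^\ast-w^\ast)$. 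Each of $\|z^k-(z^\ast-w^\ast)\|$ and $\|z^{k+1}-(z^\ast-w^\ast)\|$ is bounded by $\|z^0-z^\ast\|+\|w^\ast\|$ via Part~\ref{fact:averagedconvergence:eq:mono} of Fact~\ref{fact:averagedconvergence}, and $\|z^k-z^{k+1}\|=\lambda_k\|\TPRS z^k-z^k\|\leq 2\lambda_k\|z^k-z^\ast\|$ from the earlier nonexpansiveness argument; iterating~\eqref{ADMM:prop:linearconvergenceimplies:eq:main} once more closes the argument. The only mildly delicate point is the bookkeeping in this last step: the $\lambda_k$ factors on the two sides of the ADMM upper fundamental inequality must match up so that the final bound depends on $\prod_{i=0}^{k-1}C_i$ rather than on the relaxation parameters, and this is what forces us to bound $\|z^k-z^{k+1}\|$ by a multiple of $\lambda_k\|z^k-z^\ast\|$ rather than by the FPR estimate.
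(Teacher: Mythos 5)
Your proof is correct and reaches all of the stated constants. The first half (the bounds on the dual and primal variables and the FPR) is exactly the paper's argument: cite Proposition~\ref{prop:linearconvergenceimplies} together with the subgradient identities of Table~\ref{table:ADMMsubgradients}. Where you diverge is in the last two estimates. The paper treats $z_\lambda=(\TPRS)_\lambda z^k$ as a \emph{virtual} step with a free parameter $\lambda\in[0,1]$: it writes $Ax^k+By^k-b=-(z_\lambda-z^k)/(2\gamma\lambda)$, invokes the already-established FPR rate, and optimizes over $\lambda$ (the infimum sits at $\lambda=1/2$); for the objective upper bound it applies the cosine rule~\eqref{eq:cosinerule} to the right-hand side of Proposition~\ref{prop:ADMMupper} and again evaluates at $\lambda=1/2$. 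You instead stay with the actual iterate $z^{k+1}$ and the actual $\lambda_k$: you bound $\|\TPRS z^k-z^k\|\le\|\TPRS z^k-z^\ast\|+\|z^\ast-z^k\|\le 2\|z^k-z^\ast\|$ by nonexpansiveness of $\TPRS$ at the fixed point, and you replace the cosine-rule step with the elementary factorization $\|a\|^2-\|b\|^2\le(\|a\|+\|b\|)\,\|a-b\|$ combined with the monotonicity bound from Part~\ref{fact:averagedconvergence:eq:mono} of Fact~\ref{fact:averagedconvergence}. The $\lambda_k$ factors cancel exactly as you observe, and the resulting constants coincide with the paper's. Your route is marginally more elementary (no optimization over a virtual relaxation parameter and no appeal to the FPR estimate in these two steps), while the paper's has the advantage of reusing verbatim the template of Proposition~\ref{prop:linearconvergenceimplies}; neither is tighter than the other.
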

\begin{proof}
The convergence rates for the dual variables, primal variables, and  FPR follow from Proposition~\ref{prop:linearconvergenceimplies} and the identities in Table~\ref{table:ADMMsubgradients}.

Now fix $k \geq 1$, and let $z_\lambda = (\TPRS)_{\lambda}z^k$ for all $\lambda \in [0,1]$. The convergence rate for the constraint violation follows from the identity $z_\lambda - z^k = -2\gamma \lambda( Ax^{k} + By^k - b)$ (Equation~\eqref{eq:ADMMfeasibilityFPR}) and the FPR convergence rate:
\begin{align*}
\|Ax^k + By^k - b\|^2 = \inf_{\lambda \in [0, 1]} \frac{\|z_\lambda - z^k\|^2}{4\gamma^2\lambda^2}= \inf_{\lambda \in [0, 1]}\frac{\|z^0 - z^\ast\|^2}{4\gamma^2\lambda(1-\lambda)}\prod_{i = 0}^{k-1} C_i^2 =  \frac{\|z^0 - z^\ast\|^2}{\gamma^2}\prod_{i = 0}^{k-1} C_i^2
\end{align*}
The lower bound on the objective error follows from the fundamental lower inequality in Proposition~\ref{prop:ADMMlower} and the constraint violations rate:
\begin{align*}
f(x^k) + g(y^k) - f(x^\ast) - g(y^\ast)\stackrel{\eqref{prop:ADMMlower:eq:main}}{\geq} \dotp{Ax^k + By^k - b, w^\ast} \geq -\|Ax^k + By^k - b\|\| w^\ast\| \geq \frac{-\|z^0 - z^\ast\|\|w^\ast\|}{\gamma}\prod_{i = 0}^{k-1} C_i
\end{align*}
The upper bound on the objective error follows from Proposition~\ref{prop:ADMMupper}, the FPR rate, the bound $\|z_{\lambda} - z^\ast\|^2 \leq \|z^k - z^\ast\|$ (Equation~\eqref{eq:fejer}), the monotonicity of the sequence $(\|z^{j} - z^\ast\|)_{j \geq 0}$ (Part~\ref{fact:averagedconvergence:eq:mono} of Fact~\ref{fact:averagedconvergence}), and the following inequalities:
\begin{align*}
 f(x^k) + g(y^k) &- f(x^\ast) - g(y^\ast)\\
&\stackrel{\eqref{prop:ADMMupper:eq:main}}{\leq} \inf_{\lambda \in [0, 1]} \frac{1}{4\gamma\lambda}\left(\|z^k - (z^\ast - w^\ast)\|^2 - \|z_{\lambda} - (z^\ast - w^\ast)\|^2  + \left(1- \frac{1}{\lambda} \right) \|z^{k} - z_\lambda\|^2\right) \\\
&\stackrel{\eqref{eq:cosinerule}}{\leq}\inf_{\lambda \in [0, 1]}  \frac{1}{4\gamma\lambda}\left(2\dotp{ z_\lambda - (z^\ast - w^\ast), z^k - z_\lambda} + 2\left(1- \frac{1}{2\lambda} \right) \|z^{k} - z_\lambda \|^2 \right) \\
&\leq \frac{\left(\|z_{1/2} - z^\ast\| + \|w^\ast\|\right)\|z_{1/2} - z^k\|}{\gamma}  \\
&\leq \frac{\left(\|z^0 - z^\ast\| + \|w^\ast\|\right)\|z^0 - z^\ast\|}{\gamma } \prod_{i = 0}^{k-1} C_i.
\end{align*}
\qed\end{proof}

\section{Applications to conic programming}\label{app:conicprogramming}

In this section we borrow the setting of \cite{o2013operator}. The goal of linear (LP) and semidefinite (SDP) programming is to minimize a linear function subject to linear and matrix semidefinite constraints, respectively.  Thus, in this section we study the following generic primal-dual pair problem
\begin{align*}
\Min_{x \in \vR^n} &\; c^T x && & \Max_{y \in \vR^m} &\;  -b^Ty \\
\mbox{subject to} &\; Ax + s = b && &\mbox{subject to} &\;  - A^T y + r = c  \\
&\; (x, s) \in \vR^n \times \cK && &&  (r, y) \in \{0\}^n \times \cK^\ast \numberthis\label{eq:conicprogramming}
\end{align*}
where $c \in \vR^n$, $b, s \in \vR^m$,  $A : \vR^n \rightarrow \vR^m$ is a linear map, $\cK \subseteq \vR^m$ is a closed convex cone, and $\cK^\ast \subseteq \vR^m$ is the dual cone to $\cK$. In linear programming $\cK$, is the positive orthant $\cK = \vR^n_+$, and for semidefinite programming, $\cK$ is the cone of symmetric, positive semidefinite matrices.

In \cite{o2013operator}, both optimization problems in Equation~\eqref{eq:conicprogramming} are combined into a single feasibility problem. To this end we introduce slack variables $\tau, \kappa \in \vR_+$, and the vectors and matrix
\begin{align*}
u =\begin{bmatrix} x \\ y \\ \tau \end{bmatrix} \in \vR^{n + m + 1}, \quad  v = \begin{bmatrix} r \\ s \\ \kappa\end{bmatrix} \in \vR^{n + m + 1}, \quad Q = \begin{bmatrix} 0 & A^T & c \\ - A & 0 & b \\ -c^T &- b^T  &0 \end{bmatrix} \in \vR^{(n + m + 1)\times (n+ m + 1)}.
\end{align*}
In addition, we let $\cC = \vR^n \times \cK^\ast \times \vR_+$ and $\cC^\ast = \{0\} \times \cK \times \vR_+$.  With this notation the goal of the \emph{homogeneous self dual embedding} problem is to find $(u, v) \in \vR^{n+m + 1}$ such that $Qu = v$ and $(u, v) \in \cC \times \cC^\ast$.

Throughout this section we denote
\begin{align}
C_f = \cC \times \cC^\ast && \mathrm{and} &&  C_g = \{(u, v) \in \vR^{n + m + 1} \times \vR^{n+m+1} \mid Qu = v\}.
\end{align}
Our goal is to find a point in the intersection $C_f \cap C_g$. A remarkable trichotomy was derived in \cite{ye1994nl}: Suppose $(u, v) \in C_f \cap C_g$, then
\begin{enumerate}
\item If $\tau > 0$ and $\kappa = 0$, then $(x/\tau, y/\tau, s/ \tau)$ is a primal dual solution of \label{eq:conicprogramming}.
\item If $\tau = 0$ and $\kappa > 0$, then $c^T x + b^Ty < 0$. The case $b^T y < 0$ is a certificate of primal infeasibility, and the case $c^Tx < 0$ is a certificate of dual in feasibility.
\item If $\tau = \kappa = 0$, then nothing can be concluded about Equation~\eqref{eq:conicprogramming}. However, if there exists a point $(u', v') \in C_f \cap C_g$ for which $\tau' + \kappa' \neq 0$, then we can choose an initial point $z^0 \in \vR^{n + m + 1}$ such that DRS applied with $f= \iota_{C_f}$ and $g = \iota_{C_g}$ converges to a point $(u', v') \in C_f\cap C_g$ with $\kappa' + \tau' \neq 0$~\cite{o2013operator}.
\end{enumerate}

\subsection{Linear programming}\label{sec:linearprogramming}
Let us now examine the structure of the sets $C_f$ and $C_g$.  For linear programming problems, $C_f = \vR^n \times \vR_+^m \times \vR_+ \times \{0\} \times \vR_+^m \times \vR_+$ is a polyhedron, i.e. the intersection of finitely many half planes, and $C_g$ is a linear subspace. In finite dimensional spaces the pair $\{C_f, C_g\}$ is linearly regular in the sense of Definition~\ref{defi:linearregularity} \cite[Remark 5.7.3]{bauschke1996projectionthesis}.

We have four different algorithms that we can apply to find a point in $C_f \cap C_g$. The first two are the non parallelized versions of DRS which correspond to function pairs
\begin{align}\label{eq:linearprogpairnoparallel}
(f = \iota_{C_f}, g = \iota_{C_g}) && \mathrm{and} &&  (f = d_{C_f}^2, g = d_{C_g}^2).
\end{align}
Theorem~\ref{thm:linearfeasibility} shows that relaxed PRS applied to the second pair (Equation~\eqref{eq:DRSfeasibilitygamma}) linearly convergence to a point in the intersection $C_f \cap C_g$.  Linear convergence of DRS applied to the first pair was shown in \cite{bauschke2014linear}.

The projection onto $C_f$ is simple, and so the main computational bottleneck of the algorithm is to project onto $C_g$.  There are various tricks that can be employed to speed this step up \cite{o2013operator}, but in some cases it is desirable to break up the linear equations into several sets $C_g = C_{g_1} \cap \cdots \cap C_{g_r}$ where $C_{g_i} \subseteq \vR^{n + m + 1}$ each encode a small number of linear constraints.

The collection $\{C_f, C_{g_1}, \cdots, C_{g_r}\}$ is linearly regular by \cite[Remark 5.7.3]{bauschke1996projectionthesis}, so we can apply Theorem~\ref{thm:boundedlinearlyregularproduct} to show that
$\{C_f \times C_{g_1} \times \cdots \times C_{g_r}, D\}$ is linearly regular where $D \subseteq \vR^{(r+1)(n + m + 1)}$ is the ``diagonal set" of Appendix~\ref{app:feasibilitymultiplesets}.  Thus, we can apply DRS or relaxed PRS to either of the following pairs:
\begin{align}\label{eq:linearprogpairparallel}
(f = \iota_{C_f\times C_{g_1} \times \cdots \times C_{g_r}}, g = \iota_{D}) && \mathrm{and} &&  (f = d_{C_f\times C_{g_1} \times \cdots \times C_{g_r}}^2, g = d_{D}^2).
\end{align}
We can deduce linear convergence of the first pair using  \cite{bauschke2014linear} and of the second by Theorem~\ref{thm:PPSfeasibility}.

In general, the pairs in Equation~\eqref{eq:linearprogpairnoparallel} and~\eqref{eq:linearprogpairparallel} may not perform the same in practice.  Thus, we cannot make any prediction about the practical performances of the methods. We can only point to our arguments in Section~\ref{sec:feasibilitygeneralconvergence} that seem to indicate a better performance of the indicator function pair in problems that are badly conditioned.

\subsection{Semidefinite programming}
For semidefinite programming, $\cK$ is the cone of positive semidefinite matrices. Note that $\cK^\ast = \cK$, i.e. $\cK$ is self dual \cite[Example 6.25]{bauschke2011convex}.  In general, the pair $\{C_f, C_g\}$ is not necessarily (boundedly) linearly regular. The main condition to check is whether the relative interior of $C_f$ intersects the subspace $C_g$ \cite[Theorem 5.6.2]{bauschke1996projectionthesis}. In fact, the relative interior of $\cK$ in $\vR^{m}$ is the set of all strictly positive definite matrices, i.e. the set of full rank positive definite matrices. Many problems of interest in semidefinite programming arise from the \emph{lifting} of a non convex problem and desire \emph{low rank} solutions of the associated SDP \cite{goemans1995improved}. Thus, we do not expect the relative interior of $C_f$ to intersect $C_g$ for every SDP.

In terms of algorithm choice, we have at least four options to model the feasibility problem (See Equations~\eqref{eq:linearprogpairnoparallel} and~\eqref{eq:linearprogpairparallel}). In particular, when the linear constraints are difficult to solve in unison, we can break them into smaller pieces and solve them exactly. However, the main computational bottleneck of semidefinite programming is the projection onto the semidefinite cone.  Unfortunately, there seems to be no way to lighten the cost of this projection.

\cut{The convergence rates for relaxed PRS applied to the feasibility problem are linear whenever the relative interior of $C_f$ intersects $C_g$.  In terms of $\cK$ this condition requires that there is a full rank strictly positive definite primal dual pair $(x, y) \in \cK \times \cK$.  Finally, because we usually do not expect full rank solutions to SDPs,}We refer the reader to Section~\ref{sec:feasibilitygeneralconvergence} and Equations~\eqref{eq:feasibilitybounddistancenonergodic} and~\eqref{eq:drsergodicdistancebound} which show the worst case feasibility convergence rates.

\end{document}